\newtheorem{theorem} {{\textsf{Theorem}}}
\newtheorem{proposition}[theorem]{{\textsf{Proposition}}}
\newtheorem{remark}[theorem]{{\textsf{Remark}}}
\newtheorem{example}[theorem]{{\textsf{Example}}}
\newtheorem{lemma}[theorem]{{\textsf{Lemma}}}
\newtheorem{conjecture}[theorem]{{\textsf{Conjecture}}}
\begin{document}
\title{Regular genus and gem-complexity of some mapping tori}
\author{Biplab Basak}
\date{}
\maketitle
\vspace{-10mm}
\begin{center}

\noindent {\small Department of Mathematics, Indian Institute of Technology Delhi, Hauz Khaz, New Delhi 110016, India.}

\noindent {\small {\em E-mail address:} \url{biplab8654@gmail.com}}

\medskip

\date{January 25, 2019}
\end{center}
\hrule

\begin{abstract}
In this article, we construct a crystallization of the mapping torus of some (PL) homeomorphisms $f:M \to M$ for a certain class of PL-manifolds $M$. These yield upper bounds for gem-complexity and regular genus of a large class of PL-manifolds. The bound for the regular genus is sharp for the mapping torus of some (PL) homeomorphisms $f:M \to M$, where $M$ is $\mathbb{RP}^2$, $\mathbb{RP}^2\#\mathbb{RP}^2$, $\mathbb{S}^1\times \mathbb{S}^1$, $\mathbb{RP}^3$, $\mathbb{S}^{2} \times \mathbb{S}^1$, $\mathbb{S}^{\hspace{.2mm}2} \mbox{$\times
\hspace{-2.6mm}_{-}$} \, \mathbb{S}^{\hspace{.1mm}1}$ or $\mathbb{S}^d$. In particular, for $M=\mathbb{S}^{d-1} \times \mathbb{S}^1$ or $\mathbb{S}^{\hspace{.2mm}d-1} \mbox{$\times\hspace{-2.6mm}_{-}$} \, \mathbb{S}^{\hspace{.1mm}1}$, our construction gives a crystallization of a mapping torus of a (PL) homeomorphism $f:M \to M$ with regular genus $d^2-d$. As a consequence, we prove the existence of an orientable mapping torus of a (PL) homeomorphism $f:(\mathbb{S}^{2} \times \mathbb{S}^1)\to (\mathbb{S}^{2} \times \mathbb{S}^1)$  with regular genus 6. This disproves a conjecture of Spaggiari which states that regular genus six characterizes the topological product $\mathbb{RP}^3 \times \mathbb{S}^1$ among closed connected prime orientable PL $4$-manifolds.
\end{abstract}

\noindent {\small {\em MSC 2010\,:} Primary 57Q15. Secondary 05C15; 57N10; 57N13; 57N15; 57Q05; 55R10.

\noindent {\em Keywords:} PL-manifolds; Mapping torus; Crystallizations; Regular genus; Gem-complexity.}

\medskip

\hrule

\section{Introduction and Results}
First, let us recall that, a {\em fiber bundle} is a topological space which is locally a product space. Let $M$ be a closed connected manifold. For a homeomorphism $f:M \to M$, let  $M_f=\frac{M \times [0,1]}{(x,0)\sim (f(x),1)}$. Then $M_f$ is an $M$-fiber bundle over $\mathbb{S}^1$. The bundle $M_f$ is called {\em the mapping torus of the homeomorphism $f:M\to M$}. In this article, we have mainly focused on PL $d$-manifolds $M$ and constructed mapping tori of some (PL) homeomorphisms $f:M\to M$.

A crystallization of a PL $d$-manifold is a contracted colored graph which represents the manifold (for details and related notations see Subsection \ref{crystal}).
The existence of crystallizations for every closed connected PL $d$-manifold is ensured by a classical theorem due to Pezzana (see \cite{pe74}, and \cite{bcg01,fgg86} for subsequent generalizations).  In dimension three there is a different version of crystallization for compact connected 3-manifolds without spherical boundary components (cf. \cite{crm16}). Moreover, interesting connections between crystallization theory and random tensor models (as a possible approach to study of quantum gravity) have been found in \cite{ccdg18}. Further, quite recently, this theory has been extended to PL pseudo-manifolds (including PL $d$-manifolds) of arbitrary dimensions (cf. \cite{ccg18,cg18}).

Given a PL $d$-manifold $M$, its {\em gem-complexity} is the non-negative integer $\mathit{k}(M):=p-1$, where $2p$ is the minimum order of a crystallization of $M$. 
Let $(\Gamma, \gamma)$ be a $(d+1)$-colored graph (i.e., at each vertex of the graph, there are exactly  $d+1$ edges with different colors from the color set $\Delta_d:=\{0, \dots, d\}$). An embedding $i : \Gamma \hookrightarrow F$ of $\Gamma$ into a closed surface $F$ is called {\it regular} if there exists a cyclic permutation $\varepsilon=(\varepsilon_0, \varepsilon_1, \dots, \varepsilon_d)$ of the color set $\Delta_d$, such that the boundary of each face of $i(\Gamma)$ is a bi-colored cycle with edges alternately colored by $\varepsilon_j, \varepsilon_{j+1}$ for each $j \in \Delta_d$ (the addition is modulo $d+1$). Then,
the {\it regular genus} $\rho (\Gamma)$ of $(\Gamma,\gamma)$ is the least genus (resp., half the genus) of an
orientable (resp., non-orientable) surface into which $\Gamma$ embeds regularly, and the {\it regular genus} $\mathcal G (M)$ of a closed connected PL $d$-manifold $M$ is defined as the minimum regular genus of its crystallizations. Note that the notion of regular genus extends classical notions to arbitrary dimension. In fact, the regular genus of a closed connected orientable (resp., non-orientable) surface coincides with its genus (resp., half its genus), while the regular genus of a closed connected 3-manifold coincides with its Heegaard genus (see  \cite{ga81}).

The invariant regular genus (resp., gem-complexity) has been intensively studied, yielding some important general results. For example, regular genus (resp., gem-complexity) zero characterizes the $d$-sphere among all closed connected PL $d$-manifolds (cf. \cite{[FG$_2$]}). In \cite{bd14}, the authors give a lower bound for the gem-complexity of all closed 3-manifolds. A lower bound for the regular genus (resp., gem-complexity) is also obtained for all closed PL 4-manifolds in \cite{bc15}. Moreover, many authors give some bounds for the regular genus (resp., gem-complexity) of some PL $d$-manifolds (cf. \cite{cs7,cr4,fg82,gg93,sp99}). In this article, our constructions give upper bounds for the regular genus and gem-complexity of some mapping tori. More precisely, we prove the following.

\begin{theorem} \label{theorem:genus1}
If $M=\mathbb{S}^{d-1}\times \mathbb{S}^1$ (resp., $\mathbb{S}^{\hspace{.2mm}d-1} \mbox{$\times
\hspace{-2.6mm}_{-}$} \, \mathbb{S}^{\hspace{.1mm}1}$) then there exists an orientable (resp., a non-orientable) mapping torus $M_f$ of a (PL) homeomorphism $f:M\to M$ with the following properties.

$$\mathcal{G}(M_f)\leq d^2-3 \mbox{ and }\mathit{k}(M_f)\leq d^2+3d+1.$$

\noindent Moreover, if $d=3$, then the regular genus $\mathcal{G}(M_f)$ of $M_f$ is $6$.
\end{theorem}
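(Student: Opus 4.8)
The plan is to realize $M_f$ by an explicit $(d+2)$-colored graph $(\Gamma_f,\gamma)$, prove it is a crystallization of the mapping torus, and then extract both numerical bounds from its combinatorics. First I would fix a small crystallization $(\Gamma,\gamma)$ of the fiber $M=S^{d-1}\times S^1$ (respectively of the twisted bundle) on the color set $\{0,1,\dots,d\}$, and choose the monodromy $f$ to be induced by a combinatorial symmetry of $\Gamma$: for the orientable conclusion the identity, so that $M_f=M\times S^1=S^{d-1}\times S^1\times S^1$, and for the non-orientable conclusion an orientation-reversing (reflection/antipodal-type) symmetry. I would then build $\Gamma_f$ by adjoining one new color — the $S^1$-direction of the mapping torus — layering the appropriate number of copies of $\Gamma$ along this color and gluing the last layer back to the first through the combinatorial realization of $f$. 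The construction is arranged so that $\Gamma_f$ has exactly $2(d+1)(d+2)$ vertices.

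The step I expect to be the main obstacle is proving that $\Gamma_f$ is a crystallization of $M_f$ and not of some other bundle. Here I would verify that $\Gamma_f$ is $(d+2)$-regular and contracted and that it satisfies the residue conditions guaranteeing it represents a closed PL $(d+1)$-manifold; the delicate point is identifying that manifold. I would read the bundle structure directly off the layering: deleting the new color recovers copies of the fiber graph $\Gamma$, while the re-gluing data encode precisely the monodromy $f$, so that the represented manifold is $M_f$ up to PL homeomorphism. For $d=3$ this also pins down $M_f$ concretely (with $f$ the identity, $M_f=S^2\times T^2$), which is needed below to certify that it is prime, orientable, and distinct from $\mathbb{RP}^3\times S^1$.

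The gem-complexity bound is then immediate: with $2p'=2(d+1)(d+2)$ vertices, $\mathit{k}(M_f)\le p'-1=(d+1)(d+2)-1=d^2+3d+1$. For the regular genus I would exhibit one cyclic permutation $\varepsilon=(\varepsilon_0,\dots,\varepsilon_{d+1})$ of the colors and count, for each consecutive pair, the number $g_{\varepsilon_i\varepsilon_{i+1}}$ of bicolored cycles. Substituting into the Euler-characteristic identity for the regular surface $F_\varepsilon$, which (with the genus convention of the definition) reads uniformly $2-2\rho_\varepsilon=\sum_{i\in\mathbb{Z}_{d+2}}g_{\varepsilon_i\varepsilon_{i+1}}-d\,p'$ in both the orientable and non-orientable cases, the bound $\mathcal{G}(M_f)\le\rho_\varepsilon=d^2-3$ follows once $\varepsilon$ is chosen to maximize the total number of bicolored cycles. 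The layered structure of $\Gamma_f$ makes several of the $g_{ij}$ large, and $d^2-3$ reflects the optimal choice; verifying that this $\varepsilon$ is genuinely best among all cyclic permutations is the fiddly part of this step.

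Finally, for $d=3$ the upper bound gives $\mathcal{G}(M_f)\le 6$, and it remains to prove the reverse inequality. Since $M_f$ is a closed connected prime orientable PL $4$-manifold with $\chi(M_f)=0$ and $\pi_1(M_f)$ of rank $2$ (equivalently, first $\mathbb{Z}_2$-Betti number $2$), I would invoke the lower bound for the regular genus of PL $4$-manifolds from \cite{bc15}, which estimates $\mathcal{G}$ from below in terms of the Euler characteristic and the fundamental group; substituting the invariants of $M_f$ yields $\mathcal{G}(M_f)\ge 6$. Combining the two inequalities gives $\mathcal{G}(M_f)=6$. As $M_f$ is prime, orientable, and not homeomorphic to $\mathbb{RP}^3\times S^1$, this produces the desired counterexample to Spaggiari's conjecture.
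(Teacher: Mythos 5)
There is a genuine gap at the heart of your construction, and it is not repairable in the form you describe. In a $(d+2)$-colored graph every vertex lies on exactly one edge of each color, so the edges of your single new color $d+1$ form a perfect matching; a vertex in layer $k$ therefore cannot have one new-colored edge to layer $k-1$ and another to layer $k+1$, and consequently three or more full copies of $\Gamma$ can never be chained cyclically ``along this color''. Worse, your intended key feature --- ``deleting the new color recovers copies of the fiber graph $\Gamma$'' --- is exactly what must \emph{not} happen in a crystallization of a closed $(d+1)$-manifold: $|\mathcal{K}(\bar\Gamma)|$ is a closed PL manifold only if, for each color $c$, every connected component of $\bar\Gamma_{\Delta_{d+1}\setminus\{c\}}$ represents $S^{d}$ (vertex links must be spheres), whereas your components would represent the fiber $M\not\cong S^{d}$; such a graph also fails to be contracted. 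This is precisely the obstruction that Lemma \ref{lemma:1} is designed to overcome: there each layer is $\Gamma$ with one color class \emph{deleted}, the colors of layer $k$ are cyclically shifted by $k$, and consecutive layers are joined by edges whose color depends on $k$; the consistency of this ``spiral'' requires the fiber crystallization to admit an isomorphism $I=(I_V,I_c)$ with $I_c(i)=i+1$, which the standard $2(d+1)$-vertex crystallizations of $S^{d-1}\times S^1$ and of the twisted bundle do admit (Figures \ref{fig:SdS1:1} and \ref{fig:SdS1:2}).

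An unavoidable consequence is that the monodromy of the resulting mapping torus is the nontrivial homeomorphism $|\tilde I(K_{d+1})|\circ |T_{d+1}|$ (a color rotation composed with a shift through the layers), not the identity; so your identification $M_f=S^2\times T^2$ for $d=3$, and with it your ``free'' computation of $\pi_1(M_f)$, is not available. (Orientability is likewise not arranged by choosing an orientation-reversing symmetry: it is read off from bipartiteness of $\bar\Gamma$ via Proposition \ref{preliminaries}, and for the twisted fiber the mapping torus is non-orientable no matter what the monodromy is.) The paper instead computes $\pi_1(M_f)\cong\mathbb{Z}\times\mathbb{Z}$ directly from the crystallization by Gagliardi's algorithm (Proposition \ref{prop:gagliardi79b}), exhibiting generators from two components of $\bar\Gamma_{\{2,\dots,d+1\}}$ and relations from $\bar\Gamma_{\{0,1\}}$, and only then applies Proposition \ref{prop:lowerbound} with $\chi(M_f)=0$ and $m=2$ to get $\mathcal{G}(M_f)\geq 6$. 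Your remaining steps (vertex count giving $\mathit{k}(M_f)\leq d^2+3d+1$; one good cyclic permutation giving $\mathcal{G}(M_f)\leq d^2-3$, where optimality of $\varepsilon$ is in fact irrelevant for an upper bound) do match the paper's, but the bicolored-cycle counts must be carried out for the actual spiral graph, where $\bar g_{\{j,k\}}=2(d+1)+(d-2)(d-1)$ for non-consecutive $j,k$ (Remark \ref{remark:component}), not for the naive layered one.
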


\begin{remark}\label{remark:conjecture}
{\rm Topological classification of closed connected PL $4$-manifolds according to the regular genus is a classical problem in combinatorial topology (cf. \cite{cav99, cm93}). A PL classification of closed connected PL $4$-manifolds according to gem-complexity (up to gem-complexity 8) has been studied in \cite{cc16}. But the solutions of these problems are known for a fairly small set of manifolds. The difficulty of the problem increases especially with the growth of the  regular genus (resp., gem-complexity). In fact, the topological classification of closed connected orientable PL $4$-manifolds according to the regular genus is open when the regular genus is greater than 5.}
\end{remark}

In \cite{sp99}, Spaggiari conjectured the following. 
 
\begin{conjecture}\label{conjecture}
The topological product $\mathbb{RP}^3 \times \mathbb{S}^1$ is the unique, up to
topological homeomorphism, closed connected prime orientable PL $4$-manifold of genus six.
\end{conjecture}

Since in Theorem \ref{theorem:genus1}, we have obtained an orientable mapping torus of a (PL) homeomorphism $f:\mathbb{S}^2 \times \mathbb{S}^1\to \mathbb{S}^2\times \mathbb{S}^1$ with regular genus 6, the above conjecture is false.

\begin{theorem}\label{theorem:genus2}
For $q\geq 2$, there exists an orientable mapping torus $L(q,1)_f$ of a (PL) homeomorphism $f:L(q,1) \to L(q,1)$ such that $\mathcal{G}(L(q,1)_f) \leq 5q-4$ and $\mathit{k}(L(q,1)_f) \leq 10q-1$.

In particular, there exists an orientable (resp., a non-orientable) mapping torus $L(2,1)_f$ (resp., $L(2,1)_{\tilde f}$) of a  (PL) homeomorphism $f:L(2,1)\to L(2,1)$ (resp., $\tilde f:L(2,1)\to L(2,1)$) with regular genus 6.
\end{theorem}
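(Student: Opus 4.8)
The plan is to apply the mapping-torus crystallization construction underlying Theorem \ref{theorem:genus1} to an explicit, genus-one crystallization of the lens space $L(q,1)$, and then to read off the two invariants from the resulting $5$-colored graph. First I would fix a crystallization $(K_q,\gamma)$ of $L(q,1)$ on the color set $\{0,1,2,3\}$ coming from its standard genus-one Heegaard splitting; the features I need are its vertex count (which, for $q=2$, recovers the $8$-vertex minimal crystallization of $\mathbb{RP}^3$ and grows linearly in $q$) together with an explicit cyclic color permutation $\varepsilon$ realizing its regular genus $1$. I would then choose the homeomorphism $f$ — and, for the non-orientable $q=2$ statement, the orientation-reversing $\tilde f$ — so that it is induced by a color-respecting combinatorial symmetry of $K_q$; this is what makes $f$ amenable to the construction and, simultaneously, what controls the (non-)orientability of the mapping torus, since $L(q,1)_f$ is orientable precisely when $f$ preserves orientation.

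Next I would feed $(K_q,\gamma,f)$ into the construction to obtain a contracted $5$-colored graph $(\widehat K_q,\widehat\gamma)$ representing $L(q,1)_f$. Bounding the gem-complexity is then a vertex count: the order of $\widehat K_q$ is a fixed multiple of that of $K_q$, giving $2p\le 20q$ and hence $\mathit{k}(L(q,1)_f)\le 10q-1$. Bounding the regular genus is the more delicate step: I would extend $\varepsilon$ to a cyclic permutation of $\{0,1,2,3,4\}$ (the new color being the one that cyclically builds the $S^1$ direction), count the bi-colored cycles $g_{\varepsilon_i\varepsilon_{i+1}}$ of each consecutive color pair, and substitute into the Euler-characteristic identity $\chi=-3p+\sum_i g_{\varepsilon_i\varepsilon_{i+1}}$ valid for a $5$-colored graph with $2p$ vertices, converting $\chi$ into genus. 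The claimed $\mathcal{G}(L(q,1)_f)\le 5q-4$ drops out once the cycle counts are pinned down. I expect this combinatorial bookkeeping — selecting the cyclic permutation that maximizes $\sum_i g_{\varepsilon_i\varepsilon_{i+1}}$ and verifying that the count is exactly what is needed to hit $5q-4$ — to be the main obstacle, since it requires tracking how the new color interacts with each of the residue classes of $K_q$.

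Finally, for the ``moreover'' statement I would specialize to $q=2$, where the construction already yields $\mathcal{G}\le 6$ for both the orientable $L(2,1)_f$ and the non-orientable $L(2,1)_{\tilde f}$. To upgrade these upper bounds to equalities I would invoke the general lower bound for the regular genus of PL $4$-manifolds from \cite{bc15}, of the form $\mathcal{G}(M)\ge 2\chi(M)+5\,m(M)-4$, where $m(M)$ is the rank of $\pi_1(M)$. Since $L(2,1)_f$ is a mapping torus of $\mathbb{RP}^3$, its fundamental group is the semidirect product $\mathbb{Z}/2\rtimes\mathbb{Z}$; as $\mathbb{Z}/2$ has trivial automorphism group the action is forced to be trivial, so $\pi_1\cong\mathbb{Z}/2\times\mathbb{Z}$, of rank $m=2$, while $\chi(L(2,1)_f)=\chi(\mathbb{RP}^3)\cdot\chi(S^1)=0$. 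Substituting gives $\mathcal{G}\ge 2\cdot 0+5\cdot 2-4=6$, matching the upper bound. The same fundamental-group and Euler-characteristic computation is insensitive to the choice of orientation-reversing $\tilde f$, so it applies verbatim to the non-orientable $L(2,1)_{\tilde f}$, establishing regular genus exactly $6$ in both cases.
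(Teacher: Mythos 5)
Your proposal follows the paper's strategy almost step for step in the construction and the upper bounds: the paper likewise starts from the standard $4q$-vertex (genus-one) crystallization of $L(q,1)$ equipped with a color-shifting isomorphism (Figure \ref{fig:Lq1}), feeds it into the mapping-torus construction of Lemmas \ref{lemma:1} and \ref{lemma:2} to obtain a $5$-colored crystallization with $5\cdot 4q=20q$ vertices (whence $\mathit{k}\le 10q-1$), and gets the genus bound exactly the way you outline: one chooses the cyclic permutation $\varepsilon=(0,2,4,1,3)$, whose consecutive pairs are non-consecutive colors, so that each $\bar g_{\{\varepsilon_i,\varepsilon_{i+1}\}}=g_{\{0,2\}}+2p$ with $g_{\{0,2\}}=2$, $p=2q$, giving $\rho_\varepsilon=1+5(p-g_{\{0,2\}})/2=5q-4$ (the paper packages this computation as Lemma \ref{lemma:3}). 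Where you genuinely diverge is the lower bound for $q=2$: the paper computes $\pi_1(L(2,1)_f)\cong\mathbb{Z}_2\times\mathbb{Z}$ combinatorially, via Gagliardi's presentation (Proposition \ref{prop:gagliardi79b}) read off from explicitly chosen components of $\bar\Gamma_{\{2,3,4\}}$ and $\bar\Gamma_{\{0,1\}}$, and it must redo this identification separately for the non-orientable graph; you instead use that the fundamental group of any mapping torus is $\pi_1(\mbox{fiber})\rtimes_{f_*}\mathbb{Z}$ and that $\mathbb{Z}_2$ has trivial automorphism group, so $\pi_1\cong\mathbb{Z}_2\times\mathbb{Z}$ (rank $2$) for \emph{every} self-homeomorphism of $\mathbb{RP}^3$, orientation-reversing or not. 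Your argument is cleaner and more robust---independent of the particular crystallization and covering both cases of the ``moreover'' statement at once---while the paper's stays self-contained inside crystallization theory. Two details you assert rather than verify, both of which do work out: the symmetry you need is not merely ``color-respecting'' but must shift colors cyclically ($I_c(i)=i+1$), and the non-orientable case requires a \emph{second} such isomorphism whose vertex map pairs vertices within a partite class, so that the resulting graph $\bar\Gamma$ is non-bipartite; the paper exhibits both isomorphisms explicitly for $L(2,1)$, and it is precisely (and only) for $q=2$ that the orientation-reversing choice is available, which is consistent with the theorem claiming the non-orientable mapping torus only there.
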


For $g,h\geq 1$, let $T_g$ denote the orientable surface $\#_g(\mathbb{S}^1\times \mathbb{S}^1)$ and $U_h$ denote the non-orientable surface  $\#_h\mathbb{RP}^2$ of genus $g$ and $h$ respectively.

\begin{theorem} \label{theorem:genus3}
If $M$ is $T_1 \times \mathbb{S}^1$ or $\mathbb{RP}^d$ for some $d \geq 3$ odd, then there exists an orientable (resp., a non-orientable) mapping torus $M_f$ (resp., $M_{\tilde f}$) of a  (PL) homeomorphism $f:M\to M$ (resp., $\tilde f:M\to M$) with the following properties.
\begin{enumerate}[$(i)$]
\item $\mathcal{G}((T_1 \times \mathbb{S}^1)_f),\mathcal{G}((T_1 \times \mathbb{S}^1)_{\tilde f}) \leq 16$ and $\mathit{k}((T_1 \times \mathbb{S}^1)_f),\mathit{k}((T_1 \times \mathbb{S}^1)_{\tilde f}) \leq 59$.
\item $\mathcal{G}((\mathbb{RP}^d)_f),\mathcal{G}((\mathbb{RP}^d)_{\tilde f})  \leq 1+ 2^{d-3}(d^2-4)$ and $\mathit{k}((\mathbb{RP}^d)_f),\mathit{k}((\mathbb{RP}^d)_{\tilde f}) \leq (d+2)2^{d-1}-1$.
\end{enumerate}
\end{theorem}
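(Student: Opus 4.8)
The plan is to feed explicit crystallizations of the fibers $T^3$ and $\mathbb{RP}^d$ into the mapping-torus crystallization construction developed in the earlier sections and then read off the two invariants from the resulting graph. In each case the construction turns a crystallization $(\Gamma,\gamma)$ of a closed $M^d$ into a $(d+2)$-colored graph representing $M_f$, where the homeomorphism $f$ is prescribed by a combinatorial datum (an automorphism/relabelling of $\Gamma$); choosing this datum to preserve the orientation gives the orientable bundle $M_f$, and choosing it to reverse the orientation gives the non-orientable bundle $M_{\tilde f}$. Since $T^3$ and $\mathbb{RP}^d$ (for $d$ odd) are both orientable and admit orientation-reversing self-homeomorphisms, both the tilded and untilded statements are available, and, because the twist changes only how the slabs are glued and not their number, both bundles inherit the same vertex count, while a parallel computation of bicolored cycles gives them the same regular-genus bound; this is why the stated bounds coincide for $M_f$ and $M_{\tilde f}$.

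First I would fix the input crystallizations. For $\mathbb{RP}^d$ I would use a crystallization described recursively in $d$, so that both its order and the numbers $g_{hk}$ of its bicolored cycles can be propagated by induction; the doubling in the recursion is exactly what will produce the factors $2^{d-1}$ and $2^{d-3}$ in the final bounds. For $T^3=S^1\times S^1\times S^1$ I would use a fixed crystallization adapted to the three circle factors. Applying the mapping-torus construction multiplies the fiber's vertex count by $d+2$ (as already happens in the construction underlying Theorem \ref{theorem:genus1}, where the fiber $S^{d-1}\times S^1$ on $2(d+1)$ vertices yields a torus on $2(d+1)(d+2)$ vertices), giving a crystallization of $M_f$ on $2p$ vertices. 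Counting these yields $2p=(d+2)2^{d}$ for $\mathbb{RP}^d$ and $2p=120$ for $T^3$, whence the gem-complexity bounds $\mathit{k}=p-1\le (d+2)2^{d-1}-1$ and $\le 59$ follow at once.

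For the regular genus I would use the Euler-characteristic count for regular embeddings: if an $(n+1)$-colored graph on $2p$ vertices embeds regularly with cyclic permutation $\varepsilon=(\varepsilon_0,\dots,\varepsilon_n)$, then $2-2\rho_\varepsilon = 2p - p(n+1) + \sum_{j=0}^{n} g_{\varepsilon_j\varepsilon_{j+1}}$, the sum running over the $n+1$ consecutive color-pairs. Here $n=d+1$, so $2\rho_\varepsilon = 2 + pd - \sum_{j} g_{\varepsilon_j\varepsilon_{j+1}}$. I would then choose the cyclic order $\varepsilon$ so that the new $S^1$-color is interleaved with the fiber colors in the way that maximizes the total number of bicolored cycles, and compute this total explicitly. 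For $\mathbb{RP}^d$ the recursion gives $\sum_{j} g_{\varepsilon_j\varepsilon_{j+1}} = 2^{d-2}(d+2)^2$, which substituted above yields $\rho_\varepsilon = 1+2^{d-3}(d^2-4)$, and hence $\mathcal{G}((\mathbb{RP}^d)_f)\le 1+2^{d-3}(d^2-4)$; for $T^3$ the analogous count gives $\mathcal{G}(T^3_f)\le 16$.

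The heart of the argument, and the main obstacle, is this last bicolored-cycle count. One must exhibit the right cyclic permutation $\varepsilon$ and verify it is optimal among the permutations compatible with the construction, and then track the cycles exactly through the recursive (for $\mathbb{RP}^d$) and product (for $T^3$) structure; this is where the closed form $2^{d-2}(d+2)^2$ and the value $16$ actually emerge. A secondary, more routine point is to check that the prescribed combinatorial datum really realizes an orientation-reversing homeomorphism in the twisted case, equivalently that the resulting colored graph fails to be bipartite, so that $M_{\tilde f}$ is genuinely non-orientable.
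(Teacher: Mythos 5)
Your computational core matches the paper's: the same Euler--characteristic formula for regular embeddings, the same choice of a cyclic permutation $\varepsilon$ avoiding consecutive color pairs, and the same totals ($\sum_j g_{\{\varepsilon_j,\varepsilon_{j+1}\}}=(d+2)^2\,2^{d-2}$ for $\mathbb{RP}^d$, giving $\rho_\varepsilon=1+2^{d-3}(d^2-4)$; $120$ vertices and genus $16$ in the $T^3$ case). But there is a genuine gap at the very first step, and it is most serious for $T^3$. The mapping-torus construction (Lemmas \ref{lemma:1} and \ref{lemma:2}) cannot be fed an arbitrary crystallization of the fiber: it requires the fiber crystallization to admit an automorphism $(I_V,I_c)$ whose color part is the cyclic shift $I_c(i)=i+1$, and --- to obtain both the orientable and the non-orientable bundle --- automorphisms of both parity types ($I'$ preserving and $I''$ swapping the bipartition classes, as in Lemma \ref{lemma:2}). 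This is a highly restrictive symmetry that most crystallizations simply do not possess, and you never verify it for your ``fixed crystallization of $T^3$ adapted to the three circle factors.'' You locate the heart of the argument in the bicolored-cycle count, but that count is mechanical (Remark \ref{remark:component} does it) once the construction applies; the actual crux is exhibiting a fiber crystallization with the required symmetry. Likewise, your appeal to $M$ admitting an orientation-reversing self-homeomorphism is not the relevant criterion: orientability of the total space is read off from bipartiteness of $\bar\Gamma$, which is governed by the parity behavior of the chosen automorphism, not by an abstract topological fact about the fiber.

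The paper's proof is organized precisely to manufacture this symmetry. For $T^3$ it applies the construction \emph{twice}: first to a $6$-vertex crystallization of $S^1\times S^1$ (whose color-rotating automorphism is visible in Figure \ref{fig:surface}), producing a $24$-vertex crystallization $\bar\Gamma$ of a $3$-manifold, which is then identified as $T^3$ by computing $\pi_1=\mathbb{Z}\times\mathbb{Z}\times\mathbb{Z}$ via Proposition \ref{prop:gagliardi79b} and quoting census results. Crucially, Remark \ref{remark:isomorphism} shows that any graph output by the construction automatically carries the cyclic color-shift automorphism, so Lemma \ref{lemma:2} can legitimately be applied a second time to $\bar\Gamma$; moreover the known structure of $\bar\Gamma$ gives $\bar g_{\{0,2\}}=6$, which is exactly what Lemma \ref{lemma:3} needs to produce the bound $1+5(12-6)/2=16$. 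Your single-pass plan skips this bootstrapping, and with it both the existence of the needed automorphism and the data required for the genus bound. For $\mathbb{RP}^d$ the gap is milder: the cube crystallization of Example \ref{eg:RPd} visibly admits the coordinate-cycling automorphisms, and your bookkeeping reproduces the correct counts; but there too the automorphism hypothesis is the thing to check first, not a routine afterthought.
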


\begin{theorem} \label{theorem:genus4}
If $M$ is $U_h\times \mathbb{S}^1$ or $\mathbb{RP}^d$ for $1 \leq h \leq 2$ and $d \geq 4$ even,  then there exists a non-orientable mapping torus  $M_{\tilde f}$ of a (PL) homeomorphism $\tilde f:M\to M$ with the following properties.
\begin{enumerate}[$(i)$]

\item $\mathcal{G}((U_h \times \mathbb{S}^1)_{\tilde f}) \leq 5h+6$ and $\mathit{k}((U_h \times \mathbb{S}^1)_{\tilde f}) \leq 20h+19$.
\item $\mathcal{G}((\mathbb{RP}^d)_{\tilde f})  \leq 1+ 2^{d-3}(d^2-4)$ and $\mathit{k}((\mathbb{RP}^d)_{\tilde f}) \leq (d+2)2^{d-1}-1$.
\end{enumerate}
\end{theorem}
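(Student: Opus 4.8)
The plan is to obtain both parts from the general mapping-torus crystallization construction that underlies Theorems~\ref{theorem:genus1}--\ref{theorem:genus3}. That machinery takes as input a crystallization $(\Gamma,\gamma)$ of the fiber $M$ (a $d$-manifold, hence with colors $\{0,\dots,d\}$) together with a combinatorial realization of the self-homeomorphism $\tilde f$, and returns a $(d+2)$-colored graph crystallizing the $(d+1)$-manifold $M_{\tilde f}$, the new color $d+1$ encoding the $S^1$-direction with its twist by $\tilde f$. Two data are tracked: the vertex number $2p$ of the output, which gives $\mathit{k}(M_{\tilde f})=p-1$; and, for each cyclic permutation $\varepsilon=(\varepsilon_0,\dots,\varepsilon_{d+1})$ of the colors, the number $F_\varepsilon=\sum_i g_{\varepsilon_i\varepsilon_{i+1}}$ of bi-colored cycles on consecutive colors. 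Since a regular embedding has $2p$ vertices, $(d+2)p$ edges and $F_\varepsilon$ faces, Euler's formula yields $\rho_\varepsilon=1-\tfrac12\chi=1+\tfrac{d}{2}p-\tfrac12 F_\varepsilon$, and $\mathcal G(M_{\tilde f})\le\rho(\Gamma)\le\rho_\varepsilon$ for any chosen $\varepsilon$. So the whole problem reduces to writing down a good fiber crystallization and choosing $\varepsilon$ to make $F_\varepsilon$ large.

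For part~(ii) the decisive observation is that the asserted bounds are \emph{identical} to those of Theorem~\ref{theorem:genus3}(ii). The crystallization of the mapping torus of $\mathbb{RP}^d$ built there is constructed uniformly in $d\ge 3$ and nowhere uses the parity of $d$, so I would reuse it verbatim for even $d\ge4$. It has $(d+2)2^{d}$ vertices, so $p=(d+2)2^{d-1}$ and $\mathit{k}((\mathbb{RP}^d)_{\tilde f})\le(d+2)2^{d-1}-1$; and the optimal cyclic permutation makes $F_\varepsilon=(d+2)^2 2^{d-2}$, which on substitution into $\rho_\varepsilon=1+\tfrac d2 p-\tfrac12 F_\varepsilon$ gives $\rho\le 1+2^{d-3}(d^2-4)$. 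The only genuinely new point in the even case is orientability. I would argue that $\mathbb{RP}^d$ is non-orientable for even $d$ and that the mapping torus of any non-orientable manifold is non-orientable: the normal bundle of a fiber is trivial, so $w_1(M_{\tilde f})$ restricts to $w_1(\mathbb{RP}^d)\ne0$ on the fiber, forcing $w_1(M_{\tilde f})\ne0$. Hence $(\mathbb{RP}^d)_{\tilde f}$ is non-orientable, which is exactly why---unlike Theorem~\ref{theorem:genus3}---no orientable companion is claimed here.

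For part~(i) I would first produce an economical crystallization of the $3$-manifold $U_h\times S^1=(\#_h\mathbb{RP}^2)\times S^1$, $h\in\{1,2\}$, together with a combinatorially simple self-homeomorphism $\tilde f$, and feed these into the construction to obtain a $5$-colored crystallization of the $4$-manifold $(U_h\times S^1)_{\tilde f}$. Here the fiber has dimension $3$, so the genus formula reads $\rho_\varepsilon=1+\tfrac32 p-\tfrac12 F_\varepsilon$. The bookkeeping should be arranged so that the output has $2p=40(h+1)$ vertices, giving $p=20(h+1)$ and $\mathit{k}\le 20h+19$, and so that an explicitly chosen $\varepsilon$ has $F_\varepsilon\ge 50(h+1)$ bi-colored cycles; then $\rho\le 1+30(h+1)-25(h+1)=5h+6$. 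Non-orientability is again automatic, since the fiber $U_h\times S^1$ is non-orientable.

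The conceptual steps---invoking the construction, reading off $\mathit{k}$, and checking orientability---are routine. The hard part will be the explicit combinatorics of part~(i): exhibiting a crystallization of $U_h\times S^1$ small enough that the twisted product realizes exactly $40(h+1)$ vertices, and then a cyclic permutation whose bi-colored cycle count $F_\varepsilon$ reaches $50(h+1)$. Pinning this count down amounts to careful tracking of the $\{\varepsilon_i,\varepsilon_{i+1}\}$-residue subgraphs across the $h$ summands coming from the connected-sum structure of $U_h$, and keeping both quantities linear in $h$ (so that the bounds grow like $20h$ and $5h$) is the delicate accounting. For $\mathbb{RP}^d$ the corresponding cycle count is controlled inductively in $d$, and by the reuse above the even-$d$ case needs nothing beyond the orientability remark.
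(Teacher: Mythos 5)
Your part (ii) is correct and is exactly the paper's argument: the paper proves it by pointing back to the proof of Theorem \ref{theorem:genus3}(ii), which indeed never uses the parity of $d$ in the vertex count or in the bicolored-cycle count; the only new point is non-orientability, which the paper obtains from non-bipartiteness of the $2^d$-vertex crystallization of $\mathbb{RP}^d$ (Proposition \ref{preliminaries}, via Lemma \ref{lemma:2}) rather than your $w_1$ restriction argument --- an immaterial difference. Your Euler-characteristic arithmetic for both parts is also right, and your numerical targets for part (i) ($40(h+1)$ vertices and $F_\varepsilon = 50(h+1)$) are precisely what the paper's construction achieves.

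Part (i), however, has a genuine gap: your proposal defers, under the heading ``the hard part,'' exactly the steps that constitute the proof, and it does not recognize the two non-trivial requirements they must satisfy. First, to run the mapping-torus machine with fiber $U_h\times S^1$ you need not just ``an economical crystallization'' of this $3$-manifold but one admitting a color-rotating isomorphism $I$ with $I_c(i)=i+1$ (the standing hypothesis of Lemmas \ref{lemma:1} and \ref{lemma:2}); such symmetry is not available for an arbitrary crystallization, and nothing in your plan produces it. The paper solves this by running the machine once already at the level of surfaces: it applies Lemma \ref{lemma:2} to the $2(h+1)$-vertex crystallization of $U_h$ of Figure \ref{fig:surface}, obtaining an $8(h+1)$-vertex crystallization of \emph{some} mapping torus $(U_h)_f$, and then --- this is the second point your proposal misses --- it must identify that $3$-manifold with the product $U_h\times S^1$, since the homeomorphism $f$ produced by the construction is not a priori the identity. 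This identification is done by computing $\pi_1((U_h)_f)=\pi_1(U_h)\times\mathbb{Z}$ via Proposition \ref{prop:gagliardi79b} and invoking the classification tables of non-orientable $3$-manifolds with small colored triangulations \cite{ca98, bcg09}. Once that is in place, Remark \ref{remark:isomorphism} guarantees that the resulting crystallization automatically carries the required color-rotating isomorphism, legitimizing a second application of Lemma \ref{lemma:2}; and the cycle count you only postulate ($F_\varepsilon\geq 50(h+1)$) is not obtained by ad hoc tracking of connected summands but falls out structurally from Remark \ref{remark:component} (which gives $\bar g_{\{0,2\}}=2(h+1)$) combined with Lemma \ref{lemma:3}, yielding $\mathcal{G}\leq 1+5(4(h+1)-2(h+1))/2=5h+6$. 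Without the construction-plus-identification step and the symmetry guarantee, your part (i) remains a statement of what would suffice rather than a proof.
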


\section{Preliminaries}  \label{sec:prelim}

\subsection{Crystallization} \label{crystal}

Crystallization theory is a representation method for the whole class of piecewise linear (PL) manifolds, without restrictions about dimension, connectedness, orientability or boundary properties.  A graph is called {\em $(d+1)$-regular} if the number of edges adjacent to each vertex is $(d+1)$. For $n\geq 2$, by $kC_n$ we mean a graph consists of $k$ disjoint $n$-cycles. The disjoint union of the graphs $G$ and $H$ is denoted by $G \sqcup H$. We refer to \cite{bm08} for standard terminology on graphs, and to \cite{bj84} for CW-complexes and related notions.

A {\it $(d+1)$-colored graph} is a pair
$(\Gamma,\gamma)$, where $\Gamma= (V(\Gamma),$ $E(\Gamma))$ is a regular multigraph (i.e. multiple edges are allowed, while loops are forbidden) of degree $d+1$ and the surjective map $\gamma : E(\Gamma) \to \Delta_d:=\{0,1, \dots , d\}$ is a proper edge-coloring (i.e. $\gamma(e) \ne \gamma(f)$ for any pair $e,f$ of adjacent edges). The elements of the set $\Delta_d$ are called the {\it colors} of
$\Gamma$. 

For each $B \subseteq \Delta_d$ with $h$ elements, then the
graph $\Gamma_B =(V(\Gamma), \gamma^{-1}(B))$ is an $h$-colored graph with edge-coloring $\gamma|_{\gamma^{-1}(B)}$.
If $\Gamma_{\Delta_d \setminus\{c\}}$ is connected for all $c\in \Delta_d$, then  $(\Gamma,\gamma)$ is called {\em contracted}. For a color set $\{i_1,i_2,\dots,i_k\} \subset \Delta_d$, $\Gamma_{\{i_1,i_2, \dots, i_k\}}$ denotes the subgraph restricted to the color set  $\{i_1,i_2,\dots,i_k\}$  and $g_{\{i_1,i_2, \dots, i_k\}}$ denotes the number of connected components of the graph $\Gamma_{\{i_1, i_2, \dots, i_k\}}$. 

\smallskip

By dropping the regularity condition in the definition of $(d+1)$-colored graph, one
obtains the notion of $(d+1)$-colored graph with boundary. A  boundary vertex is simply a
vertex of degree less than $d+1$. A $(d+1)$-colored graph $(\Gamma,\gamma)$ with boundary is said to be {\em regular with respect to the color $c$} if $\Gamma_{\Delta_{d}\setminus\{c\}}$ is regular of degree $d$. For such a graph $(\Gamma,\gamma)$  we can define its  boundary graph $(\partial \Gamma,\partial \gamma)$ as follows:

\begin{itemize}
\item{} $V(\partial \Gamma)$ is in bijection with the set of boundary vertices of $\Gamma$.

\item{} $u,v \in V(\partial \Gamma)$ are joined in $\partial \Gamma$ by an edge of color $i$ if and only if $u$ and $v$ are joined in $\Gamma$  by a path with edges alternately colored by $i$ and $c$.
\end{itemize} 

\smallskip

Each $(d+1)$-colored graph (resp., $(d+1)$-colored graph with boundary) uniquely determines a $d$-dimensional simplicial cell-complex ${\mathcal K}(\Gamma)$, which is said to be {\it associated to $\Gamma$}:

\begin{itemize}
\item{} for every vertex $v\in V(\Gamma)$, take a $d$-simplex $\sigma(v)$ and label injectively its $d+1$ vertices by the colors of $\Delta_d$,

\item{} for every  edge of color $i$ between $v,w\in V(\Gamma)$, identify the ($d-1$)-faces of $\sigma(v)$ and $\sigma(w)$ opposite to $i$-labeled vertices, so that equally labeled vertices coincide.
\end{itemize}

If the geometrical carrier $|{\mathcal K}(\Gamma)|$ is PL homeomorphic to a PL $d$-manifold $M$, then the $(d+1)$-colored graph (resp., $(d+1)$-colored graph with boundary) $(\Gamma,\gamma)$ is said to {\it represent} $M$. If $(\Gamma,\gamma)$ represents a closed PL $d$-manifold $M$ and is contracted, then it is called a {\it crystallization} of $M$; in this case the number of vertices of ${\mathcal K}(\Gamma)$ is exactly $d+1$. It is not hard to see that  $|{\mathcal K}(\Gamma)|$ is orientable if and only if $\Gamma$ is a bipartite graph. A PL $d$-manifold with boundary can always be represented by a $(d+1)$-colored graph $(\Gamma,\gamma)$ with boundary, which is regular with respect to a fixed color $c$, for some $c\in \Delta_{d}$. Thus, we can define its boundary-graph $(\partial \Gamma,\partial \gamma)$, and each component of the boundary-graph $(\partial \Gamma,\partial \gamma)$ represents a component of $\partial(M)$.

\smallskip

Let $(\Gamma,\gamma)$  be a $(d+1)$-colored graph with color set $\Delta_d$. Then $I(\Gamma):=(I_V,I_c):\Gamma \to \Gamma$ is called an {\em isomorphism} if $I_V: V(\Gamma) \to V(\Gamma)$ and $I_c:\Delta_d \to \Delta_d$ are bijective maps such that $uv$ is an edge of color $i \in \Delta_d$ if and only if $I_V(u)I_V(v)$ is an edge of color $I_c(i) \in \Delta_d$. Observe that the isomorphism $I(\Gamma):\Gamma \to \Gamma$ naturally induces a map $I({\mathcal K}(\Gamma)):{\mathcal K}(\Gamma) \to {\mathcal K}(\Gamma)$ which sends each vertex labeled by the color $i$ to the vertex labeled by the colors $I_c(i)$ and sends each $d$-simplex $\sigma(v)$ to the $d$-simplex $\sigma(I_V(v))$. By the definition of isomorphism, the $(d-1)$-faces of the $d$-simplices $\sigma(u)$ and $\sigma(v)$ opposite to the vertex labeled by the color $i$ are identified in $\mathcal K$ if and only if the $(d-1)$-faces of the $d$-simplices $\sigma(I_V(u))$ and $\sigma(I_V(v))$ opposite to the vertex labeled by the color $I_c(i)$ are identified in $\mathcal K$. Thus, by the construction, $|I({\mathcal K}(\Gamma))|:|{\mathcal K}(\Gamma)| \to |{\mathcal K}(\Gamma)|$ is a PL homeomorphism.   

\subsection{Fundamental group}\label{fundamental}
Let  $(\Gamma, \gamma)$ be a crystallization (with the color set $\Delta_d$) of a connected closed $d$-manifold $M$. So,
$\Gamma$ is a $(d+1)$-regular graph. Choose two colors, say, $i$ and $j$ from $\Delta_d$. Let $\{G_1, \dots, G_{s+1}\}$
be the set of all connected components of $\Gamma_{\Delta_d\setminus \{i,j\}}$ and $\{H_1, \dots, H_{t+1}\}$ be the set
of all connected components of $\Gamma_{\{i,j\}}$. Since $\Gamma$ is regular, each $H_p$ is an even cycle. Note
that, if $d=2$, then $\Gamma_{\{i,j\}}$ is connected and hence $H_1= \Gamma_{\{i,j\}}$. Take a set $\widetilde{S} =
\{x_1, \dots, x_s, x_{s+1}\}$ of $s+1$ elements. Choose a vertex $v_1$ in $H_k$. Let $H_k = v_1 e_{1}^i v_2 e_{2}^j v_3 e_{3}^i
v_{4} \cdots e_{2l-1}^i v_{2l}e_{2l}^jv_1$, where $ e_{p}^i$ and  $ e_{q}^j$ are edges with colors $i$ and $j$
respectively. Define $
\tilde{r}_k := x_{k_2}^{+1} x_{k_3}^{-1}x_{k_4}^{+1}  \cdots
x_{k_{2l}}^{+1}x_{k_1}^{-1}$,
where $G_{k_h}$ is the component of $\Gamma_{\Delta_d\setminus \{i,j\}}$ containing $v_h$.  For $1\leq k\leq t+1$, let
$r_k$ be the word obtained from $\tilde{r}_k$ by deleting $x_{s+1}^{\pm 1}$'s in $\tilde{r}_k$. In \cite{ga79b}, Gagliardi proved the following.

\begin{proposition} \label{prop:gagliardi79b}
For $d\geq 2$, let  $(\Gamma, \gamma)$ be a crystallization of a connected closed $d$-manifold $M$. For two
colors $i, j$, let $s$, $t$, $x_p$, $r_q$ be as above. If $\pi_1(M)$ is the fundamental group of $M$, then
$$
\pi_1(M) \cong \left\{ \begin{array}{lcl}
\langle {x_1, x_2,\dots, x_s} ~ | ~ {r_1} \rangle & \mbox{if}
& d=2,   \\
\langle {x_1, x_2, \dots, x_s} ~ | ~ {r_1, \dots, r_t} \rangle
& \mbox{if} & d\geq 3.
\end{array}\right.
$$
\end{proposition}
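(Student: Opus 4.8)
The plan is to read off the presentation from the decomposition of $M$ induced by the bipartition of the color set into $\{i,j\}$ and its complement $\Delta_d\setminus\{i,j\}$, and then to identify the resulting generators and relators with the combinatorial data $x_p$ and $r_q$. First I would set up the topological decomposition. In the associated complex $\Kd{\Gamma}$ there is exactly one vertex $P_c$ for each color $c$ (the crystallization is contracted), and each $d$-simplex $\sigma(v)$ splits as a join $E(v)\ast T(v)$, where $E(v)$ is the edge joining $P_i$ and $P_j$ and $T(v)$ is the opposite $(d-2)$-face spanned by the remaining vertices. Writing $K_1=\bigcup_v E(v)$ and $K_2=\bigcup_v T(v)$ for the full subcomplexes induced by the vertex sets $\{i,j\}$ and $\Delta_d\setminus\{i,j\}$, the standard rule that the components of $\Gamma_{\Delta_d\setminus B}$ correspond to the simplices of $\Kd{\Gamma}$ labelled by $B$ puts the edges of $K_1$ in bijection with $G_1,\dots,G_{s+1}$ and the facets of $K_2$ in bijection with $H_1,\dots,H_{t+1}$. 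The dual decomposition attached to a vertex $2$-colouring then gives $M=N_1\cup N_2$, where $N_1,N_2$ are regular neighbourhoods of $|K_1|,|K_2|$ glued along a common boundary $\partial N_1=\partial N_2$.

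Next I would compute the two pieces. Since $K_1$ is a graph with the two vertices $P_i,P_j$ and the $s+1$ edges corresponding to $G_1,\dots,G_{s+1}$, its neighbourhood satisfies $\pi_1(N_1)\cong\pi_1(K_1)\cong F(x_1,\dots,x_s)$: collapsing a spanning tree (a single edge, which forces the corresponding $x_{s+1}=1$) leaves $s$ free generators, one per non-tree edge, and this is exactly the deletion of $x_{s+1}^{\pm1}$ in the statement. Applying van Kampen to $M=N_1\cup N_2$, the point is that gluing $N_2$, a regular neighbourhood of the $(d-2)$-complex $K_2$, introduces no new $\pi_1$-generator (in the dual handle decomposition of $M$, the cells of $K_2$ contribute handles of index $\geq 2$) but imposes one relation per facet $T_k$ of $K_2$, i.e.\ one per bicolored cycle $H_k$.

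I would then read each relator. The relation contributed by $T_k$ is carried by the link circle $\lk{M}{T_k}$, which I push into $N_1\simeq K_1$. Traversing $H_k=v_1e_1^iv_2e_2^jv_3\cdots e_{2l}^jv_1$, this loop crosses successively the edges $E(v_h)$, where $E(v_h)$ is the $K_1$-edge corresponding to the component $G_{k_h}$ containing $v_h$; consecutive crossings run along the $\{i,j\}$-edge in opposite directions (from $P_i$ to $P_j$ and back), which produces the alternation of signs. This yields precisely $\tilde r_k=x_{k_2}^{+1}x_{k_3}^{-1}\cdots x_{k_{2l}}^{+1}x_{k_1}^{-1}$, and deleting the tree generator $x_{s+1}^{\pm1}$ gives $r_k$.

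The step I expect to be the main obstacle is showing that exactly one of the $t+1$ relations is redundant, so that $r_1,\dots,r_t$ already present the group. I would argue that the facets $T_1,\dots,T_{t+1}$ assemble into the connected complex $K_2$ (all $T_k$ share the vertices $P_c$, $c\neq i,j$), so the attaching circles fit together into a global identity $\prod_k r_k\sim 1$; equivalently, once the higher-index handles needed to close up the closed manifold $M$ are attached, one of the $t+1$ two-handles becomes superfluous. Dropping $r_{t+1}$ then gives $\pi_1(M)\cong\langle x_1,\dots,x_s\mid r_1,\dots,r_t\rangle$ for $d\geq 3$. The case $d=2$ is the degenerate one: here $\Gamma_{ij}$ is connected ($t+1=1$), $K_2$ is a single point, $N_2$ is one $2$-cell, no relation is redundant, and we keep the single relator $r_1$, recovering the familiar one-relator presentation of a surface group. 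Throughout I would have to track base points and the orientation conventions behind the sign alternation carefully, and verify that the dual decomposition goes through verbatim for the pseudo-simplicial complex $\Kd{\Gamma}$.
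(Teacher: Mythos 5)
A preliminary remark: the paper contains no proof of this proposition at all --- it is stated as a quoted result, ``In \cite{ga79b}, Gagliardi proved the following'' --- so your proposal can only be measured against the classical argument, not against a proof in the paper. Your architecture is in fact that classical route: the join structure of the colored pseudo-triangulation gives $M=N_1\cup_F N_2$, where $N_1,N_2$ are regular neighbourhoods of the full subcomplexes $K_1$ (vertices $P_i,P_j$) and $K_2$ (the remaining $d-1$ vertices); the edges of $K_1$ correspond to the components $G_1,\dots,G_{s+1}$ and the $(d-2)$-simplices of $K_2$ to the cycles $H_1,\dots,H_{t+1}$; $\pi_1(N_1)$ is free of rank $s$ after collapsing one edge; the handles of $M$ dual to the simplices of $K_2$ have index $\geq 2$, so only the $2$-handles (dual to the $T_k$) contribute relations; and the attaching circle of the $2$-handle dual to $T_k$ traverses the edges of $K_1$ with alternating orientations, spelling $\tilde r_k$. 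Modulo routine checks (e.g.\ connectedness of $F$, needed for van Kampen), this part of your sketch is correct and yields $\pi_1(M)\cong\langle x_1,\dots,x_s\mid r_1,\dots,r_{t+1}\rangle$, as well as the $d=2$ case.

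The genuine gap is exactly the step you flag as ``the main obstacle'': for $d\geq 3$ the proposition asserts the presentation with only $t$ relations, and your justification for discarding $r_{t+1}$ --- that the $T_k$ all share the vertices $P_c$, so ``the attaching circles fit together into a global identity $\prod_k r_k\sim 1$'' --- is an assertion, not an argument, and the property you invoke (connectivity of $K_2$, shared vertices) cannot possibly suffice. What must be proved is a statement about normal closures in the free group: $r_{t+1}$ is a product of conjugates of $r_1^{\pm 1},\dots,r_t^{\pm 1}$, and this uses essentially that $M$ is a closed \emph{manifold}, i.e.\ that the link of each vertex $P_c$, $c\notin\{i,j\}$, is a $(d-1)$-sphere. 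The mechanism is visible already for $d=3$, where $M=N_1\cup_F N_2$ is a Heegaard splitting and the $t+1$ attaching circles are the meridians of the $t+1$ edges of the graph $K_2$: the boundary sphere of the ball of $N_2$ around $P_c$, punctured by the $t+1$ 1-handles, is a planar surface each of whose boundary circles is parallel in $F$ to one meridian, and the relation ``the product of (conjugates of) the boundary circles of a planar surface is trivial'' exhibits $c_{t+1}$ as a product of conjugates of $c_1^{\pm1},\dots,c_t^{\pm1}$ in $\pi_1(F)$; pushing this identity into $N_1$ gives the redundancy. For $d\geq 4$ one needs the analogous argument in the sphere $\lk{\mathcal{K}(\Gamma)}{P_c}$, where the meridian of $T_{t+1}$ must be shown to lie in the normal closure of the meridians of $T_1,\dots,T_t$ (using simple connectivity of the link sphere and general position for the codimension-2 dual spine). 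Until such an argument is supplied, your proof establishes only the presentation with $t+1$ relations; note that this missing step is precisely where $d\geq 3$ differs from $d=2$, in which the single relation is \emph{not} redundant.
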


\subsection{Regular Genus of PL $d$-manifolds}\label{sec:genus}

As already briefly recalled in Section 1, the notion of {\it regular genus} is strictly related to the existence of {\it regular embeddings} of crystallizations into closed surfaces, i.e., embeddings whose regions are bounded by the images of bi-colored cycles, with colors consecutive in a fixed permutation of the color set.
More precisely, according to \cite{ga81},  if $(\Gamma, \gamma)$ is a crystallization of an orientable (resp., non-orientable) PL $d$-manifold $M$ ($d \geq 2$), then for a fixed cyclic permutation  $\varepsilon:= (\varepsilon_0, \varepsilon_1, \varepsilon_2, \dots , \varepsilon_d)$ of $\Delta_d$,  a regular embedding $i_\varepsilon : \Gamma \hookrightarrow F_\varepsilon$ exists,  where $F_{\varepsilon}$ is the closed orientable (resp., non-orientable) surface with Euler characteristic
\begin{eqnarray}\label{relation_chi}
\chi_{\varepsilon}(\Gamma)= \sum_{i \in \mathbb{Z}_{d+1}}g_{\{\varepsilon_i,\varepsilon_{i+1}\}} + (1-d) \ \frac{\#V(\Gamma)}{2}.
\end{eqnarray}
In the orientable (resp., non-orientable) case, the integer
$$\rho_{\varepsilon}(\Gamma) = 1 - \chi_{\varepsilon}(\Gamma)/2$$
is equal to the genus (resp,. half the genus) of the surface $F_{\varepsilon}$.
Then, the regular genus $\rho (\Gamma)$ of $(\Gamma,\gamma)$  and the regular genus $\mathcal G (M)$ of $M$ are:
$$\rho(\Gamma)= \min \{\rho_{\varepsilon}(\Gamma) \ | \  \varepsilon \ \text{ is a cyclic permutation of } \ \Delta_d\};$$
$$\mathcal G(M) = \min \{\rho(\Gamma) \ | \  (\Gamma,\gamma) \mbox{ is a crystallization of } M\}.$$

From \cite{bc15}, we know the following.
\begin{proposition} \label{prop:lowerbound}
Let $M$ be a (closed connected) PL $4$-manifold with $rk(\pi_1(M))=m.$
Then,
$$ \mathit{k}(M) \ \ge \ 3 \chi (M) + 10m -6 \mbox{  and  } \mathcal G(M) \ \ge \ 2 \chi (M) + 5m -4.$$
\end{proposition}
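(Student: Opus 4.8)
The plan is to prove both inequalities simultaneously by bounding from below the combinatorial data of an \emph{arbitrary} crystallization $(\Gamma,\gamma)$ of $M$; this suffices since $\mathit k(M)$ and $\mathcal G(M)$ are minima over all crystallizations. Write the color set as $\Delta_4=\{0,1,2,3,4\}$, let $\Gamma$ have $2p$ vertices, and abbreviate by $g_{\{i,j\}}$ and $g_{\{i,j,k\}}$ the numbers of bicolored and tricolored residues. Three ingredients drive the argument: the face-count of the cell complex ${\mathcal K}(\Gamma)$, the surface structure of the $3$-colored subgraphs, and Gagliardi's presentation of $\pi_1(M)$ from Proposition \ref{prop:gagliardi79b}. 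I would first compute $\chi(M)$ face by face: contractedness forces one vertex per color, so $f_0=5$; regularity gives $f_4=2p$ and $f_3=5p$; and by the coloring duality $f_1=\sum_{|B|=3}g_B$ and $f_2=\sum_{|B|=2}g_B$ run over the ten tricolored and ten bicolored residue families. Hence
\[ \chi(M)=5-f_1+f_2-3p. \]

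Next, for each triple $\{i,j,k\}$ the subgraph $\Gamma_{\{i,j,k\}}$ is a $3$-colored graph on all $2p$ vertices, hence represents a disjoint union of $g_{\{i,j,k\}}$ closed surfaces; since each such surface has Euler characteristic at most $2$, summing the two-dimensional Euler formula over the components yields
\[ g_{\{i,j\}}+g_{\{j,k\}}+g_{\{i,k\}}-p\ \le\ 2\,g_{\{i,j,k\}}. \]
Adding this over all ten triples (each pair lies in three of them) gives the global inequality $3f_2\le 2f_1+10p$. Finally, applying Proposition \ref{prop:gagliardi79b} to the pair of colors complementary to a given triple shows that $\pi_1(M)$ admits a presentation on $g_{\{i,j,k\}}-1$ generators, so $g_{\{i,j,k\}}\ge m+1$ for every triple, and in particular $f_1\ge 10(m+1)$.

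The gem-complexity bound now falls out by elimination: solving the Euler formula for $f_2$ and substituting into $3f_2\le 2f_1+10p$ collapses to $p\ge f_1+3\chi(M)-15$, whence $f_1\ge 10(m+1)$ gives $p\ge 3\chi(M)+10m-5$, i.e.\ $\mathit k(M)=p-1\ge 3\chi(M)+10m-6$. For the regular genus I would localize the same idea at a fixed cyclic permutation $\varepsilon$. By (\ref{relation_chi}) one has $\rho_\varepsilon(\Gamma)=1+\tfrac{3p}{2}-\tfrac12\Sigma_\varepsilon$, where $\Sigma_\varepsilon$ sums the five $\varepsilon$-consecutive $g_{\{\varepsilon_i,\varepsilon_{i+1}\}}$. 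Splitting the ten pairs into five consecutive and five ``distance-two'' pairs (sum $\Sigma'_\varepsilon$), and the ten triples into five consecutive (sum $T_\varepsilon$) and five non-consecutive (sum $T'_\varepsilon$) ones, I would sum the surface inequality separately over each class of triples; the combination $2\times(\text{consecutive})+(\text{non-consecutive})$ produces $5\Sigma_\varepsilon+4\Sigma'_\varepsilon\le 15p+4T_\varepsilon+2T'_\varepsilon$. Re-expressing the target $\rho_\varepsilon(\Gamma)\ge 2\chi(M)+5m-4$ through the Euler formula reduces it exactly to $T'_\varepsilon\ge 5(m+1)$, which is the fundamental-group bound applied to the five non-consecutive triples; minimizing over $\varepsilon$ and over crystallizations then gives $\mathcal G(M)\ge 2\chi(M)+5m-4$.

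I expect the regular-genus half to be the main obstacle. The gem-complexity bound needs only the single global inequality $3f_2\le 2f_1+10p$, whereas the regular-genus bound depends on the finer incidence pattern of pairs and triples in $\mathbb Z_5$ and on selecting exactly the right linear combination of the two localized surface inequalities so that the constants in $2\chi(M)+5m-4$ emerge. Verifying those incidence counts — each consecutive pair lying in two consecutive triples and one non-consecutive triple, each distance-two pair in one and two respectively — is the delicate bookkeeping on which the sharp coefficients rest.
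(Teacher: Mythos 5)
Your argument is correct in every step, but be aware that there is nothing in this paper to compare it against: Proposition \ref{prop:lowerbound} is stated without proof, quoted as a known result from \cite{bc15}. Checking your proof on its own terms: the face counts $f_0=5$, $f_1=\sum_{|B|=3}g_B$, $f_2=\sum_{|B|=2}g_B$, $f_3=5p$, $f_4=2p$, hence $\chi(M)=5-f_1+f_2-3p$, are right; the inequality $g_{\{i,j\}}+g_{\{j,k\}}+g_{\{i,k\}}-p\le 2g_{\{i,j,k\}}$ is the correct consequence of each component of a $3$-residue being a closed surface of Euler characteristic at most $2$; and Proposition \ref{prop:gagliardi79b}, applied to the pair of colors complementary to a given triple, does give $g_{\{i,j,k\}}\ge m+1$, hence $f_1\ge 10(m+1)$. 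The elimination yielding $p\ge f_1+3\chi(M)-15\ge 3\chi(M)+10m-5$ is correct, as is the genus half: your incidence counts check out (each $\varepsilon$-consecutive pair lies in two consecutive triples and one non-consecutive triple, each distance-two pair in one and two respectively), so twice the consecutive-triple inequality plus the non-consecutive-triple inequality gives $5\Sigma_\varepsilon+4\Sigma'_\varepsilon\le 15p+4T_\varepsilon+2T'_\varepsilon$, and with $\rho_\varepsilon(\Gamma)=1+(3p-\Sigma_\varepsilon)/2$ and the Euler formula the target $\rho_\varepsilon(\Gamma)\ge 2\chi(M)+5m-4$ indeed reduces to $2T'_\varepsilon\ge 10(m+1)$, which is the rank bound applied to the five non-consecutive triples. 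Since both estimates hold for every crystallization and every cyclic permutation, passing to the minima defining $\mathit{k}(M)$ and $\mathcal G(M)$ is legitimate. As for the cited source: the proof in \cite{bc15} runs on the same three ingredients (the face count of $\mathcal K(\Gamma)$, the rank bound $g_{\{i,j,k\}}\ge m+1$ via Gagliardi's presentation, and the surface structure of the $3$-residues), so your reconstruction is essentially the intended argument rather than a genuinely different route; your explicit weighting of consecutive versus non-consecutive triples is a clean way of organizing the bookkeeping that the reference carries out through the regular genera of the residues.
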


For a closed connected 3-manifold $N$, if $M$ is a mapping torus of a (PL) homeomorphism $f:N\to N$, i.e., if $M$ is an $N$-bundle over $\mathbb{S}^1$ then $\chi(M)=0$. Therefore, $ \mathit{k}(M) \geq 10m -6$ and $\mathcal G(M)\geq 5m -4$.

\section{Proofs of the Results}
\begin{lemma} \label{lemma:1}
For $d \geq 1$, let $(\Gamma,\gamma)$ be a $2p$-vertex crystallization (with color set $\Delta_d$) of a closed connected $d$-manifold $M$. If there exist isomorphisms $I(\Gamma):=(I_V,I_c):\Gamma \to \Gamma$ with the property $I_c(i)=i+1$ for $i \in \Delta_d$ (addition is modulo $d+1$) then there exists a crystallization $(\bar \Gamma,\bar \gamma)$ of a  mapping torus $M_f$ of a (PL) homeomorphism $f:M\to M$ with $2(d+2)p$ vertices. 
\end{lemma}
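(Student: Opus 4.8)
The plan is to let the color-shifting automorphism itself supply both the homeomorphism $f$ and the extra $S^1$-direction of the bundle.

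First I would observe that the isomorphism $I=(I_V,I_c)$ induces a PL self-homeomorphism $f$ of $M$. Since $(\Gamma,\gamma)$ determines the colored $d$-simplices of $\mathcal{K}(\Gamma)$ and their face identifications, and since $I$ carries each color-$i$ edge to a color-$I_c(i)$ edge while preserving adjacency, it yields a simplicial isomorphism of $\mathcal{K}(\Gamma)$ onto itself — relabeling the vertex-colors of every $d$-simplex by the cyclic shift $i\mapsto i+1$. Its geometric realization $f:=|I|:M\to M$ is the PL homeomorphism whose mapping torus $M_f$ I aim to realize by a crystallization.

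Next I would build $(\bar\Gamma,\bar\gamma)$ on the vertex set $V(\Gamma)\times\mathbb{Z}_{d+2}$, so that $\#V(\bar\Gamma)=2(d+2)p$ automatically. The idea is to stack $d+2$ copies $\Gamma_0,\dots,\Gamma_{d+1}$ of $\Gamma$, one per element of $\mathbb{Z}_{d+2}$, coloring the $k$-th copy by the cyclic shift of $\gamma$ dictated by $I_c^{\,k}$, and to introduce the new color $d+1$ as the link between consecutive layers. Because the shift rotates which color is currently ``free'' to do the linking, the color $d+1$ together with this rotating link-color weaves all the copies together. A pleasant bookkeeping point that I expect to pin down the vertex count and the monodromy simultaneously: there are $d+2$ layers while $I_c$ has order $d+1$ on $\Delta_d$, so one full trip around the stack composes to the single shift $I_c$ — exactly the color action that induces $f$ on the fiber. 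That $(\bar\Gamma,\bar\gamma)$ is a $(d+2)$-regular graph properly edge-colored by $\Delta_{d+1}=\{0,\dots,d+1\}$ is then a direct check from the construction.

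It then remains to verify the two substantive properties. For contractedness I would show that $\bar\Gamma_{\Delta_{d+1}\setminus\{c\}}$ is connected for every color $c$; here the cyclic shift is essential, since it forces the inter-layer gluing to use each color in turn, so that deleting a single color never peels the stack apart into its layers — the connectivity of each $\Gamma_{\Delta_d\setminus\{c\}}$ (contractedness of $\Gamma$) then propagates across the layers. The main obstacle is the remaining identification $|\mathcal{K}(\bar\Gamma)|\cong M_f$: I would exhibit $\mathcal{K}(\bar\Gamma)$ as a triangulation of $M\times I$ in which each layer is a slice PL-homeomorphic to $M$ and consecutive slices cobound a collar, and then show that the round-trip identification built into the cyclic stacking is precisely $f$. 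Checking that these face identifications assemble into a genuine PL $(d+1)$-manifold fibering over $S^1$ with fiber $M$ and monodromy $f$ — rather than something merely homotopy equivalent — is the delicate, computation-heavy step. As sanity checks I would read off orientability from bipartiteness of $\bar\Gamma$ via Proposition \ref{preliminaries}, and confirm the fundamental group against the presentation furnished by Proposition \ref{prop:gagliardi79b}.
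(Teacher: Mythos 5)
Your overall strategy --- stack $d+2$ shifted copies coming from $\Gamma$, link consecutive layers so as to build $M\times I$, then close up into a torus --- is the same as the paper's, but two points in your construction are wrong or missing, and they are exactly the points the paper's proof turns on. First, the layers cannot be full copies of $\Gamma$. If every layer already uses $d+1$ colors at each vertex, then after adding the inter-layer edges (each vertex needs two of them, one to the layer above and one to the layer below) the degree becomes $d+3$; and if all linking edges carry the single new color $d+1$, the edge-coloring is not proper. In the paper each layer is a copy of $\Gamma_{\{0,1,\dots,d-1\}}$, i.e.\ $\Gamma$ with one color class deleted, so each vertex has degree $d$ inside its layer, the $k$-th layer's colors are cyclically shifted, and the linking colors rotate as well: color $d+1$ joins layers $0$ and $1$, color $k-2$ joins layers $k-1$ and $k$ for $2\leq k\leq d+1$, and color $d$ performs the final closing. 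Your phrase about the rotating ``free'' color gestures at this, but the construction you actually state (full copies of $\Gamma$, all links colored $d+1$) is not a $(d+2)$-colored graph at all.

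Second, and more serious: the inter-layer edges must be twisted by the vertex map $I_V$ --- the paper joins $I_V(x^{(j)}_{k-1})$ to $x^{(j)}_{k}$, not $x^{(j)}_{k-1}$ to $x^{(j)}_{k}$. This is where the hypothesis $I_c(i)=i+1$ is actually used: since $I$ carries color-$i$ edges to color-$(i+1)$ edges, the pairing $I_V(x^{(j)}_{k-1})\leftrightarrow x^{(j)}_{k}$ is a color-preserving isomorphism between the two boundary components being glued, and that is precisely what makes each slab a product $M\times I$ (formalized by the paper's sliding maps $T_k$) and what threads the monodromy into the complex; the resulting $f$ is a composition of the relabelling isomorphism with the product-structure identification, not simply $|I|$ declared in advance. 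In your proposal $I_V$ never enters the construction at all --- it is only used to define $f$ --- so the gluings between consecutively shifted layers are not color-compatible, and there is no reason the resulting complex is a PL manifold, much less $M\times I$ capped off into the mapping torus of $f$. Deferring this to a ``delicate, computation-heavy step'' omits the one idea that makes the lemma true, so as written the proposal has a genuine gap.
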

\begin{proof}
Let $I(\Gamma):=(I_V,I_c):\Gamma \to \Gamma$ be an isomorphism with the property $I_c(i)=i+1$ (addition is modulo $d+1$) for $i \in \Delta_d$. Let the vertices of the crystallization $(\Gamma,\gamma)$ be $x^{(0)},x^{(1)},\dots, x^{(2p-1)}$. Now, we construct a crystallization $(\bar \Gamma,\bar \gamma)$ of a  mapping torus $M_f$ of a (PL) homeomorphism $f:M\to M$ with $2(d+2)p$ vertices. First, we construct a $(d+2)$-colored graph $(\Gamma',\gamma')$ with boundary (with color set $\Delta_{d+1}$) by the following steps.

\begin{enumerate}[$(i)$]
\item For each $k \in \{0,1,\dots,d+1\}$, consider a new graph $\Gamma^{k}$ obtained from $\Gamma_{\{0,1,\dots,d-1\}}$ by renaming the vertices $x^{(j)}$ as $x^{(j)}_k$ for $0 \leq j \leq 2p-1$ and the colors $i$ as $i+k$ (addition is modulo $d+2$). Let $I_V(x^{(j)}_k)$ be the vertices in $\Gamma^{k}$ corresponding to the vertices $I_V(x^{(j)})$.

\item For each $k \in \{0,1,\dots,d\}$, join the vertices $I_V(x^{(j)}_k)$ and $x^{(j)}_{k+1}$ by an edge of color $k+d+1$ (addition is modulo $d+2$) for $0 \leq j \leq 2p-1$.
\end{enumerate}  

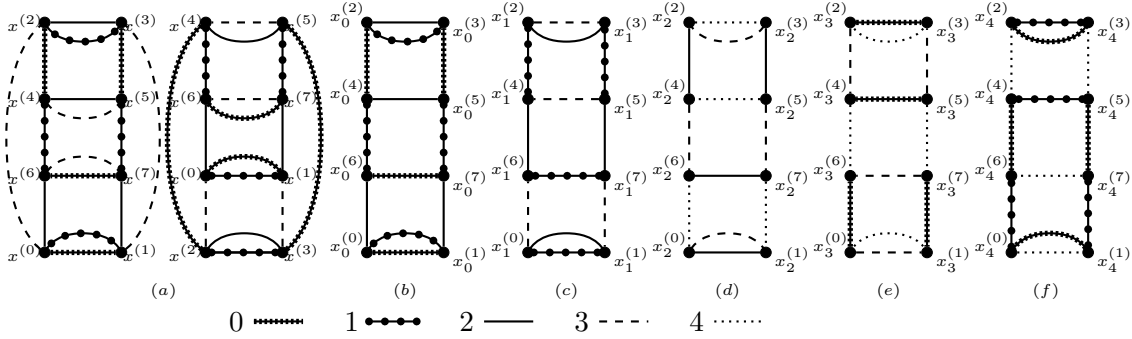
\begin{figure}[ht]
\tikzstyle{vert}=[circle, draw, fill=black!100, inner sep=0pt, minimum width=4pt]
\tikzstyle{vertex}=[circle, draw, fill=black!00, inner sep=0pt, minimum width=4pt]
\tikzstyle{ver}=[]
\tikzstyle{extra}=[circle, draw, fill=black!50, inner sep=0pt, minimum width=2pt]
\tikzstyle{edge} = [draw,thick,-]
\centering
\begin{tikzpicture}[scale=0.51]
\begin{scope}[shift={(0,0)}]
\foreach \w/\x/\y/\z in {-1.5/-1/3/0,1.5/1/3/1,1.5/1/1/2,-1.5/-1/1/3,-1.5/-1/-1/4,1.5/1/-1/5,1.5/1/-3/6,-1.5/-1/-3/7}{
\node[vert] (a\z) at (\x,\y){};
}
\foreach \w/\x/\y/\z in
{-1.5/-1/3/2,1.5/1/3/3,-1.5/1/1/4,1.5/-1/1/5,-1.5/-1/-1/6,1.5/1/-1/7,-1.5/1/-3/0,1.5/-1/-3/1}
{\node[ver] () at (\w,\y){\tiny{$x^{(\z)}$}};}

\foreach \x/\y in {a2/a1,a0/a3,a7/a6,a5/a4}{
\draw [line width=2pt, line cap=rectengle, dash pattern=on 1pt off 1]  (\x) -- (\y);
\draw [edge]  (\x) -- (\y);}

\foreach \x/\y in {a0/a1,a2/a3,a5/a6,a7/a4,a3/a4,a2/a5}{
\draw [edge]  (\x) -- (\y);}

\foreach \x/\y in {a3/a4,a2/a5}{
\draw [line width=3pt, line cap=round, dash pattern=on 0pt off 2\pgflinewidth]  (\x) -- (\y);}

\draw[line width=3pt, line cap=round, dash pattern=on 0pt off 2\pgflinewidth]  plot [smooth,tension=1.5] coordinates{(a0)(0,2.5)(a1)};
\draw[line width=3pt, line cap=round, dash pattern=on 0pt off 2\pgflinewidth]  plot [smooth,tension=1.5] coordinates{(a6)(0,-2.5)(a7)};
\draw[edge]  plot [smooth,tension=1.5] coordinates{(a0)(0,2.5)(a1)};
\draw[edge]  plot [smooth,tension=1.5] coordinates{(a6)(0,-2.5)(a7)};
\draw[edge, dashed]  plot [smooth,tension=1.5] coordinates{(a2)(0,0.5)(a3)};
\draw[edge, dashed]  plot [smooth,tension=1.5] coordinates{(a5)(0,-0.5)(a4)};
\draw[edge, dashed]  plot [smooth,tension=1.5] coordinates{(a0)(-2,0)(a7)};
\draw[edge, dashed]  plot [smooth,tension=1.5] coordinates{(a1)(2,0)(a6)};
\end{scope}

\begin{scope}[shift={(4.2,0)}]
\foreach \w/\x/\y/\z in {-1.5/-1/3/3,1.5/1/3/2,1.5/1/1/5,-1.5/-1/1/4,-1.5/-1/-1/7,1.5/1/-1/6,1.5/1/-3/1,-1.5/-1/-3/0}{
\node[vert] (a\z) at (\x,\y){};}
\foreach \w/\x/\y/\z in
{-1.5/-1/3/4,1.5/1/3/5,-1.5/1/1/6,1.5/-1/1/7,-1.5/-1/-1/0,1.5/1/-1/1,
-1.5/1/-3/2,1.5/-1/-3/3}
{\node[ver] () at (\w,\y){\tiny{$x^{(\z)}$}};}

\foreach \x/\y in {a7/a6,a0/a1,a3/a4,a2/a5}{
\draw [line width=3pt, line cap=round, dash pattern=on 0pt off 2\pgflinewidth]  (\x) -- (\y);
\draw [edge]  (\x) -- (\y);}

\foreach \x/\y in {a0/a1,a5/a6,a7/a4}{
\draw [edge]  (\x) -- (\y);}

\foreach \x/\y in {a0/a7,a1/a6,a2/a3,a5/a4}{
\draw [edge, dashed]  (\x) -- (\y);}

\draw[edge]  plot [smooth,tension=1.5] coordinates{(a0)(0,-2.5)(a1)};
\draw[edge]  plot [smooth,tension=1.5] coordinates{(a3)(0,2.5)(a2)};
\draw[edge]  plot [smooth,tension=1.5] coordinates{(a4)(0,0.5)(a5)};
\draw[edge]  plot [smooth,tension=1.5] coordinates{(a6)(0,-0.5)(a7)};
\draw[line width=2pt, line cap=rectengle, dash pattern=on 1pt off 1]   plot [smooth,tension=1.5] coordinates{(a4)(0,0.5)(a5)};
\draw[line width=2pt, line cap=rectengle, dash pattern=on 1pt off 1]   plot [smooth,tension=1.5] coordinates{(a6)(0,-0.5)(a7)};
\draw[edge]  plot [smooth,tension=1.5] coordinates{(a0)(-2,0)(a3)};
\draw[edge]  plot [smooth,tension=1.5] coordinates{(a1)(2,0)(a2)};
\draw[line width=2pt, line cap=rectengle, dash pattern=on 1pt off 1] plot [smooth,tension=1.5] coordinates{(a0)(-2,0)(a3)};
\draw[line width=2pt, line cap=rectengle, dash pattern=on 1pt off 1]  plot [smooth,tension=1.5] coordinates{(a1)(2,0)(a2)};
\end{scope}

\begin{scope}[shift={(8.4,0)}]
\foreach \w/\x/\y/\z in {-1.5/-1/3/0,1.5/1/3/1,1.5/1/1/2,-1.5/-1/1/3,-1.5/-1/-1/4,1.5/1/-1/5,1.5/1/-3/6,-1.5/-1/-3/7}{
\node[vert] (a\z) at (\x,\y){};}

\node[ver] () at (-1.5,3.2){\tiny{$ x_{0}^{(2)}$}};
\node[ver] () at (1.7,2.8){\tiny{$ x_{0}^{(3)}$}};
\node[ver] () at (-1.5,1.2){\tiny{$ x_{0}^{(4)}$}};
\node[ver] () at (1.7,.8){\tiny{$ x_{0}^{(5)}$}};
\node[ver] () at (-1.5,-.8){\tiny{$ x_{0}^{(6)}$}};
\node[ver] () at (1.7,-1.2){\tiny{$ x_{0}^{(7)}$}};
\node[ver] () at (-1.5,-2.8){\tiny{$ x_{0}^{(0)}$}};
\node[ver] () at (1.7,-3.2){\tiny{$ x_{0}^{(1)}$}};

\foreach \x/\y in {a2/a1,a0/a3,a7/a6,a5/a4}{
\draw [line width=2pt, line cap=rectengle, dash pattern=on 1pt off 1]  (\x) -- (\y);
\draw [edge]  (\x) -- (\y);}

\foreach \x/\y in {a0/a1,a2/a3,a5/a6,a7/a4,a3/a4,a2/a5}{
\draw [edge]  (\x) -- (\y);}

\foreach \x/\y in {a3/a4,a2/a5}{
\draw [line width=3pt, line cap=round, dash pattern=on 0pt off 2\pgflinewidth]  (\x) -- (\y);}

\draw[line width=3pt, line cap=round, dash pattern=on 0pt off 2\pgflinewidth]  plot [smooth,tension=1.5] coordinates{(a0)(0,2.5)(a1)};
\draw[line width=3pt, line cap=round, dash pattern=on 0pt off 2\pgflinewidth]  plot [smooth,tension=1.5] coordinates{(a6)(0,-2.5)(a7)};
\draw[edge]  plot [smooth,tension=1.5] coordinates{(a0)(0,2.5)(a1)};
\draw[edge]  plot [smooth,tension=1.5] coordinates{(a6)(0,-2.5)(a7)};

\end{scope}

\begin{scope}[shift={(12.6,0)}]
\foreach \w/\x/\y/\z in {-1.5/-1/3/0,1.5/1/3/1,1.5/1/1/2,-1.5/-1/1/3,-1.5/-1/-1/4,1.5/1/-1/5,1.5/1/-3/6,-1.5/-1/-3/7}{
\node[vert] (b\z) at (\x,\y){};}

\node[ver] () at (-1.5,3.2){\tiny{$ x_{1}^{(2)}$}};
\node[ver] () at (1.7,2.8){\tiny{$ x_{1}^{(3)}$}};
\node[ver] () at (-1.5,1.2){\tiny{$ x_{1}^{(4)}$}};
\node[ver] () at (1.7,.8){\tiny{$ x_{1}^{(5)}$}};
\node[ver] () at (-1.5,-.8){\tiny{$ x_{1}^{(6)}$}};
\node[ver] () at (1.7,-1.2){\tiny{$ x_{1}^{(7)}$}};
\node[ver] () at (-1.5,-2.8){\tiny{$ x_{1}^{(0)}$}};
\node[ver] () at (1.7,-3.2){\tiny{$ x_{1}^{(1)}$}};

\foreach \x/\y in {b2/b1,b0/b3,b7/b6,b5/b4}{
\draw [line width=3pt, line cap=round, dash pattern=on 0pt off 2\pgflinewidth]  (\x) -- (\y);
\draw [edge]  (\x) -- (\y);}

\foreach \x/\y in {b3/b4,b2/b5}{
\draw [edge]  (\x) -- (\y);}

\foreach \x/\y in {b0/b1,b2/b3,b5/b6,b7/b4}{
\draw [edge, dashed]  (\x) -- (\y);}

\draw[edge]  plot [smooth,tension=1.5] coordinates{(b0)(0,2.5)(b1)};
\draw[edge]  plot [smooth,tension=1.5] coordinates{(b6)(0,-2.5)(b7)};
\end{scope}

\begin{scope}[shift={(16.8,0)}]
\foreach \w/\x/\y/\z in {-1.5/-1/3/0,1.5/1/3/1,1.5/1/1/2,-1.5/-1/1/3,-1.5/-1/-1/4,1.5/1/-1/5,1.5/1/-3/6,-1.5/-1/-3/7}{
\node[vert] (c\z) at (\x,\y){};}
\node[ver] () at (-1.5,3.2){\tiny{$ x_{2}^{(2)}$}};
\node[ver] () at (1.7,2.8){\tiny{$ x_{2}^{(3)}$}};
\node[ver] () at (-1.5,1.2){\tiny{$ x_{2}^{(4)}$}};
\node[ver] () at (1.7,.8){\tiny{$ x_{2}^{(5)}$}};
\node[ver] () at (-1.5,-.8){\tiny{$ x_{2}^{(6)}$}};
\node[ver] () at (1.7,-1.2){\tiny{$ x_{2}^{(7)}$}};
\node[ver] () at (-1.5,-2.8){\tiny{$ x_{2}^{(0)}$}};
\node[ver] () at (1.7,-3.2){\tiny{$ x_{2}^{(1)}$}};
\foreach \x/\y in {c0/c3,c1/c2,c5/c4,c6/c7}{
\draw [edge]  (\x) -- (\y);}

\foreach \x/\y in {c0/c1,c2/c3,c5/c6,c7/c4}{
\draw [edge, dotted]  (\x) -- (\y);}

\foreach \x/\y in {c3/c4,c2/c5}{
\draw [edge, dashed]  (\x) -- (\y);}

\draw[edge, dashed] plot [smooth,tension=1.5] coordinates{(c0)(0,2.5)(c1)};
\draw[edge, dashed]  plot [smooth,tension=1.5] coordinates{(c6)(0,-2.5)(c7)};
\end{scope}

\begin{scope}[shift={(21,0)}]
\foreach \w/\x/\y/\z in {-1.5/-1/3/0,1.5/1/3/1,1.5/1/1/2,-1.5/-1/1/3,-1.5/-1/-1/4,1.5/1/-1/5,1.5/1/-3/6,-1.5/-1/-3/7}{
\node[vert] (d\z) at (\x,\y){};}
\node[ver] () at (-1.5,3.2){\tiny{$ x_{3}^{(2)}$}};
\node[ver] () at (1.7,2.8){\tiny{$ x_{3}^{(3)}$}};
\node[ver] () at (-1.5,1.2){\tiny{$ x_{3}^{(4)}$}};
\node[ver] () at (1.7,.8){\tiny{$ x_{3}^{(5)}$}};
\node[ver] () at (-1.5,-.8){\tiny{$ x_{3}^{(6)}$}};
\node[ver] () at (1.7,-1.2){\tiny{$ x_{3}^{(7)}$}};
\node[ver] () at (-1.5,-2.8){\tiny{$ x_{3}^{(0)}$}};
\node[ver] () at (1.7,-3.2){\tiny{$ x_{3}^{(1)}$}};
\foreach \x/\y in {d2/d1,d0/d3,d7/d6,d5/d4}{\draw [edge, dashed]  (\x) -- (\y);}

\foreach \x/\y in {d0/d1,d2/d3,d5/d6,d7/d4}{
\draw [edge]  (\x) -- (\y);
\draw [line width=2pt, line cap=rectengle, dash pattern=on 1pt off 1]  (\x) -- (\y);
}

\foreach \x/\y in {d3/d4,d2/d5}{
\draw [edge, dotted]  (\x) -- (\y);}

\draw[edge, dotted]  plot [smooth,tension=1.5] coordinates{(d0)(0,2.5)(d1)};
\draw[edge, dotted]  plot [smooth,tension=1.5] coordinates{(d6)(0,-2.5)(d7)};
\end{scope}

\begin{scope}[shift={(25.2,0)}]
\foreach \w/\x/\y/\z in {-1.5/-1/3/0,1.5/1/3/1,1.5/1/1/2,-1.5/-1/1/3,-1.5/-1/-1/4,1.5/1/-1/5,1.5/1/-3/6,-1.5/-1/-3/7}{
\node[vert] (e\z) at (\x,\y){};}
\node[ver] () at (-1.5,3.2){\tiny{$ x_{4}^{(2)}$}};
\node[ver] () at (1.7,2.8){\tiny{$ x_{4}^{(3)}$}};
\node[ver] () at (-1.5,1.2){\tiny{$ x_{4}^{(4)}$}};
\node[ver] () at (1.7,.8){\tiny{$ x_{4}^{(5)}$}};
\node[ver] () at (-1.5,-.8){\tiny{$ x_{4}^{(6)}$}};
\node[ver] () at (1.7,-1.2){\tiny{$ x_{4}^{(7)}$}};
\node[ver] () at (-1.5,-2.8){\tiny{$ x_{4}^{(0)}$}};
\node[ver] () at (1.7,-3.2){\tiny{$ x_{4}^{(1)}$}};
\foreach \x/\y in {e2/e1,e0/e3,e7/e6,e5/e4}{\draw [edge, dotted]  (\x) -- (\y);}
\foreach \x/\y in {e0/e1,e2/e3,e5/e6,e7/e4}{
\draw [edge]  (\x) -- (\y);
\draw [line width=3pt, line cap=round, dash pattern=on 0pt off 2\pgflinewidth]  (\x) -- (\y);
}

\foreach \x/\y in {e3/e4,e2/e5}{
\draw [edge]  (\x) -- (\y);}

\foreach \x/\y in {e3/e4,e2/e5}{
\draw [line width=2pt, line cap=rectengle, dash pattern=on 1pt off 1]  (\x) -- (\y);
}

\draw[line width=2pt, line cap=rectengle, dash pattern=on 1pt off 1]  plot [smooth,tension=1.5] coordinates{(e0)(0,2.5)(e1)};
\draw[line width=2pt, line cap=rectengle, dash pattern=on 1pt off 1]  plot [smooth,tension=1.5] coordinates{(e6)(0,-2.5)(e7)};
\draw[edge]  plot [smooth,tension=1.5] coordinates{(e0)(0,2.5)(e1)};
\draw[edge]  plot [smooth,tension=1.5] coordinates{(e6)(0,-2.5)(e7)};
\end{scope}

\begin{scope}[shift={(8,0.2)}]
\node[ver] (308) at (-4,-5){$0$};
\node[ver] (300) at (-1,-5){$1$};
\node[ver] (301) at (2,-5){$2$};
\node[ver] (302) at (5,-5){$3$};
\node[ver] (303) at (8,-5){$4$};
\node[ver] (309) at (-2,-5){};
\node[ver] (304) at (1,-5){};
\node[ver] (305) at (4,-5){};
\node[ver] (306) at (7,-5){};
\node[ver] (307) at (10,-5){};
\path[edge] (300) -- (304);
\path[edge] (308) -- (309);
\draw [line width=2pt, line cap=rectengle, dash pattern=on 1pt off 1]  (308) -- (309);
\draw [line width=3pt, line cap=round, dash pattern=on 0pt off 2\pgflinewidth]  (300) -- (304);
\path[edge] (301) -- (305);
\path[edge, dashed] (302) -- (306);
\path[edge, dotted] (303) -- (307);
\end{scope}

\node[ver] () at (2.1,-4){\tiny{$(a)$}};
\node[ver] () at (8.4,-4){\tiny{$(b)$ }};
\node[ver] () at (12.6,-4){\tiny{$(c)$ }};
\node[ver] () at (16.8,-4){\tiny{$(d)$ }};
\node[ver] () at (21,-4){\tiny{$(e)$ }};
\node[ver] () at (25.1,-4){\tiny{$(f)$ }};

\end{tikzpicture}
\caption{$(a)$ A crystallization $(\Gamma,\gamma)$ of $\mathbb{S}^2 \times \mathbb{S}^1$ and an isomorphism $I(\Gamma):\Gamma \to \Gamma$ such that $I_V(x^{(p)})=x^{(p+2)}$ (addition is modulo $8$) and $I_c(i)=i+1$ (addition is modulo $4$), $(b)~ \Gamma^{0}$, $(c)~ \Gamma^{1}$, $(d)~ \Gamma^{2}$, $(e)~ \Gamma^{3}$, $(f)~ \Gamma^{4}$.} \label{fig:1}
\end{figure}

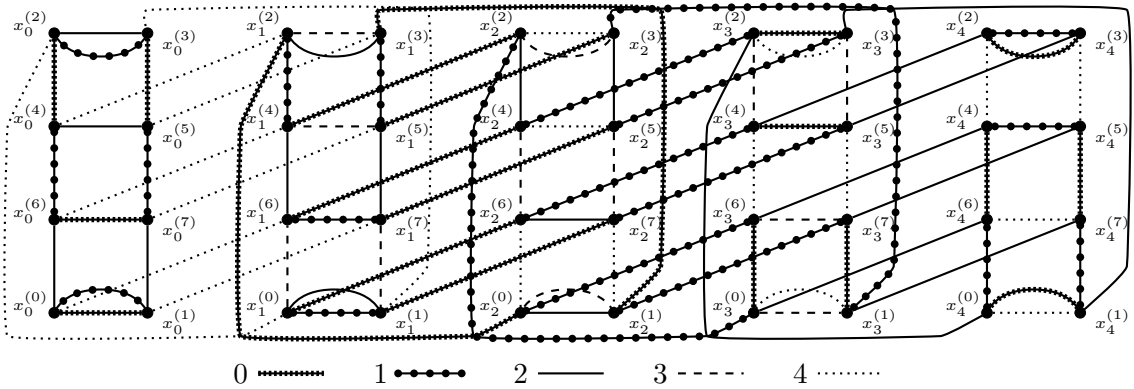
\begin{figure}[ht]
\tikzstyle{vert}=[circle, draw, fill=black!100, inner sep=0pt, minimum width=4pt]
\tikzstyle{vertex}=[circle, draw, fill=black!00, inner sep=0pt, minimum width=4pt]
\tikzstyle{ver}=[]
\tikzstyle{extra}=[circle, draw, fill=black!50, inner sep=0pt, minimum width=2pt]
\tikzstyle{edge} = [draw,thick,-]

\centering
\begin{tikzpicture}[scale=0.62]
\begin{scope}[shift={(0,0)}]
\foreach \w/\x/\y/\z in {-1.5/-1/3/0,1.5/1/3/1,1.5/1/1/2,-1.5/-1/1/3,-1.5/-1/-1/4,1.5/1/-1/5,1.5/1/-3/6,-1.5/-1/-3/7}{
\node[vert] (a\z) at (\x,\y){};}

\node[ver] () at (-1.5,3.2){\tiny{$ x_{0}^{(2)}$}};
\node[ver] () at (1.7,2.8){\tiny{$ x_{0}^{(3)}$}};
\node[ver] () at (-1.5,1.2){\tiny{$ x_{0}^{(4)}$}};
\node[ver] () at (1.7,.8){\tiny{$ x_{0}^{(5)}$}};
\node[ver] () at (-1.5,-.8){\tiny{$ x_{0}^{(6)}$}};
\node[ver] () at (1.7,-1.2){\tiny{$ x_{0}^{(7)}$}};
\node[ver] () at (-1.5,-2.8){\tiny{$ x_{0}^{(0)}$}};
\node[ver] () at (1.7,-3.2){\tiny{$ x_{0}^{(1)}$}};

\foreach \x/\y in {a2/a1,a0/a3,a7/a6,a5/a4}{
\draw [line width=2pt, line cap=rectengle, dash pattern=on 1pt off 1]  (\x) -- (\y);
\draw [edge]  (\x) -- (\y);}

\foreach \x/\y in {a0/a1,a2/a3,a5/a6,a7/a4,a3/a4,a2/a5}{
\draw [edge]  (\x) -- (\y);}

\foreach \x/\y in {a3/a4,a2/a5}{
\draw [line width=3pt, line cap=round, dash pattern=on 0pt off 2\pgflinewidth]  (\x) -- (\y);}

\draw[line width=3pt, line cap=round, dash pattern=on 0pt off 2\pgflinewidth]  plot [smooth,tension=1.5] coordinates{(a0)(0,2.5)(a1)};
\draw[line width=3pt, line cap=round, dash pattern=on 0pt off 2\pgflinewidth]  plot [smooth,tension=1.5] coordinates{(a6)(0,-2.5)(a7)};
\draw[edge]  plot [smooth,tension=1.5] coordinates{(a0)(0,2.5)(a1)};
\draw[edge]  plot [smooth,tension=1.5] coordinates{(a6)(0,-2.5)(a7)};

\end{scope}

\begin{scope}[shift={(5,0)}]
\foreach \w/\x/\y/\z in {-1.5/-1/3/0,1.5/1/3/1,1.5/1/1/2,-1.5/-1/1/3,-1.5/-1/-1/4,1.5/1/-1/5,1.5/1/-3/6,-1.5/-1/-3/7}{
\node[vert] (b\z) at (\x,\y){};}

\node[ver] () at (-1.5,3.2){\tiny{$ x_{1}^{(2)}$}};
\node[ver] () at (1.7,2.8){\tiny{$ x_{1}^{(3)}$}};
\node[ver] () at (-1.5,1.2){\tiny{$ x_{1}^{(4)}$}};
\node[ver] () at (1.7,.8){\tiny{$ x_{1}^{(5)}$}};
\node[ver] () at (-1.5,-.8){\tiny{$ x_{1}^{(6)}$}};
\node[ver] () at (1.7,-1.2){\tiny{$ x_{1}^{(7)}$}};
\node[ver] () at (-1.5,-2.8){\tiny{$ x_{1}^{(0)}$}};
\node[ver] () at (1.7,-3.2){\tiny{$ x_{1}^{(1)}$}};

\foreach \x/\y in {b2/b1,b0/b3,b7/b6,b5/b4}{
\draw [line width=3pt, line cap=round, dash pattern=on 0pt off 2\pgflinewidth]  (\x) -- (\y);
\draw [edge]  (\x) -- (\y);}

\foreach \x/\y in {b3/b4,b2/b5}{
\draw [edge]  (\x) -- (\y);}

\foreach \x/\y in {b0/b1,b2/b3,b5/b6,b7/b4}{
\draw [edge, dashed]  (\x) -- (\y);}

\draw[edge]  plot [smooth,tension=1.5] coordinates{(b0)(0,2.5)(b1)};
\draw[edge]  plot [smooth,tension=1.5] coordinates{(b6)(0,-2.5)(b7)};
\end{scope}

\begin{scope}[shift={(10,0)}]
\foreach \w/\x/\y/\z in {-1.5/-1/3/0,1.5/1/3/1,1.5/1/1/2,-1.5/-1/1/3,-1.5/-1/-1/4,1.5/1/-1/5,1.5/1/-3/6,-1.5/-1/-3/7}{
\node[vert] (c\z) at (\x,\y){};}
\node[ver] () at (-1.5,3.2){\tiny{$ x_{2}^{(2)}$}};
\node[ver] () at (1.7,2.8){\tiny{$ x_{2}^{(3)}$}};
\node[ver] () at (-1.5,1.2){\tiny{$ x_{2}^{(4)}$}};
\node[ver] () at (1.7,.8){\tiny{$ x_{2}^{(5)}$}};
\node[ver] () at (-1.5,-.8){\tiny{$ x_{2}^{(6)}$}};
\node[ver] () at (1.7,-1.2){\tiny{$ x_{2}^{(7)}$}};
\node[ver] () at (-1.5,-2.8){\tiny{$ x_{2}^{(0)}$}};
\node[ver] () at (1.7,-3.2){\tiny{$ x_{2}^{(1)}$}};
\foreach \x/\y in {c0/c3,c1/c2,c5/c4,c6/c7}{
\draw [edge]  (\x) -- (\y);}

\foreach \x/\y in {c0/c1,c2/c3,c5/c6,c7/c4}{
\draw [edge, dotted]  (\x) -- (\y);}

\foreach \x/\y in {c3/c4,c2/c5}{
\draw [edge, dashed]  (\x) -- (\y);}

\draw[edge, dashed] plot [smooth,tension=1.5] coordinates{(c0)(0,2.5)(c1)};
\draw[edge, dashed]  plot [smooth,tension=1.5] coordinates{(c6)(0,-2.5)(c7)};
\end{scope}

\begin{scope}[shift={(15,0)}]
\foreach \w/\x/\y/\z in {-1.5/-1/3/0,1.5/1/3/1,1.5/1/1/2,-1.5/-1/1/3,-1.5/-1/-1/4,1.5/1/-1/5,1.5/1/-3/6,-1.5/-1/-3/7}{
\node[vert] (d\z) at (\x,\y){};}
\node[ver] () at (-1.5,3.2){\tiny{$ x_{3}^{(2)}$}};
\node[ver] () at (1.7,2.8){\tiny{$ x_{3}^{(3)}$}};
\node[ver] () at (-1.5,1.2){\tiny{$ x_{3}^{(4)}$}};
\node[ver] () at (1.7,.8){\tiny{$ x_{3}^{(5)}$}};
\node[ver] () at (-1.5,-.8){\tiny{$ x_{3}^{(6)}$}};
\node[ver] () at (1.7,-1.2){\tiny{$ x_{3}^{(7)}$}};
\node[ver] () at (-1.5,-2.8){\tiny{$ x_{3}^{(0)}$}};
\node[ver] () at (1.7,-3.2){\tiny{$ x_{3}^{(1)}$}};
\foreach \x/\y in {d2/d1,d0/d3,d7/d6,d5/d4}{\draw [edge, dashed]  (\x) -- (\y);}

\foreach \x/\y in {d0/d1,d2/d3,d5/d6,d7/d4}{
\draw [edge]  (\x) -- (\y);
\draw [line width=2pt, line cap=rectengle, dash pattern=on 1pt off 1]  (\x) -- (\y);
}

\foreach \x/\y in {d3/d4,d2/d5}{
\draw [edge, dotted]  (\x) -- (\y);}

\draw[edge, dotted]  plot [smooth,tension=1.5] coordinates{(d0)(0,2.5)(d1)};
\draw[edge, dotted]  plot [smooth,tension=1.5] coordinates{(d6)(0,-2.5)(d7)};
\end{scope}

\begin{scope}[shift={(20,0)}]
\foreach \w/\x/\y/\z in {-1.5/-1/3/0,1.5/1/3/1,1.5/1/1/2,-1.5/-1/1/3,-1.5/-1/-1/4,1.5/1/-1/5,1.5/1/-3/6,-1.5/-1/-3/7}{
\node[vert] (e\z) at (\x,\y){};}
\node[ver] () at (-1.5,3.2){\tiny{$ x_{4}^{(2)}$}};
\node[ver] () at (1.7,2.8){\tiny{$ x_{4}^{(3)}$}};
\node[ver] () at (-1.5,1.2){\tiny{$ x_{4}^{(4)}$}};
\node[ver] () at (1.7,.8){\tiny{$ x_{4}^{(5)}$}};
\node[ver] () at (-1.5,-.8){\tiny{$ x_{4}^{(6)}$}};
\node[ver] () at (1.7,-1.2){\tiny{$ x_{4}^{(7)}$}};
\node[ver] () at (-1.5,-2.8){\tiny{$ x_{4}^{(0)}$}};
\node[ver] () at (1.7,-3.2){\tiny{$ x_{4}^{(1)}$}};
\foreach \x/\y in {e2/e1,e0/e3,e7/e6,e5/e4}{\draw [edge, dotted]  (\x) -- (\y);}
\foreach \x/\y in {e0/e1,e2/e3,e5/e6,e7/e4}{
\draw [edge]  (\x) -- (\y);
\draw [line width=3pt, line cap=round, dash pattern=on 0pt off 2\pgflinewidth]  (\x) -- (\y);
}

\foreach \x/\y in {e3/e4,e2/e5}{
\draw [edge]  (\x) -- (\y);}

\foreach \x/\y in {e3/e4,e2/e5}{
\draw [line width=2pt, line cap=rectengle, dash pattern=on 1pt off 1]  (\x) -- (\y);
}

\draw[line width=2pt, line cap=rectengle, dash pattern=on 1pt off 1]  plot [smooth,tension=1.5] coordinates{(e0)(0,2.5)(e1)};
\draw[line width=2pt, line cap=rectengle, dash pattern=on 1pt off 1]  plot [smooth,tension=1.5] coordinates{(e6)(0,-2.5)(e7)};
\draw[edge]  plot [smooth,tension=1.5] coordinates{(e0)(0,2.5)(e1)};
\draw[edge]  plot [smooth,tension=1.5] coordinates{(e6)(0,-2.5)(e7)};
\end{scope}

\foreach \x/\y in {a2/b1,a3/b0,a5/b2,a4/b3,a7/b4,a6/b5}{
\draw [edge, dotted]  (\x) -- (\y);}
\draw[edge, dotted]  plot [smooth,tension=0.1] coordinates{(a0)(-2,1)(-2,-3.5)(3,-3.5)(b7)};
\draw[edge, dotted]  plot [smooth,tension=0.1] coordinates{(a1)(1,3.5)(7,3.5)(7,-2)(b6)};

\foreach \x/\y in {b2/c1,b3/c0,b5/c2,b4/c3,b7/c4,b6/c5}{
\draw [edge]  (\x) -- (\y);
\draw [line width=2pt, line cap=rectengle, dash pattern=on 1pt off 1]  (\x) -- (\y);
}
\draw[edge]  plot [smooth,tension=0.1] coordinates{(b0)(3,1)(3,-3.5)(8,-3.5)(c7)};
\draw[edge]  plot [smooth,tension=0.1] coordinates{(b1)(6,3.5)(12,3.5)(12,-2)(c6)};
\draw[line width=2pt, line cap=rectengle, dash pattern=on 1pt off 1]  plot [smooth,tension=0.1] coordinates{(b0)(3,1)(3,-3.5)(8,-3.5)(c7)};
\draw[line width=2pt, line cap=rectengle, dash pattern=on 1pt off 1]  plot [smooth,tension=0.1] coordinates{(b1)(6,3.5)(12,3.5)(12,-2)(c6)};

\foreach \x/\y in {c2/d1,c3/d0,c5/d2,c4/d3,c7/d4,c6/d5}{
\draw [edge]  (\x) -- (\y);
\draw [line width=3pt, line cap=round, dash pattern=on 0pt off 2\pgflinewidth]  (\x) -- (\y);
}
\draw[edge]  plot [smooth,tension=0.1] coordinates{(c0)(8,1)(8,-3.5)(13,-3.5)(d7)};
\draw[edge]  plot [smooth,tension=0.1] coordinates{(c1)(11,3.5)(17,3.5)(17,-2)(d6)};
\draw[line width=3pt, line cap=round, dash pattern=on 0pt off 2\pgflinewidth]  plot [smooth,tension=0.1] coordinates{(c0)(8,1)(8,-3.5)(13,-3.5)(d7)};
\draw[line width=3pt, line cap=round, dash pattern=on 0pt off 2\pgflinewidth]  plot [smooth,tension=0.1] coordinates{(c1)(11,3.5)(17,3.5)(17,-2)(d6)};

\foreach \x/\y in {d2/e1,d3/e0,d5/e2,d4/e3,d7/e4,d6/e5}{
\draw [edge]  (\x) -- (\y);}
\draw[edge]  plot [smooth,tension=0.1] coordinates{(d0)(13,1)(13,-3.5)(18,-3.5)(e7)};
\draw[edge]  plot [smooth,tension=0.1] coordinates{(d1)(16,3.5)(22,3.5)(22,-2)(e6)};

\begin{scope}[shift={(7,0.7)}]
\node[ver] (308) at (-4,-5){$0$};
\node[ver] (300) at (-1,-5){$1$};
\node[ver] (301) at (2,-5){$2$};
\node[ver] (302) at (5,-5){$3$};
\node[ver] (303) at (8,-5){$4$};
\node[ver] (309) at (-2,-5){};
\node[ver] (304) at (1,-5){};
\node[ver] (305) at (4,-5){};
\node[ver] (306) at (7,-5){};
\node[ver] (307) at (10,-5){};
\path[edge] (300) -- (304);
\path[edge] (308) -- (309);
\draw [line width=2pt, line cap=rectengle, dash pattern=on 1pt off 1]  (308) -- (309);
\draw [line width=3pt, line cap=round, dash pattern=on 0pt off 2\pgflinewidth]  (300) -- (304);
\path[edge] (301) -- (305);
\path[edge, dashed] (302) -- (306);
\path[edge, dotted] (303) -- (307);
\end{scope}
\end{tikzpicture}
\caption{The $5$-colored graph $(\Gamma',\gamma')$ with boundary corresponding to the crystallization $(\Gamma,\gamma)$ and the  isomorphism $I(\Gamma):\Gamma \to \Gamma$ defined as in Figure \ref{fig:1}.}\label{fig:2}
\end{figure}

\noindent Then $(\Gamma',\gamma')$  is a $(d+2)$-colored graph with boundary, which is regular with respect to the color $d$. The boundary graph $(\partial \Gamma',\partial \gamma')$ has two isomorphic components with the color set $\Delta_{d+1}\setminus\{d\}$, and each component is a crystallization of $M$. Let $K_1$ and $K_2$ be the components of $(\partial \Gamma',\partial \gamma')$ with the vertex sets $\{x^{(0)}_0,x^{(1)}_{0},\dots, x^{(2p-1)}_{0}\}$ and $\{x^{(0)}_{d+1},x^{(1)}_{d+1},\dots, x^{(2p-1)}_{d+1}\}$ respectively. Then, $|\mathcal{K}(K_1)|\cong |\mathcal{K}(K_2)| \cong M$.

\medskip

\noindent {\em Claim:} we claim that the $(d+2)$-colored graph $(\Gamma',\gamma')$ with boundary represents $M\times [0,1]$.

\smallskip

Let ${\mathcal K}(\Gamma')$ be the $(d+1)$-dimensional simplicial cell-complex corresponding to the  $(d+2)$-colored graph $(\Gamma',\gamma')$ with boundary. It is easy to see that
${\mathcal K}(\Gamma')$ has exactly $2d+3$ vertices - there is a unique vertex labeled by the color $d$ but for each color $i\in \Delta_{d+1}\setminus \{d\}$, there are exactly two vertices labeled by the color $i$. Recall that $\sigma(v)$ denotes the $(d+1)$-simplex in ${\mathcal K}(\Gamma')$ corresponding to the vertex $v\in V(\Gamma')$. Let $\sigma(v)[\hat{i}]$ denote the $d$-face of $\sigma(v)$ opposite to the vertex labeled by the color $i$.

Observe that, for $0\leq j \leq 2p-1$ the $d$-faces $\sigma(x^{(j)}_0)[\hat{d}]$ of $\sigma(x^{(j)}_0)$ can continuously move to the $d$-faces $\sigma(x^{(j)}_0)[\hat{(d+1)}]$ of $\sigma(x^{(j)}_0)$ through the $(d+1)$-simplices $\sigma(x^{(j)}_0)$ by moving the first copy of the vertices labeled by $d+1$ to the unique vertex labeled by $d$ and keeping the other vertices fixed.

Further, for $0\leq j \leq 2p-1$ the $d$-faces $\sigma(I_V(x^{(j)}_0))[\hat{(d+1)}]$ of $\sigma(I_V(x^{(j)}_0))$ are identified to the $d$-faces $\sigma(x^{(j)}_1)[\hat{(d+1)}]$ of $\sigma(x^{(j)}_1)$.
Then, the $d$-faces $\sigma(x^{(j)}_1)[\hat{(d+1)}]$ of $\sigma(x^{(j)}_1)$ can continuously move to the $d$-faces $\sigma(x^{(j)}_1)[\hat{0}]$ of $\sigma(x^{(j)}_1)$ through the $(d+1)$-simplices $\sigma(x^{(j)}_1)$ by moving  the first copy of the vertices labeled by $0$ to the second copy of the vertices labeled by $d+1$ and keeping the other vertices fixed.

For each $2\leq k \leq d$, the $d$-faces $\sigma(I_V(x^{(j)}_{k-1}))[\hat{(k-2)}]$ of $\sigma(I_V(x^{(j)}_{k-1}))$ are identified to the $d$-faces $\sigma(x^{(j)}_k)[\hat{(k-2)}]$ of $\sigma(x^{(j)}_k)$. Then, the $d$-faces $\sigma(x^{(j)}_k)[\hat{(k-2)}]$ of $\sigma(x^{(j)}_k)$ can continuously move to the $d$-faces $\sigma(x^{(j)}_k)[\hat{k-1}]$ of $\sigma(x^{(j)}_k)$ through the $(d+1)$-simplices $\sigma(x^{(j)}_k)$ by moving  the first copy of the vertices labeled by $k-1$ to the second copy of the vertices labeled by $k-2$  and keeping the other vertices fixed.

Finally, the $d$-faces $\sigma(I_V(x^{(j)}_{d}))[\hat{(d-1)}]$ of the $(d+1)$-simplex $\sigma(I_V(x^{(j)}_{d}))$ are identified to the $d$-faces $\sigma(x^{(j)}_{d+1})[\hat{(d-1)}]$ of the $(d+1)$-simplex $\sigma(x^{(j)}_{d+1})$. Then, the $d$-faces $\sigma(x^{(j)}_{d+1})[\hat{(d-1)}]$ of $\sigma(x^{(j)}_{d+1})$ can continuously move to the $d$-faces $\sigma(x^{(j)}_{d+1})[\hat{d}]$ of $\sigma(x^{(j)}_{d+1})$ through the $(d+1)$-simplices $\sigma(x^{(j)}_{d+1})$ by moving  the unique vertex labeled by $d$ to the second copy of the vertices labeled by $d-1$  and keeping the other vertices fixed.

Thus, there is a continuous path (say, $T$) from $\mathcal{K}(K_1)$ to $\mathcal{K}(K_{2})$ such that, in each level $|T|$ represents $M$. This implies that $(\Gamma',\gamma')$ represents $M \times [0,1]$. For an example, in Figure \ref{Fig:S2XI}, we have considered the 2-vertex crystallization of $S^2$, and by using this construction we have constructed the 4-regular colored graph $(\Gamma',\gamma')$ with boundary. The continuous movement of simplices in $\mathcal{K}(\Gamma')$ has also been shown in the figure.

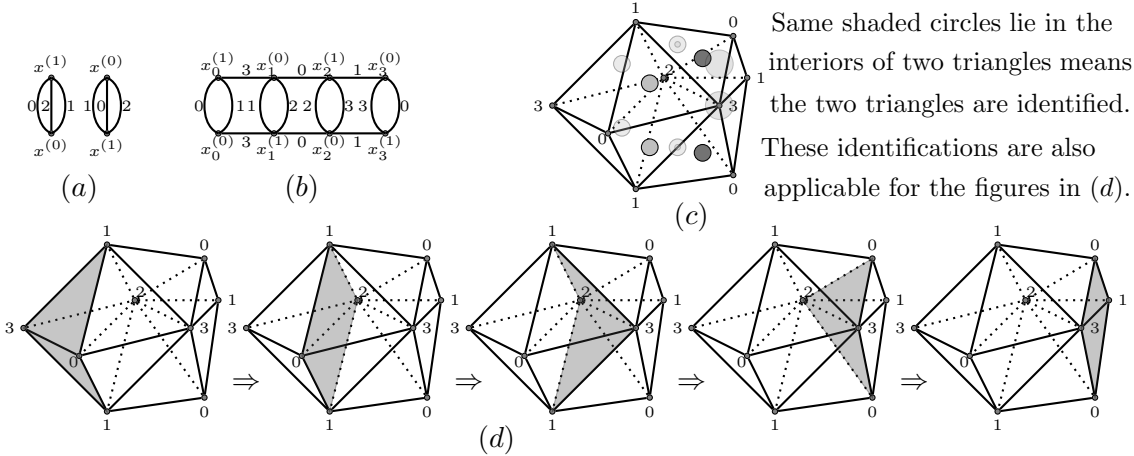
\begin{figure}[ht]
\tikzstyle{vert}=[circle, draw, fill=black!100, inner sep=0pt, minimum width=4pt]
\tikzstyle{vertex}=[circle, draw, fill=black!00, inner sep=0pt, minimum width=4pt]
\tikzstyle{ver}=[]
\tikzstyle{extra}=[circle, draw, fill=black!50, inner sep=0pt, minimum width=2pt]
\tikzstyle{edge} = [draw,thick,-]
\centering
\begin{tikzpicture}[scale=0.37]

\begin{scope}[shift={(-9,2)}]
\foreach \x/\y/\z in {-9/5/v,-9/7/v',-7/5/w,-7/7/w',-3/5/w1,-3/7/w2,-1/5/w3,-1/7/w4,1/5/w5,1/7/w6,3/5/w7,3/7/w8}{
\node[extra] (\z) at (\x,\y){};}

\foreach \x/\y/\z in {-7.7/6/1,-6.3/6/2,-7.2/6/0,-9.7/6/0,-8.3/6/1,-9.2/6/2,-3.7/6/0,-2.2/6/1,-1.8/6/1,-.3/6/2,-2/7.3/3,-2/4.7/3,.2/6/2,1.7/6/3,0/7.3/0,0/4.7/0,2.2/6/3,3.7/6/0,2/7.3/1,2/4.7/1}{
\node[ver] () at (\x,\y){\tiny{$\z$}};}
\foreach \x/\y/\z in {-9/4.5/x^{(0)},-9/7.5/x^{(1)},-7/4.5/x^{(1)},-7/7.5/x^{(0)},-3/4.5/x_0^{(0)},-3/7.5/x_0^{(1)},-1/4.5/x_1^{(1)},-1/7.5/x_1^{(0)},1/4.5/x_2^{(0)},1/7.5/x_2^{(1)},3/4.5/x_3^{(1)},3/7.5/x_3^{(0)}}{\node[ver] () at (\x,\y){\tiny{$\z$}};}

\foreach \x/\y in {v/v',w/w',w1/w3,w2/w4,w3/w5,w4/w6,w5/w7,w6/w8}{
\draw [edge]  (\x) -- (\y);}

\draw[edge]  plot [smooth,tension=1.5] coordinates{(w1)(-3.5,6)(w2)};
\draw[edge]  plot [smooth,tension=1.5] coordinates{(w1)(-2.5,6)(w2)};
\draw[edge]  plot [smooth,tension=1.5] coordinates{(w3)(-1.5,6)(w4)};
\draw[edge]  plot [smooth,tension=1.5] coordinates{(w3)(-.5,6)(w4)};
\draw[edge]  plot [smooth,tension=1.5] coordinates{(w5)(.5,6)(w6)};
\draw[edge]  plot [smooth,tension=1.5] coordinates{(w5)(1.5,6)(w6)};
\draw[edge]  plot [smooth,tension=1.5] coordinates{(w7)(2.5,6)(w8)};
\draw[edge]  plot [smooth,tension=1.5] coordinates{(w7)(3.5,6)(w8)};
\draw[edge]  plot [smooth,tension=1.5] coordinates{(w)(-7.5,6)(w')};
\draw[edge]  plot [smooth,tension=1.5] coordinates{(w)(-6.5,6)(w')};
\draw[edge]  plot [smooth,tension=1.5] coordinates{(v)(-9.5,6)(v')};
\draw[edge]  plot [smooth,tension=1.5] coordinates{(v)(-8.5,6)(v')};

\end{scope}

\begin{scope}[shift={(3,8)}]

\foreach \x/\y/\z in {-3/0/v1,1/1/v2,3/0/v3,-1/-1/v4,0/3/v5,0/-3/v6,3.5/2.5/v7,3.5/-2.5/v8,4/1/v9}{
\node[extra] (\z) at (\x,\y){};}

\foreach \x/\y in {v3/v4,v4/v1,v1/v5,v3/v5,v4/v5,v6/v1,v3/v6,v4/v6,v5/v7,v3/v7,v3/v8,v6/v8,v7/v9,v3/v9,v8/v9}{
\draw [edge]  (\x) -- (\y);}

\foreach \x/\y in {v1/v2,v2/v3,v2/v4,v2/v5,v2/v6,v2/v7,v2/v8,v2/v9}{
\draw [edge, dotted]  (\x) -- (\y);}
\foreach \x/\y/\z in {-3.5/0/3,1.2/1.3/2,3.5/0/3,-1.2/-1.2/0,0/3.5/1,0/-3.5/1,3.5/3/0,3.5/-3/0,4.5/1/1}{
\node[ver] () at (\x,\y){\tiny{$\z$}};}

\filldraw[fill=black!25,opacity=0.3] (-.5,1.5) circle (.3);
\filldraw[fill=black!25,opacity=0.3] (-.5,-.8) circle (.3);
\filldraw[fill=black!25,opacity=1] (.5,-1.5) circle (.3);
\filldraw[fill=black!25,opacity=1] (.5,.8) circle (.3);
\filldraw[fill=black!25,opacity=0.3] (1.5,2.2) circle (.1);
\filldraw[fill=black!25,opacity=0.3] (1.5,2.2) circle (.3);
\filldraw[fill=black!25,opacity=0.3] (1.5,-1.5) circle (.1);
\filldraw[fill=black!25,opacity=0.3] (1.5,-1.5) circle (.3);
\filldraw[fill=black!55,opacity=1] (2.4,1.7) circle (.3);
\filldraw[fill=black!55,opacity=1] (2.4,-1.7) circle (.3);
\filldraw[fill=black!55,opacity=0.2] (3,0) circle (.5);
\filldraw[fill=black!55,opacity=0.2] (3,1.5) circle (.5);

\node[ver] () at (11,3){\small{Same shaded circles lie in the}};
\node[ver] () at (11.3,1.5){\small{interiors of two triangles means}};
\node[ver] () at (11.3,0){\small{the two triangles are identified.}};
\node[ver] () at (10.5,-1.5){\small{These identifications are also }};
\node[ver] () at (11.2,-3){\small{applicable for the figures in $(d)$.}};
\end{scope}

\begin{scope}[shift={(-16,0)}]
\foreach \x/\y\z in {-3/0/v1,1/1/v2,3/0/v3,-1/-1/v4,0/3/v5,0/-3/v6,3.5/2.5/v7,3.5/-2.5/v8,4/1/v9}{
\node[extra] (\z) at (\x,\y){};}

\foreach \x/\y\z in {-3.5/0/3,1.2/1.3/2,3.5/0/3,-1.2/-1.2/0,0/3.5/1,0/-3.5/1,3.5/3/0,3.5/-3/0,4.5/1/1}{
\node[ver] () at (\x,\y){\tiny{$\z$}};}

\foreach \x/\y in {v3/v4,v4/v1,v1/v5,v3/v5,v4/v5,v6/v1,v3/v6,v4/v6,v5/v7,v3/v7,v3/v8,v6/v8,v7/v9,v3/v9,v8/v9}{
\draw [edge]  (\x) -- (\y);}

\foreach \x/\y in {v1/v2,v2/v3,v2/v4,v2/v5,v2/v6,v2/v7,v2/v8,v2/v9}{
\draw [edge, dotted]  (\x) -- (\y);}
\node[ver] () at (5,-2){$\Rightarrow$};
\draw[fill=black!75,opacity=0.3]  (-1,-1) -- (0,3)-- (-3,0)  -- (0,-3) -- cycle;
\end{scope}

\begin{scope}[shift={(-8,0)}]
\foreach \x/\y\z in {-3/0/v1,1/1/v2,3/0/v3,-1/-1/v4,0/3/v5,0/-3/v6,3.5/2.5/v7,3.5/-2.5/v8,4/1/v9}{
\node[extra] (\z) at (\x,\y){};}

\foreach \x/\y\z in {-3.5/0/3,1.2/1.3/2,3.5/0/3,-1.2/-1.2/0,0/3.5/1,0/-3.5/1,3.5/3/0,3.5/-3/0,4.5/1/1}{
\node[ver] () at (\x,\y){\tiny{$\z$}};}

\foreach \x/\y in {v3/v4,v4/v1,v1/v5,v3/v5,v4/v5,v6/v1,v3/v6,v4/v6,v5/v7,v3/v7,v3/v8,v6/v8,v7/v9,v3/v9,v8/v9}{
\draw [edge]  (\x) -- (\y);}

\foreach \x/\y in {v1/v2,v2/v3,v2/v4,v2/v5,v2/v6,v2/v7,v2/v8,v2/v9}{
\draw [edge, dotted]  (\x) -- (\y);}
\node[ver] () at (5,-2){$\Rightarrow$};
\draw[fill=black!75,opacity=0.3]  (-1,-1) -- (0,3)-- (1,1)  -- (0,-3) -- cycle;
\end{scope}

\begin{scope}[shift={(0,0)}]
\foreach \x/\y\z in {-3/0/v1,1/1/v2,3/0/v3,-1/-1/v4,0/3/v5,0/-3/v6,3.5/2.5/v7,3.5/-2.5/v8,4/1/v9}{
\node[extra] (\z) at (\x,\y){};}

\foreach \x/\y\z in {-3.5/0/3,1.2/1.3/2,3.5/0/3,-1.2/-1.2/0,0/3.5/1,0/-3.5/1,3.5/3/0,3.5/-3/0,4.5/1/1}{
\node[ver] () at (\x,\y){\tiny{$\z$}};}

\foreach \x/\y in {v3/v4,v4/v1,v1/v5,v3/v5,v4/v5,v6/v1,v3/v6,v4/v6,v5/v7,v3/v7,v3/v8,v6/v8,v7/v9,v3/v9,v8/v9}{
\draw [edge]  (\x) -- (\y);}

\foreach \x/\y in {v1/v2,v2/v3,v2/v4,v2/v5,v2/v6,v2/v7,v2/v8,v2/v9}{
\draw [edge, dotted]  (\x) -- (\y);}
\node[ver] () at (5,-2){$\Rightarrow$};
\draw[fill=black!75,opacity=0.3]  (3,0) -- (0,3)-- (1,1)  -- (0,-3) -- cycle;
\end{scope}

\begin{scope}[shift={(8,0)}]
\foreach \x/\y\z in {-3/0/v1,1/1/v2,3/0/v3,-1/-1/v4,0/3/v5,0/-3/v6,3.5/2.5/v7,3.5/-2.5/v8,4/1/v9}{
\node[extra] (\z) at (\x,\y){};}

\foreach \x/\y\z in {-3.5/0/3,1.2/1.3/2,3.5/0/3,-1.2/-1.2/0,0/3.5/1,0/-3.5/1,3.5/3/0,3.5/-3/0,4.5/1/1}{
\node[ver] () at (\x,\y){\tiny{$\z$}};}

\foreach \x/\y in {v3/v4,v4/v1,v1/v5,v3/v5,v4/v5,v6/v1,v3/v6,v4/v6,v5/v7,v3/v7,v3/v8,v6/v8,v7/v9,v3/v9,v8/v9}{
\draw [edge]  (\x) -- (\y);}

\foreach \x/\y in {v1/v2,v2/v3,v2/v4,v2/v5,v2/v6,v2/v7,v2/v8,v2/v9}{
\draw [edge, dotted]  (\x) -- (\y);}
\node[ver] () at (5,-2){$\Rightarrow$};
\draw[fill=black!75,opacity=0.3]  (3,0) -- (3.5,2.5)-- (1,1)  -- (3.5,-2.5) -- cycle;
\end{scope}

\begin{scope}[shift={(16,0)}]
\foreach \x/\y\z in {-3/0/v1,1/1/v2,3/0/v3,-1/-1/v4,0/3/v5,0/-3/v6,3.5/2.5/v7,3.5/-2.5/v8,4/1/v9}{
\node[extra] (\z) at (\x,\y){};}

\foreach \x/\y\z in {-3.5/0/3,1.2/1.3/2,3.5/0/3,-1.2/-1.2/0,0/3.5/1,0/-3.5/1,3.5/3/0,3.5/-3/0,4.5/1/1}{
\node[ver] () at (\x,\y){\tiny{$\z$}};}

\foreach \x/\y in {v3/v4,v4/v1,v1/v5,v3/v5,v4/v5,v6/v1,v3/v6,v4/v6,v5/v7,v3/v7,v3/v8,v6/v8,v7/v9,v3/v9,v8/v9}{
\draw [edge]  (\x) -- (\y);}

\foreach \x/\y in {v1/v2,v2/v3,v2/v4,v2/v5,v2/v6,v2/v7,v2/v8,v2/v9}{
\draw [edge, dotted]  (\x) -- (\y);}
\draw[fill=black!75,opacity=0.3]  (3,0) -- (3.5,2.5)-- (4,1)  -- (3.5,-2.5) -- cycle;
\end{scope}

\node[ver] () at (-17,5){$(a)$};
\node[ver] () at (-9,5){$(b)$};
\node[ver] () at (5,4){$(c)$};
\node[ver] () at (-2,-4){$(d)$};

\end{tikzpicture}
\caption{ $(a)$ A crystallization $(\Gamma,\gamma)$ of $\mathbb{S}^2$ and an isomorphism $I(\Gamma):\Gamma \to \Gamma$ such that $I_V(x^{(p)})=x^{(p+1)}$ (addition is modulo $2$) and $I_c(i)=i+1$ (addition is modulo $3$), $(b)$ The corresponding $4$-colored graph $(\Gamma',\gamma')$ with boundary, $(c)$
$\mathcal{K}(\Gamma')$, $(d)$ Topological realization of the continuous movement of simplices from $\mathbb{S}^2\times \{0\}$ to $\mathbb{S}^2\times\{1\}$ in $\mathbb{S}^2\times [0,1] \cong |\mathcal{K}(\Gamma')|$.}\label{Fig:S2XI}
\end{figure}

{\em Construction of $\bar \Gamma$:} Note that the continuous path $T:\mathcal{K}(K_1)\to \mathcal{K}(K_{2})$ sends the first copy of vertices labeled by the colors
$d+1,0,1,\dots,d-1$ to the second copy of vertices labeled by the colors  $d-1,d+1,0,\dots,d-2$. Here, $|\mathcal{K}(K_1)|\cong |\mathcal{K}(K_2)| \cong M$.

Now, let  $\tilde I(\Gamma):=(\tilde I_V,\tilde I_c):\Gamma \to \Gamma$ be an isomorphism ($\tilde I(\Gamma)$ may be different from $I(\Gamma)$) with the property $\tilde I_c(i)=i+1$ (addition is modulo $d+1$) for $i \in \Delta_d$. Then, we have $\tilde I(K_{2}):\mathcal{K}(K_{2}) \to \mathcal{K}(K_{2})$ such that $\tilde I(K_{2})(d-1)=d+1$, $\tilde I(K_{2})(d+1)=0$ and $\tilde I(K_{2})(i)=i+1$ for $i\in\{0,1,\dots,d-2\}$ and $|\tilde I(K_{2})|:|\mathcal{K}(K_{2})| \to |\mathcal{K}(K_{2})|$ is a (PL) homeomorphism. Thus, $|\tilde I(K_{2})|\circ |T|:M\to M$ is a (PL) homeomorphism such that $\tilde I(K_{2})\circ T:\mathcal{K}(K_1)\to \mathcal{K}(K_{2})$ sends each vertex to the same colored copy of itself. Therefore, we can construct a  crystallization $(\bar \Gamma,\bar\gamma)$ of mapping torus of the map $f:=|\tilde I(K_{2})|\circ |T|:M\to M$ by joining  the vertices  $x^{(j)}_0$ and $\tilde I_V(x^{(j)}_{d+1})$ by a $d$-colored edge  for each $0\leq j \leq 2p-1$.  Then, by the construction $\bar \Gamma$ has $(d+2)2p=2(d+2)p$ vertices. \end{proof}

\begin{example}{\rm
Let $(\Gamma,\gamma)$ be the unique 8-vertex crystallization of $\mathbb{S}^2 \times \mathbb{S}^1$ with color set $\Delta_3$ (cf. \cite{bd14}). Let $x^{(0)},x^{(1)},\dots, x^{(7)}$ be the vertices of $\Gamma$. Let $I(\Gamma):\Gamma \to \Gamma$ be an isomorphism such that $I_V(x^{(p)})=x^{(p+2)}$ (addition is modulo $8$) for all $p\in\{0,1,\dots,7\}$ and $I_c(i)=i+1$ (addition is modulo $4$) for all $i\in \Delta_3$. Then by the construction given in Lemma \ref{lemma:1}, we can construct $\Gamma^k$ for $0 \leq k \leq 4$ (cf. Figure \ref{fig:1}). Further,  by the construction given in Lemma \ref{lemma:1}, we can construct
the $5$-colored graph $(\Gamma',\gamma')$ with boundary  as in Figure \ref{fig:2}. Again, by the construction in Lemma \ref{lemma:1}, $(\Gamma',\gamma')$ represents $\mathbb{S}^2 \times \mathbb{S}^1 \times [0,1]$. Now, if we choose $\tilde{I}(\Gamma)=I(\Gamma)$ as in the proof of Lemma \ref{lemma:1}, then we get a crystallization $(\bar\Gamma,\bar\gamma)$ of a mapping torus of a (PL) homeomorphism $f:\mathbb{S}^2 \times \mathbb{S}^1\to \mathbb{S}^2 \times \mathbb{S}^1$ with 40 vertices (cf. Figure \ref{fig:S2S1}). Later, in the proof of Theorem \ref{theorem:genus1}, we shall see that the regular genus of the mapping torus is 6.

\begin{figure}[ht]
\tikzstyle{vert}=[circle, draw, fill=black!100, inner sep=0pt, minimum width=4pt]
\tikzstyle{vertex}=[circle, draw, fill=black!00, inner sep=0pt, minimum width=4pt]
\tikzstyle{ver}=[]
\tikzstyle{extra}=[circle, draw, fill=black!50, inner sep=0pt, minimum width=2pt]
\tikzstyle{edge} = [draw,thick,-]

\centering
\begin{tikzpicture}[scale=0.515]
\begin{scope}[shift={(0,0)}]
\foreach \w/\x/\y/\z in {-1.5/-1/3/0,1.5/1/3/1,1.5/1/1/2,-1.5/-1/1/3,-1.5/-1/-1/4,1.5/1/-1/5,1.5/1/-3/6,-1.5/-1/-3/7}{
\node[vert] (a\z) at (\x,\y){};}

\node[ver] () at (-1.5,3.2){\tiny{$ x_{0}^{(2)}$}};
\node[ver] () at (1.7,2.8){\tiny{$ x_{0}^{(3)}$}};
\node[ver] () at (-1.5,1.2){\tiny{$ x_{0}^{(4)}$}};
\node[ver] () at (1.7,.8){\tiny{$ x_{0}^{(5)}$}};
\node[ver] () at (-1.5,-.8){\tiny{$ x_{0}^{(6)}$}};
\node[ver] () at (1.7,-1.2){\tiny{$ x_{0}^{(7)}$}};
\node[ver] () at (-1.5,-2.8){\tiny{$ x_{0}^{(0)}$}};
\node[ver] () at (1.7,-3.2){\tiny{$ x_{0}^{(1)}$}};

\foreach \x/\y in {a2/a1,a0/a3,a7/a6,a5/a4}{
\draw [line width=2pt, line cap=rectengle, dash pattern=on 1pt off 1]  (\x) -- (\y);
\draw [edge]  (\x) -- (\y);}

\foreach \x/\y in {a0/a1,a2/a3,a5/a6,a7/a4,a3/a4,a2/a5}{
\draw [edge]  (\x) -- (\y);}

\foreach \x/\y in {a3/a4,a2/a5}{
\draw [line width=3pt, line cap=round, dash pattern=on 0pt off 2\pgflinewidth]  (\x) -- (\y);}

\draw[line width=3pt, line cap=round, dash pattern=on 0pt off 2\pgflinewidth]  plot [smooth,tension=1.5] coordinates{(a0)(0,2.5)(a1)};
\draw[line width=3pt, line cap=round, dash pattern=on 0pt off 2\pgflinewidth]  plot [smooth,tension=1.5] coordinates{(a6)(0,-2.5)(a7)};
\draw[edge]  plot [smooth,tension=1.5] coordinates{(a0)(0,2.5)(a1)};
\draw[edge]  plot [smooth,tension=1.5] coordinates{(a6)(0,-2.5)(a7)};

\end{scope}

\begin{scope}[shift={(5,0)}]
\foreach \w/\x/\y/\z in {-1.5/-1/3/0,1.5/1/3/1,1.5/1/1/2,-1.5/-1/1/3,-1.5/-1/-1/4,1.5/1/-1/5,1.5/1/-3/6,-1.5/-1/-3/7}{
\node[vert] (b\z) at (\x,\y){};}

\node[ver] () at (-1.5,3.2){\tiny{$ x_{1}^{(2)}$}};
\node[ver] () at (1.7,2.8){\tiny{$ x_{1}^{(3)}$}};
\node[ver] () at (-1.5,1.2){\tiny{$ x_{1}^{(4)}$}};
\node[ver] () at (1.7,.8){\tiny{$ x_{1}^{(5)}$}};
\node[ver] () at (-1.5,-.8){\tiny{$ x_{1}^{(6)}$}};
\node[ver] () at (1.7,-1.2){\tiny{$ x_{1}^{(7)}$}};
\node[ver] () at (-1.5,-2.8){\tiny{$ x_{1}^{(0)}$}};
\node[ver] () at (1.7,-3.2){\tiny{$ x_{1}^{(1)}$}};

\foreach \x/\y in {b2/b1,b0/b3,b7/b6,b5/b4}{
\draw [line width=3pt, line cap=round, dash pattern=on 0pt off 2\pgflinewidth]  (\x) -- (\y);
\draw [edge]  (\x) -- (\y);}

\foreach \x/\y in {b3/b4,b2/b5}{
\draw [edge]  (\x) -- (\y);}

\foreach \x/\y in {b0/b1,b2/b3,b5/b6,b7/b4}{
\draw [edge, dashed]  (\x) -- (\y);}

\draw[edge]  plot [smooth,tension=1.5] coordinates{(b0)(0,2.5)(b1)};
\draw[edge]  plot [smooth,tension=1.5] coordinates{(b6)(0,-2.5)(b7)};
\end{scope}

\begin{scope}[shift={(10,0)}]
\foreach \w/\x/\y/\z in {-1.5/-1/3/0,1.5/1/3/1,1.5/1/1/2,-1.5/-1/1/3,-1.5/-1/-1/4,1.5/1/-1/5,1.5/1/-3/6,-1.5/-1/-3/7}{
\node[vert] (c\z) at (\x,\y){};}
\node[ver] () at (-1.5,3.2){\tiny{$ x_{2}^{(2)}$}};
\node[ver] () at (1.7,2.8){\tiny{$ x_{2}^{(3)}$}};
\node[ver] () at (-1.5,1.2){\tiny{$ x_{2}^{(4)}$}};
\node[ver] () at (1.7,.8){\tiny{$ x_{2}^{(5)}$}};
\node[ver] () at (-1.5,-.8){\tiny{$ x_{2}^{(6)}$}};
\node[ver] () at (1.7,-1.2){\tiny{$ x_{2}^{(7)}$}};
\node[ver] () at (-1.5,-2.8){\tiny{$ x_{2}^{(0)}$}};
\node[ver] () at (1.7,-3.2){\tiny{$ x_{2}^{(1)}$}};
\foreach \x/\y in {c0/c3,c1/c2,c5/c4,c6/c7}{
\draw [edge]  (\x) -- (\y);}

\foreach \x/\y in {c0/c1,c2/c3,c5/c6,c7/c4}{
\draw [edge, dotted]  (\x) -- (\y);}

\foreach \x/\y in {c3/c4,c2/c5}{
\draw [edge, dashed]  (\x) -- (\y);}

\draw[edge, dashed] plot [smooth,tension=1.5] coordinates{(c0)(0,2.5)(c1)};
\draw[edge, dashed]  plot [smooth,tension=1.5] coordinates{(c6)(0,-2.5)(c7)};
\end{scope}

\begin{scope}[shift={(15,0)}]
\foreach \w/\x/\y/\z in {-1.5/-1/3/0,1.5/1/3/1,1.5/1/1/2,-1.5/-1/1/3,-1.5/-1/-1/4,1.5/1/-1/5,1.5/1/-3/6,-1.5/-1/-3/7}{
\node[vert] (d\z) at (\x,\y){};}
\node[ver] () at (-1.5,3.2){\tiny{$ x_{3}^{(2)}$}};
\node[ver] () at (1.7,2.8){\tiny{$ x_{3}^{(3)}$}};
\node[ver] () at (-1.5,1.2){\tiny{$ x_{3}^{(4)}$}};
\node[ver] () at (1.7,.8){\tiny{$ x_{3}^{(5)}$}};
\node[ver] () at (-1.5,-.8){\tiny{$ x_{3}^{(6)}$}};
\node[ver] () at (1.7,-1.2){\tiny{$ x_{3}^{(7)}$}};
\node[ver] () at (-1.5,-2.8){\tiny{$ x_{3}^{(0)}$}};
\node[ver] () at (1.7,-3.2){\tiny{$ x_{3}^{(1)}$}};
\foreach \x/\y in {d2/d1,d0/d3,d7/d6,d5/d4}{\draw [edge, dashed]  (\x) -- (\y);}

\foreach \x/\y in {d0/d1,d2/d3,d5/d6,d7/d4}{
\draw [edge]  (\x) -- (\y);
\draw [line width=2pt, line cap=rectengle, dash pattern=on 1pt off 1]  (\x) -- (\y);
}

\foreach \x/\y in {d3/d4,d2/d5}{
\draw [edge, dotted]  (\x) -- (\y);}

\draw[edge, dotted]  plot [smooth,tension=1.5] coordinates{(d0)(0,2.5)(d1)};
\draw[edge, dotted]  plot [smooth,tension=1.5] coordinates{(d6)(0,-2.5)(d7)};
\end{scope}

\begin{scope}[shift={(20,0)}]
\foreach \w/\x/\y/\z in {-1.5/-1/3/0,1.5/1/3/1,1.5/1/1/2,-1.5/-1/1/3,-1.5/-1/-1/4,1.5/1/-1/5,1.5/1/-3/6,-1.5/-1/-3/7}{
\node[vert] (e\z) at (\x,\y){};}
\node[ver] () at (-1.5,3.2){\tiny{$ x_{4}^{(2)}$}};
\node[ver] () at (1.7,2.8){\tiny{$ x_{4}^{(3)}$}};
\node[ver] () at (-1.5,1.2){\tiny{$ x_{4}^{(4)}$}};
\node[ver] () at (1.7,.8){\tiny{$ x_{4}^{(5)}$}};
\node[ver] () at (-1.5,-.8){\tiny{$ x_{4}^{(6)}$}};
\node[ver] () at (1.7,-1.2){\tiny{$ x_{4}^{(7)}$}};
\node[ver] () at (-1.5,-2.8){\tiny{$ x_{4}^{(0)}$}};
\node[ver] () at (1.7,-3.2){\tiny{$ x_{4}^{(1)}$}};
\foreach \x/\y in {e2/e1,e0/e3,e7/e6,e5/e4}{\draw [edge, dotted]  (\x) -- (\y);}
\foreach \x/\y in {e0/e1,e2/e3,e5/e6,e7/e4}{
\draw [edge]  (\x) -- (\y);
\draw [line width=3pt, line cap=round, dash pattern=on 0pt off 2\pgflinewidth]  (\x) -- (\y);
}

\foreach \x/\y in {e3/e4,e2/e5}{
\draw [edge]  (\x) -- (\y);}

\foreach \x/\y in {e3/e4,e2/e5}{
\draw [line width=2pt, line cap=rectengle, dash pattern=on 1pt off 1]  (\x) -- (\y);
}

\draw[line width=2pt, line cap=rectengle, dash pattern=on 1pt off 1]  plot [smooth,tension=1.5] coordinates{(e0)(0,2.5)(e1)};
\draw[line width=2pt, line cap=rectengle, dash pattern=on 1pt off 1]  plot [smooth,tension=1.5] coordinates{(e6)(0,-2.5)(e7)};
\draw[edge]  plot [smooth,tension=1.5] coordinates{(e0)(0,2.5)(e1)};
\draw[edge]  plot [smooth,tension=1.5] coordinates{(e6)(0,-2.5)(e7)};
\end{scope}

\begin{scope}[shift={(25,0)}]
\foreach \w/\x/\y/\z in {-1.5/-1/3/0,1.5/1/3/1,1.5/1/1/2,-1.5/-1/1/3,-1.5/-1/-1/4,1.5/1/-1/5,1.5/1/-3/6,-1.5/-1/-3/7}{
\node[vert] (f\z) at (\x,\y){};}
\node[ver] () at (-1.5,3.2){\tiny{$ x_{0}^{(2)}$}};
\node[ver] () at (1.7,2.8){\tiny{$ x_{0}^{(3)}$}};
\node[ver] () at (-1.5,1.2){\tiny{$ x_{0}^{(4)}$}};
\node[ver] () at (1.7,.8){\tiny{$ x_{0}^{(5)}$}};
\node[ver] () at (-1.5,-.8){\tiny{$ x_{0}^{(6)}$}};
\node[ver] () at (1.7,-1.2){\tiny{$ x_{0}^{(7)}$}};
\node[ver] () at (-1.5,-2.8){\tiny{$ x_{0}^{(0)}$}};
\node[ver] () at (1.7,-3.2){\tiny{$ x_{0}^{(1)}$}};
\foreach \x/\y in {f2/f1,f0/f3,f7/f6,f5/f4}{
\draw [line width=2pt, line cap=rectengle, dash pattern=on 1pt off 1]  (\x) -- (\y);
\draw [edge]  (\x) -- (\y);}

\foreach \x/\y in {f0/f1,f2/f3,f5/f6,f7/f4,f3/f4,f2/f5}{
\draw [edge]  (\x) -- (\y);}

\foreach \x/\y in {f3/f4,f2/f5}{
\draw [line width=3pt, line cap=round, dash pattern=on 0pt off 2\pgflinewidth]  (\x) -- (\y);}

\draw[line width=3pt, line cap=round, dash pattern=on 0pt off 2\pgflinewidth]  plot [smooth,tension=1.5] coordinates{(f0)(0,2.5)(f1)};
\draw[line width=3pt, line cap=round, dash pattern=on 0pt off 2\pgflinewidth]  plot [smooth,tension=1.5] coordinates{(f6)(0,-2.5)(f7)};
\draw[edge]  plot [smooth,tension=1.5] coordinates{(f0)(0,2.5)(f1)};
\draw[edge]  plot [smooth,tension=1.5] coordinates{(f6)(0,-2.5)(f7)};
\end{scope}

\foreach \x/\y in {a2/b1,a3/b0,a5/b2,a4/b3,a7/b4,a6/b5}{
\draw [edge, dotted]  (\x) -- (\y);}
\draw[edge, dotted]  plot [smooth,tension=0.1] coordinates{(a0)(-2,1)(-2,-3.5)(3,-3.5)(b7)};
\draw[edge, dotted]  plot [smooth,tension=0.1] coordinates{(a1)(1,3.5)(7,3.5)(7,-2)(b6)};

\foreach \x/\y in {b2/c1,b3/c0,b5/c2,b4/c3,b7/c4,b6/c5}{
\draw [edge]  (\x) -- (\y);
\draw [line width=2pt, line cap=rectengle, dash pattern=on 1pt off 1]  (\x) -- (\y);
}
\draw[edge]  plot [smooth,tension=0.1] coordinates{(b0)(3,1)(3,-3.5)(8,-3.5)(c7)};
\draw[edge]  plot [smooth,tension=0.1] coordinates{(b1)(6,3.5)(12,3.5)(12,-2)(c6)};
\draw[line width=2pt, line cap=rectengle, dash pattern=on 1pt off 1]  plot [smooth,tension=0.1] coordinates{(b0)(3,1)(3,-3.5)(8,-3.5)(c7)};
\draw[line width=2pt, line cap=rectengle, dash pattern=on 1pt off 1]  plot [smooth,tension=0.1] coordinates{(b1)(6,3.5)(12,3.5)(12,-2)(c6)};

\foreach \x/\y in {c2/d1,c3/d0,c5/d2,c4/d3,c7/d4,c6/d5}{
\draw [edge]  (\x) -- (\y);
\draw [line width=3pt, line cap=round, dash pattern=on 0pt off 2\pgflinewidth]  (\x) -- (\y);
}
\draw[edge]  plot [smooth,tension=0.1] coordinates{(c0)(8,1)(8,-3.5)(13,-3.5)(d7)};
\draw[edge]  plot [smooth,tension=0.1] coordinates{(c1)(11,3.5)(17,3.5)(17,-2)(d6)};
\draw[line width=3pt, line cap=round, dash pattern=on 0pt off 2\pgflinewidth]  plot [smooth,tension=0.1] coordinates{(c0)(8,1)(8,-3.5)(13,-3.5)(d7)};
\draw[line width=3pt, line cap=round, dash pattern=on 0pt off 2\pgflinewidth]  plot [smooth,tension=0.1] coordinates{(c1)(11,3.5)(17,3.5)(17,-2)(d6)};

\foreach \x/\y in {d2/e1,d3/e0,d5/e2,d4/e3,d7/e4,d6/e5}{
\draw [edge]  (\x) -- (\y);}
\draw[edge]  plot [smooth,tension=0.1] coordinates{(d0)(13,1)(13,-3.5)(18,-3.5)(e7)};
\draw[edge]  plot [smooth,tension=0.1] coordinates{(d1)(16,3.5)(22,3.5)(22,-2)(e6)};

\foreach \x/\y in {e2/f1,e3/f0,e5/f2,e4/f3,e7/f4,e6/f5}{
\draw [edge, dashed]  (\x) -- (\y);}
\draw[edge, dashed]  plot [smooth,tension=0.1] coordinates{(e0)(18,1)(18,-3.5)(23,-3.5)(f7)};
\draw[edge, dashed]  plot [smooth,tension=0.1] coordinates{(e1)(21,3.5)(27,3.5)(27,-2)(f6)};

 \begin{scope}[shift={(5,0)}]
\node[ver] (308) at (-4,-5){$0$};
\node[ver] (300) at (-1,-5){$1$};
\node[ver] (301) at (2,-5){$2$};
\node[ver] (302) at (5,-5){$3$};
\node[ver] (303) at (8,-5){$4$};
\node[ver] (309) at (-2,-5){};
\node[ver] (304) at (1,-5){};
\node[ver] (305) at (4,-5){};
\node[ver] (306) at (7,-5){};
\node[ver] (307) at (10,-5){};
\path[edge] (300) -- (304);
\path[edge] (308) -- (309);
\draw [line width=2pt, line cap=rectengle, dash pattern=on 1pt off 1]  (308) -- (309);
\draw [line width=3pt, line cap=round, dash pattern=on 0pt off 2\pgflinewidth]  (300) -- (304);
\path[edge] (301) -- (305);
\path[edge, dashed] (302) -- (306);
\path[edge, dotted] (303) -- (307);
\end{scope}
\end{tikzpicture}
\caption{A 40-vertex crystallization $(\bar\Gamma,\bar \gamma)$ of a mapping torus of a (PL) homeomorphism $f:\mathbb{S}^2\times \mathbb{S}^1 \to \mathbb{S}^2\times \mathbb{S}^1$ with regular genus 6 (the vertices and edges of the two components of $\bar\Gamma_{\{0,1,2\}}$ with vertices $x_0^{(0)},x_0^{(1)}\dots,x_0^{(7)}$ must be identified in the above figure).}\label{fig:S2S1}
\end{figure}
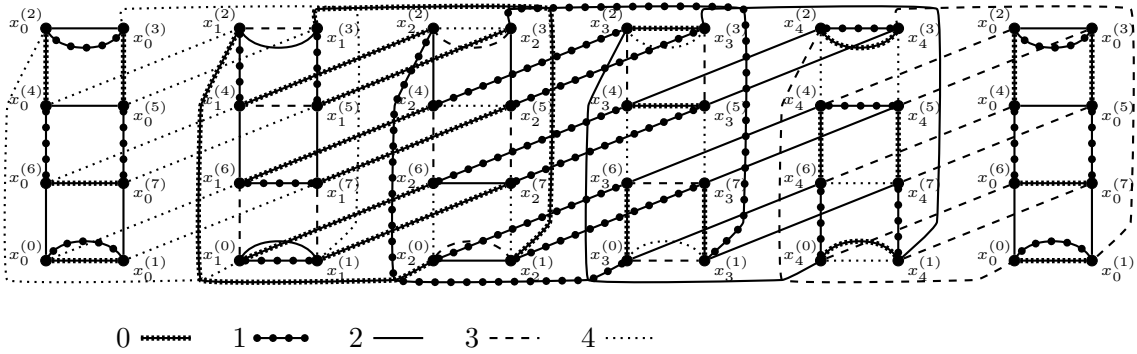
}
\end{example}

If $\Gamma$ is non-bipartite (i.e., $M$ is non-orientable) then  $\bar \Gamma$ is also non-bipartite (i.e., $M_f$ is also non-orientable). For $d \geq 1$, let $(\Gamma,\gamma)$ be a $2p$-vertex crystallization of a closed connected  $d$-manifold $M$. Let the vertices of the crystallization $(\Gamma,\gamma)$ be $x^{(0)},x^{(1)}$, $\dots, x^{(2p-1)}$. If $M$ is orientable then $(\Gamma,\gamma)$ is a bipartite graph. Let $\{x^{(2u)},0\leq u \leq p-1\}$ and $\{x^{(2u-1)},1\leq u \leq p\}$ be the partition of the vertex set.

\begin{lemma} \label{lemma:2}
For $d \geq 1$, let $(\Gamma,\gamma)$ be a $2p$-vertex crystallization (with color set $\Delta_d$) of a closed connected orientable $d$-manifold $M$. If, there exist two isomorphisms $I'(\Gamma):=(I'_V,I'_c),I''(\Gamma):=(I''_V,I''_c):\Gamma \to \Gamma$ such that $I'_V(x^{(2u')})=x^{(2v')}$, $I''_V(x^{(2u''-1)})=x^{(2v'')}$ for some $u',v',u''v''$, and $I'_c(i)=I''_c(i)=i+1$ (addition is modulo $d+1$) for $i\in \Delta_d$, then there exists a crystallization $(\bar \Gamma,\bar \gamma)$ of an orientable (resp.,  non-orientable) mapping torus $M_f$ (resp., $M_{\tilde f}$) of a (PL) homeomorphism $f:M\to M$ (resp., $\tilde f:M\to M$) with $2(d+2)p$ vertices.

\end{lemma}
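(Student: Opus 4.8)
The plan is to re-run the construction of Lemma \ref{lemma:1} verbatim, exploiting the freedom, already isolated there, in the choice of the color-cyclic automorphism: the one used in the intermediate steps (ii), (iv), (v) that builds the graph $\Gamma^{0,d+1}$ representing $M\times I$, and the one, $\tilde I$, used to close up the mapping torus. Both $I'$ and $I''$ carry the cyclic color permutation $i\mapsto i+1$, so either may serve in either role, and in each case Lemma \ref{lemma:1} produces a crystallization $(\bar\Gamma,\bar\gamma)$ of a mapping torus with exactly $2(d+2)p$ vertices. By Proposition \ref{preliminaries}, the mapping torus so obtained is orientable precisely when $\bar\Gamma$ is bipartite, so the whole problem reduces to controlling the bipartiteness of $\bar\Gamma$ through the choice of automorphisms.

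First I would record the effect of the two hypothesized automorphisms on the bipartition. Since $M$ is orientable, $\Gamma$ is connected and bipartite (Proposition \ref{preliminaries}), with parts $\{x^{(2u)}\}$ and $\{x^{(2v-1)}\}$; hence every automorphism of $\Gamma$ either preserves or interchanges these two parts. The assumption $I'_V(x^{(2u')})=x^{(2v')}$ forces $I'$ to be part-preserving, while $I''_V(x^{(2u''-1)})=x^{(2v'')}$ forces $I''$ to be part-interchanging. Writing $p(j):=j\bmod 2$ for the proper $2$-coloring of $\Gamma$, this says $p(\iota'(j))\equiv p(j)$ and $p(\iota''(j))\equiv p(j)+1\pmod 2$ for all $j$, where $\iota',\iota''$ are the vertex permutations of $I',I''$.

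Next I would analyze the bipartiteness of $\bar\Gamma$ by a $2$-coloring argument on the $d+2$ copies $\Gamma^0,\dots,\Gamma^{d+1}$. Assign to copy $\Gamma^k$ a parity offset $\epsilon_k\in\{0,1\}$ and color $x^{(j)}_k$ by $p(j)+\epsilon_k$. The edges internal to each copy are inherited from $\Gamma$ and so are properly colored for every choice of $\epsilon_k$. Each intermediate layer joins $I_V(x^{(j)}_{k-1})$ to $x^{(j)}_k$, and (because the parity shift $\delta_I$ of the chosen intermediate automorphism is the same for every $j$) this layer is properly colored exactly when $\epsilon_k-\epsilon_{k-1}\equiv\delta_I+1\pmod 2$. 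Thus the open chain $\Gamma^{0,d+1}$ is always bipartite (consistent with $M\times I$ being orientable), and the offsets are determined from $\epsilon_0$ by
\begin{equation}\label{eq:offset}
\epsilon_{d+1}-\epsilon_0\equiv(d+1)(\delta_I+1)\pmod 2 .
\end{equation}
The single remaining constraint is the closing layer, which joins $x^{(j)}_0$ to $\tilde I_V(x^{(j)}_{d+1})$ and is properly colored iff $\epsilon_{d+1}-\epsilon_0\equiv\delta_{\tilde I}+1\pmod 2$, where $\delta_{\tilde I}$ is the parity shift of the closing automorphism. Comparing with \eqref{eq:offset}, $\bar\Gamma$ is bipartite (equivalently, $M_f$ is orientable) if and only if $(d+1)(\delta_I+1)\equiv\delta_{\tilde I}+1\pmod 2$.

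Finally I would select the automorphisms. Taking $I''$ (so $\delta_I=1$) in all intermediate steps makes the left-hand side of the criterion vanish, so $\bar\Gamma$ is bipartite iff the closing automorphism is part-interchanging. Hence closing with $\tilde I=I''$ yields a bipartite $\bar\Gamma$, i.e.\ an orientable mapping torus $M_f$, while closing with $\tilde I=I'$ yields a non-bipartite $\bar\Gamma$, i.e.\ a non-orientable mapping torus $M_{\tilde f}$; both have $2(d+2)p$ vertices, uniformly in $d$. The step I expect to be the main obstacle is justifying that bipartiteness of $\bar\Gamma$ is governed by this single cyclic relation and nothing more: one must check that each connecting layer imposes one and the same parity relation on all $2p$ of its edges (which is precisely the content of $\delta_I,\delta_{\tilde I}$ being global invariants of part-preserving/interchanging automorphisms) and that the copies close up into a single cycle, so that no further odd cycles -- within a copy or threading several layers -- can obstruct the coloring beyond the constraint recorded in \eqref{eq:offset} versus the closing layer.
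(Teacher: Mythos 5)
Your proposal is correct, and it follows the same overall strategy as the paper: both run the construction of Lemma \ref{lemma:1} with the intermediate automorphism $I$ and the closing automorphism $\tilde I$ drawn from $\{I'(\Gamma),I''(\Gamma)\}$, and both reduce orientability of the mapping torus to bipartiteness of $\bar\Gamma$ via Proposition \ref{preliminaries}. The difference is in execution. The paper settles bipartiteness by a case analysis on the parity of $d$: it takes $\tilde I=I=I''$ for the orientable torus, $\tilde I=I=I'$ for the non-orientable one when $d$ is odd, and $\tilde I\neq I$ (one of each) when $d$ is even, in each case merely \emph{asserting} that $\bar\Gamma$ is bipartite, respectively contains an odd cycle. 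You instead derive a single necessary-and-sufficient criterion, $(d+1)(\delta_I+1)\equiv\delta_{\tilde I}+1\pmod 2$, by propagating parity offsets $\epsilon_k$ across the $d+2$ copies, and then make the uniform choice $I=I''$ with $\tilde I=I''$ (orientable) or $\tilde I=I'$ (non-orientable), valid for all $d$ at once. This buys two things: it supplies the justification the paper leaves implicit (for the ``only if'' direction you need that each copy, a relabelled $\Gamma_{\{0,\dots,d-1\}}$, is connected --- which holds because the crystallization is contracted --- so that any proper $2$-coloring restricts on each copy to the bipartition of $\Gamma$ up to a flip, and the connecting layers then force your cyclic consistency condition), and it removes the dependence on the parity of $d$. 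One can check that your criterion reproduces exactly the paper's three bullet cases, so the two arguments are consistent; yours is the more systematic and complete form of the same proof.
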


\begin{proof}
Let $\tilde I(\Gamma), I(\Gamma)$, $T$ and $(\bar \Gamma,\bar\gamma)$ be as in the proof of Lemma \ref{lemma:1}. Since $M$ is orientable, $\Gamma$ is bipartite.

\begin{itemize}
\item If $\tilde I(\Gamma)=I(\Gamma)=I''(\Gamma)$ then $\bar\Gamma$ is bipartite. Thus $(\bar \Gamma,\bar\gamma)$ represents an orientable mapping torus $M_f$ of the map $f:=|I''(\Gamma)|\circ |T|:M\to M$ with $2(d+2)p$ vertices.

\item If $d$ is odd and $\tilde I(\Gamma)=I(\Gamma)=I'(\Gamma)$ then we have an odd cycle in $\bar \Gamma$. Thus $(\bar \Gamma,\bar\gamma)$ represents a non-orientable mapping torus $M_{\tilde f}$ of the map $\tilde f:=|I'(\Gamma)|\circ |T|:M\to M$ with $2(d+2)p$ vertices.

\item If $d$ is even and $\tilde I(\Gamma)\neq I(\Gamma) \in \{I'(\Gamma),I''(\Gamma)\}$ then then we have an odd cycle in $\bar \Gamma$. Thus $(\bar \Gamma,\bar\gamma)$ represents a non-orientable mapping torus $M_{\tilde f}$ of the map $\tilde f:=|\tilde I(\Gamma)|\circ |T|:M\to M$ with $2(d+2)p$ vertices.
\end{itemize}
\end{proof}

\begin{remark}\label{remark:component}
{\rm For $0 \leq j \leq d+1$,  $\bar \Gamma_{\{j,j+1\}}$ (addition is modulo $d+2$) is of type $pC_6 \sqcup (d-1)\Gamma_{\{0,1\}}$ and hence $\bar g_{\{j,j+1\}}=p+(d-1)g_{\{0,1\}}$. If $j$ and $k$ are not consecutive colors then $\bar \Gamma_{\{j,k\}}$  is of type $2pC_4 \sqcup \Gamma_{\{j_1,k_1\}}\sqcup\dots \sqcup\Gamma_{\{j_{d-2},k_{d-2}\}}$ for $d\geq 2$, where  $j_i$ and $k_i$ are not consecutive colors. 

In particular, if $d=3$ then $g_{\{0,2\}}=g_{\{1,3\}}$ (cf. \cite{fgg86}). Thus, by the construction, if $j$ and $k$ are not consecutive colors then $\bar \Gamma_{\{j,k\}}$  is of type $2pC_4 \sqcup \Gamma_{\{j_1,k_1\}}$, i.e., $\bar g_{\{j,k\}}=2p+g_{\{0,2\}}$. Further,
from \cite{fgg86}, we know that $g_{\{0,1\}} + g_{\{0,2\}}+g_{\{0,3\}}=2+p$. On the other hand, the existence of an isomorphism that cyclically permutes the colors of the edges implies $g_{\{0,1\}} = g_{\{1,2\}}=g_{\{2,3\}}=g_{\{0,3\}}$ (supposing the permutation to be the identity). Therefore, $2g_{\{0,1\}} + g_{\{0,2\}}=2+p$. Thus, the existence of an isomorphism of a crystallization of a $3$-manifold that cyclically permutes the colors of the edges implies that $g_{\{0,2\}}$ (=$g_{\{1,3\}}$) and $p$ must have the same parity (supposing the permutation to be the identity). }
\end{remark}

\begin{remark}\label{remark:isomorphism}
{\rm
Let $(\Gamma,\gamma)$, $(\bar \Gamma,\bar \gamma)$, $I(\Gamma)$ and $\tilde I(\Gamma)$ be as in the proof of Lemma \ref{lemma:2}. Let $V(\Gamma)=\{x^{(0)}$, $x^{(1)}, \dots, x^{(2p-1)}\}$ and $\bar \Gamma$ be the crystallization of  $M_f$ constructed as in the proof of Lemma \ref{lemma:2} with an isomorphism $I(\Gamma)=\tilde I(\Gamma)$. Therefore, $\{x_0^{(0)},\dots, x_0^{(2p-1)},x_1^{(0)},\dots, x_1^{(2p-1)}$, $\dots,x_{d+1}^{(0)}, \dots, x_{d+1}^{(2p-1)}\}$ is the vertex set of $\bar \Gamma$. From the construction, it is clear that there exists an isomorphism $\bar I(\bar \Gamma):=(\bar I_V,\bar I_c): \bar \Gamma \to \bar \Gamma$ with $\bar I_c(i)=i+1$ (addition is module $d+2$) for $0\leq i \leq d+1$  and $\bar I_V(x_j^{(s)})=x_{j+1}^{(s)}$  (addition in the subscript is module $d+2$) for all $s\in\{0,\dots,2p-1\}$. Thus, we can use Lemma \ref{lemma:2} again and, we can construct a crystallization  $\bar {\bar {\Gamma}}$  of a mapping torus of a (PL) homeomorphism $g:M_f\to M_f$.
}
\end{remark}

\begin{lemma} \label{lemma:3}
Let $M_f$ be the mapping torus of $f:M\to M$ constructed as in Lemmas \ref{lemma:1} and \ref{lemma:2}. Then $\mathit{k}(M_f) \leq (d+2)p-1$. Moreover, if $d=3$ then $\mathcal{G}(M_f) \leq 1+5(p-g_{\{0,2\}})/2$.
\end{lemma}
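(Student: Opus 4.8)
The plan is to handle the two assertions separately: the gem-complexity bound is essentially immediate from the vertex count, while the regular genus bound for $d=3$ requires a judicious choice of cyclic permutation together with the structure recorded in Remarks \ref{remark:component} and \ref{remark:isomorphism}.

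For the gem-complexity, recall from Lemmas \ref{lemma:1} and \ref{lemma:2} that the constructed graph $(\bar\Gamma,\bar\gamma)$ is a crystallization of $M_f$ with exactly $2(d+2)p$ vertices. Since $\mathit{k}(M_f)=q-1$, where $2q$ is the minimum vertex-order among all crystallizations of $M_f$, the mere existence of this crystallization forces $2q\leq 2(d+2)p$, hence $\mathit{k}(M_f)\leq (d+2)p-1$. No further argument is required here.

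For the regular genus with $d=3$, note that $M_f$ is a closed $4$-manifold and $(\bar\Gamma,\bar\gamma)$ is a $5$-colored graph on $10p$ vertices with color set $\{0,1,2,3,4\}$. I would apply the Euler-characteristic formula \eqref{relation_chi} with ambient dimension $4$: for a cyclic permutation $\varepsilon=(\varepsilon_0,\dots,\varepsilon_4)$,
\[
\chi_\varepsilon(\bar\Gamma)=\sum_{i\in\mathbb{Z}_5}\bar g_{\{\varepsilon_i,\varepsilon_{i+1}\}}-3\cdot\frac{10p}{2}=\sum_{i\in\mathbb{Z}_5}\bar g_{\{\varepsilon_i,\varepsilon_{i+1}\}}-15p.
\]
The decisive choice is the \emph{pentagram} permutation $\varepsilon=(0,2,4,1,3)$, whose five consecutive pairs $\{0,2\},\{2,4\},\{1,4\},\{1,3\},\{0,3\}$ are precisely the five non-consecutive color pairs of $\bar\Gamma$ (those at cyclic distance $2$ in $\mathbb{Z}_5$). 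By Remark \ref{remark:component}, for a non-consecutive pair $\{j,k\}$ with $d=3$ one has $\bar\Gamma_{\{j,k\}}$ of type $2pC_4\sqcup\Gamma_{\{j_1,k_1\}}$, so $\bar g_{\{j,k\}}=2p+g_{\{j_1,k_1\}}$. The color-shift isomorphism of $\bar\Gamma$ from Remark \ref{remark:isomorphism} acts transitively on these five pairs, so they share one common value, and tracing the construction identifies the single leftover factor as a color shift of $\Gamma_{\{0,2\}}$, whose component count is $g_{\{0,2\}}$ (the hypothesis of Lemma \ref{lemma:1} endows $\Gamma$ with a color-shift symmetry, so all ``opposite'' bicolored subgraphs have equally many components). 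Therefore $\sum_i\bar g_{\{\varepsilon_i,\varepsilon_{i+1}\}}=5(2p+g_{\{0,2\}})=10p+5g_{\{0,2\}}$, whence $\chi_\varepsilon(\bar\Gamma)=5g_{\{0,2\}}-5p$ and
\[
\rho_\varepsilon(\bar\Gamma)=1-\frac{\chi_\varepsilon(\bar\Gamma)}{2}=1+\frac{5(p-g_{\{0,2\}})}{2}.
\]
Since $\mathcal{G}(M_f)\leq\rho(\bar\Gamma)\leq\rho_\varepsilon(\bar\Gamma)$, the claimed bound follows.

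I expect the main obstacle to be the step that identifies the leftover bicolored factor of each non-consecutive $\bar\Gamma_{\{j,k\}}$ with (a color shift of) $\Gamma_{\{0,2\}}$ rather than $\Gamma_{\{0,1\}}$, i.e.\ confirming that it is an \emph{opposite} rather than an \emph{adjacent} pair of the original $4$-colored graph; this is precisely what makes $g_{\{0,2\}}$, and not $g_{\{0,1\}}$, appear in the final formula. This is where the detailed bookkeeping of the construction in Lemmas \ref{lemma:1} and \ref{lemma:2} (which colors join consecutive copies and which colors survive inside a copy) must be carried out, after which the $\mathbb{Z}_5$-symmetry collapses all five non-consecutive pairs to a single computation. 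As a consistency check, for the standard crystallization of $S^2\times S^1$ one has $p=4$ and $g_{\{0,2\}}=2$, so the formula yields $\mathcal{G}(M_f)\leq 1+5(4-2)/2=6$, in agreement with the value established in Theorem \ref{theorem:genus1}.
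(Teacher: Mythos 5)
Your proof is correct and takes essentially the same route as the paper's: the gem-complexity bound falls out of the vertex count $2(d+2)p$, and for $d=3$ the paper uses exactly your pentagram permutation $\varepsilon=(0,2,4,1,3)$, observing that each of the five non-consecutive pairs satisfies $\bar g_{\{j,k\}}=2p+g_{\{0,2\}}$, so that $\chi_\varepsilon(\bar\Gamma)=5g_{\{0,2\}}-5p$ and $\rho_\varepsilon(\bar\Gamma)=1+5(p-g_{\{0,2\}})/2$. The only difference is that you supply a justification (via the color-shift symmetries of $\Gamma$ and $\bar\Gamma$) for why the leftover bicolored factor in each $\bar\Gamma_{\{j,k\}}$ has component count $g_{\{0,2\}}$ --- a point the paper asserts without proof, and which indeed holds because within the copy $\Gamma^j$ the colors $\{j,j+2\}$ correspond to the original pair $\{0,2\}$.
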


\begin{proof}
Let  $(\bar \Gamma,\bar \gamma)$ be the crystallization of the mapping torus $M_f$ constructed as in Lemmas \ref{lemma:1} and \ref{lemma:2}. Then, by the construction $\bar \Gamma$ has $(d+2)2p=2(d+2)p$ vertices, i.e., $\mathit{k}(M_f) \leq (d+2)p-1$.

If $d=3$ then $\bar{\Gamma}_{\{2j,2j+2\}}$ (here the addition is modulo $5$) is of the form $\Gamma_{\{0,2\}} \sqcup 2p C_4$, i.e.,  $\bar{g}_{\{2j,2j+2\}}=g_{\{0,2\}}+2p$ for $0 \leq j \leq 4$. Now, for the cyclic permutation $\varepsilon=(0,2,4,1,3)$, we have $\chi_{\varepsilon}(\bar \Gamma)= \sum_{i \in \mathbb{Z}_{5}}\bar g_{\{\varepsilon_i,\varepsilon_{i+1}\}}-3  \frac{\#V(\bar \Gamma)}{2}=5(g_{\{0,2\}}+2p)-3(5p)=5g_{\{0,2\}}-5p$. Thus, $\rho_{\varepsilon}(\bar \Gamma) = 1 - \chi_{\varepsilon}(\bar\Gamma)/2 = 1 + 5(p-g_{\{0,2\}})/2$ and hence $\mathcal{G}(M_f) \leq 1 + 5(p-g_{\{0,2\}})/2$.
\end{proof}

\begin{remark}\label{remark:sphere}
{\rm Observe that if $M \cong \mathbb{S}^d$ and $(\Gamma,\gamma)$ is the standard 2-vertex crystallization of $\mathbb{S}^d$ then by choosing two possible isomorphims as in Lemma \ref{lemma:2}, $\bar \Gamma$ gives the $2(d+2)$-vertex crystallization  of $\mathbb{S}^d \times \mathbb{S}^1$ and $\mathbb{S}^{\hspace{.2mm}d} \mbox{$\times\hspace{-2.6mm}_{-}$} \, \mathbb{S}^{\hspace{.1mm}1}$ constructed as in Figures \ref{fig:SdS1:1} and \ref{fig:SdS1:2} (which is already known from \cite{gg87}). From \cite{[FG$_2$]}, we know $\mathcal G(\mathbb{S}^d \times \mathbb{S}^1)=\mathcal G(\mathbb{S}^{\hspace{.2mm}d} \mbox{$\times\hspace{-2.6mm}_{-}$} \, \mathbb{S}^{\hspace{.1mm}1})=1$.
}
\end{remark}

\begin{example}\label{example:surface}
{\rm
For $1\leq h \leq 2$, let $(\Gamma,\gamma)$ be a crystallization of $U_h$. Let $I(\Gamma):=(I_V,I_c):\Gamma \to \Gamma$ be an isomorphism such that $I_c(i)=i+1$ (addition is modulo 3) for $i\in \Delta_2$, and $I_V$ sends the vertices $x^{(0)},x^{(1)},x^{(2)},x^{(3)}$ to $x^{(1)},x^{(2)},x^{(0)},x^{(3)}$ respectively for $h=1$ (cf. Figure \ref{fig:surface} (a)) and sends the vertices $x^{(0)},x^{(1)},x^{(2)},x^{(3)},x^{(4)},x^{(5)}$ to $x^{(5)},x^{(0)},x^{(3)},x^{(4)},x^{(2)},x^{(1)}$ respectively for $h=2$ (cf. Figure \ref{fig:surface} (b)). 
In both cases, if we choose $\tilde I(\Gamma)= I''(\Gamma) =I(\Gamma)$ as in Lemma \ref{lemma:2}, then there is a crystallization $(\bar \Gamma,\bar \gamma)$ of a mapping torus $(U_h)_f$ of a (PL) homeomorphism $f:U_h \to U_h$. By the construction, $\bar \Gamma$ has 16 (resp., 24) vertices for $h=1$ (resp., $h=2$). Therefore, $\mathit{k}((U_1)_f) \leq 7$ and $\mathit{k}((U_2)_f) \leq 11$. Now, by Proposition \ref{prop:gagliardi79b}, it is easy to prove that $\pi_1((U_h)_f)=\pi_1(U_h)\times\mathbb{Z}$. Therefore, from table III of \cite{ca98} (also available in \cite{bcg09}), we have $(U_1)_f=U_1\times \mathbb{S}^1$, $(U_2)_f=U_2\times \mathbb{S}^1$, $\mathit{k} (U_1\times \mathbb{S}^1)=7$ and $\mathit{k} (U_2\times \mathbb{S}^1)=11$. Further, from \cite{fg82} we know $\mathcal{G} (U_1\times \mathbb{S}^1)=2$ (resp., $\mathcal{G} (U_2\times \mathbb{S}^1)=3$) which is actually $\rho(\bar \Gamma)$.

Let $(\Gamma,\gamma)$ be a crystallization of $T_1$. Let $I(\Gamma):=(I_V,I_c):\Gamma \to \Gamma$ be an isomorphism such that $I_c(i)=i+1$ (addition is modulo 3) for $i\in \Delta_2$, and $I_V$ sends the vertices $x^{(0)},x^{(1)},x^{(2)},x^{(3)},x^{(4)},x^{(5)}$ to $x^{(3)},x^{(4)},x^{(1)},x^{(2)},x^{(5)},x^{(0)}$ respectively (cf. Figure \ref{fig:surface} (c)).
If we choose $\tilde I(\Gamma)= I''(\Gamma) =I(\Gamma)$ as in Lemma \ref{lemma:2}, then there is a crystallization $(\bar \Gamma,\bar \gamma)$ of a mapping torus $(T_1)_f$ of a (PL) homeomorphism $f:T_1 \to T_1$. By the construction, $\bar \Gamma$ has $24$ vertices. Observe that $(\bar \Gamma,\bar \gamma)$ is actually the crystallization
obtained in \cite[Figure 5]{bd14}. Thus, $(\bar \Gamma,\bar \gamma)$  is the unique minimal crystallization of $T_1 \times \mathbb{S}^1$.  Further, from \cite{fg82} we know $\mathcal{G} (T_1\times \mathbb{S}^1)=3$ which is actually $\rho(\bar \Gamma)$.
}\end{example}

\begin{figure}[ht]
\tikzstyle{vert}=[circle, draw, fill=black!100, inner sep=0pt, minimum width=4pt]
\tikzstyle{vertex}=[circle, draw, fill=black!00, inner sep=0pt, minimum width=4pt]
\tikzstyle{ver}=[]
\tikzstyle{extra}=[circle, draw, fill=black!50, inner sep=0pt, minimum width=2pt]
\tikzstyle{edge} = [draw,thick,-]
\centering
\begin{tikzpicture}[scale=.4]
\begin{scope}[shift={(-13,0)}]
\foreach \x/\y/\z in {60/$x^{(0)}$/a0,240/$x^{(2)}$/a2}{
\node[ver] () at (\x:4){\tiny{\y}};
\node[vert] (\z) at (\x:3){};
} 

\foreach \x/\y/\z in
{120/$x^{(1)}$/a1,300/$x^{(3)}$/a3}{ 
\node[ver] () at (\x:4){\tiny{\y}};
\node[vert] (\z) at (\x:3){};
} 

\foreach \x/\y in{a0/a1,a1/a2,a2/a3,a3/a0,a0/a2,a1/a3}{\path[edge] (\x) -- (\y);}

\foreach \x/\y in{a0/a1,a2/a3}{\draw [line width=2pt, line cap=rectengle, dash pattern=on 1pt off 1]  (\x) -- (\y);}

\foreach \x/\y in{a1/a2,a3/a0}{\draw[line width=3pt, line cap=round, dash pattern=on 0pt off 2\pgflinewidth] (\x) -- (\y);}
\end{scope}

\begin{scope}[shift={(-13,-10)}]
\foreach \x/\y/\z in {60/$x^{(1)}$/a0,240/$x^{(0)}$/a2}{
\node[ver] () at (\x:4){\tiny{\y}};
\node[vert] (\z) at (\x:3){};
} 

\foreach \x/\y/\z in
{120/$x^{(2)}$/a1,300/$x^{(3)}$/a3}{ 
\node[ver] () at (\x:4){\tiny{\y}};
\node[vert] (\z) at (\x:3){};
} 

\foreach \x/\y in{a0/a1,a1/a2,a2/a3,a3/a0,a0/a2,a1/a3}{\path[edge] (\x) -- (\y);}

\foreach \x/\y in{a0/a2,a1/a3}{\draw [line width=2pt, line cap=rectengle, dash pattern=on 1pt off 1]  (\x) -- (\y);}

\foreach \x/\y in{a0/a1,a2/a3}{\draw[line width=3pt, line cap=round, dash pattern=on 0pt off 2\pgflinewidth] (\x) -- (\y);}

\node[ver] () at (0,-5.4){\tiny{$(a)$ A crystallization}};
\node[ver] () at (0,-6){\tiny{of $U_1$}};
\end{scope}

\begin{scope}[shift={(-5,0)}]
\foreach \x/\y/\z in {0/$x^{(0)}$/a0,120/$x^{(2)}$/a2,240/$x^{(4)}$/a4}{
\node[ver] () at (\x:4){\tiny{\y}};
\node[vert] (\z) at (\x:3){};
} 

\foreach \x/\y/\z in
{60/$x^{(1)}$/a1,180/$x^{(3)}$/a3,300/$x^{(5)}$/a5}{ 
\node[ver] () at (\x:4){\tiny{\y}};
\node[vert] (\z) at (\x:3){};
} 

\foreach \x/\y in{a0/a1,a1/a2,a2/a3,a3/a4,a4/a5,a5/a0,a0/a3,a1/a5,a2/a4}{\path[edge] (\x) -- (\y);}

\foreach \x/\y in{a0/a1,a2/a3,a4/a5}{\draw [line width=2pt, line cap=rectengle, dash pattern=on 1pt off 1]  (\x) -- (\y);}

\foreach \x/\y in{a1/a2,a3/a4,a5/a0}{\draw[line width=3pt, line cap=round, dash pattern=on 0pt off 2\pgflinewidth] (\x) -- (\y);}
\end{scope}

\begin{scope}[shift={(-5,-10)}]
\foreach \x/\y/\z in {0/$x^{(5)}$/a0,120/$x^{(3)}$/a2,240/$x^{(2)}$/a4}{
\node[ver] () at (\x:4){\tiny{\y}};
\node[vert] (\z) at (\x:3){};
} 

\foreach \x/\y/\z in
{60/$x^{(0)}$/a1,180/$x^{(4)}$/a3,300/$x^{(1)}$/a5}{ 
\node[ver] () at (\x:4){\tiny{\y}};
\node[vert] (\z) at (\x:3){};
} 

\foreach \x/\y in{a0/a1,a1/a2,a2/a3,a3/a4,a4/a5,a5/a0,a0/a3,a1/a5,a2/a4}{\path[edge] (\x) -- (\y);}

\foreach \x/\y in{a0/a1,a2/a3,a4/a5}{\draw[line width=3pt, line cap=round, dash pattern=on 0pt off 2\pgflinewidth] (\x) -- (\y);}

\foreach \x/\y in{a0/a3,a1/a5,a2/a4}{\draw [line width=2pt, line cap=rectengle, dash pattern=on 1pt off 1]  (\x) -- (\y);}

\node[ver] () at (0,-5.4){\tiny{$(b)$ A crystallization}};
\node[ver] () at (0,-6){\tiny{of $U_2$}};
\end{scope}

\begin{scope}[shift={(5,0)}]
\foreach \x/\y/\z in {0/$x^{(0)}$/a0,120/$x^{(2)}$/a2,240/$x^{(4)}$/a4}{
\node[ver] () at (\x:4){\tiny{\y}};
\node[vertex] (\z) at (\x:3){};
} 

\foreach \x/\y/\z in
{60/$x^{(1)}$/a1,180/$x^{(3)}$/a3,300/$x^{(5)}$/a5}{ 
\node[ver] () at (\x:4){\tiny{\y}};
\node[vert] (\z) at (\x:3){};
} 

\foreach \x/\y in{a0/a1,a1/a2,a2/a3,a3/a4,a4/a5,a5/a0,a0/a3,a1/a4,a2/a5}{\path[edge] (\x) -- (\y);}

\foreach \x/\y in{a0/a1,a2/a3,a4/a5}{\draw [line width=2pt, line cap=rectengle, dash pattern=on 1pt off 1]  (\x) -- (\y);}

\foreach \x/\y in{a1/a2,a3/a4,a5/a0}{\draw[line width=3pt, line cap=round, dash pattern=on 0pt off 2\pgflinewidth] (\x) -- (\y);}
\end{scope}

\begin{scope}[shift={(5,-10)}]
\foreach \x/\y/\z in {0/$x^{(3)}$/a0,120/$x^{(1)}$/a2,240/$x^{(5)}$/a4}{
\node[ver] () at (\x:4){\tiny{\y}};
\node[vert] (\z) at (\x:3){};
} 

\foreach \x/\y/\z in
{60/$x^{(4)}$/a1,180/$x^{(2)}$/a3,300/$x^{(0)}$/a5}{ 
\node[ver] () at (\x:4){\tiny{\y}};
\node[vertex] (\z) at (\x:3){};
} 

\foreach \x/\y in{a0/a1,a1/a2,a2/a3,a3/a4,a4/a5,a5/a0,a0/a3,a1/a4,a2/a5}{\path[edge] (\x) -- (\y);}

\foreach \x/\y in{a0/a1,a2/a3,a4/a5}{
\draw[line width=3pt, line cap=round, dash pattern=on 0pt off 2\pgflinewidth] (\x) -- (\y);}

\foreach \x/\y in{a0/a3,a1/a4,a2/a5}{\draw [line width=2pt, line cap=rectengle, dash pattern=on 1pt off 1]  (\x) -- (\y);}

\node[ver] () at (0,-5.4){\tiny{$(c)$ A crystallization}};
\node[ver] () at (0,-6){\tiny{of $T_1$}};
\end{scope}

\begin{scope}[shift={(16,0)}]
\foreach \x/\y/\z in {0/$x^{(0)}$/a0,72/$x^{(2)}$/a2,144/$x^{(4)}$/a4,216/$x^{(6)}$/a6,288/$x^{(8)}$/a8}{
\node[ver] () at (\x:4.5){\tiny{\y}};
\node[vertex] (\z) at (\x:3.5){};
} 

\foreach \x/\y/\z in
{36/$x^{(1)}$/a1,108/$x^{(3)}$/a3,180/$x^{(5)}$/a5,252/$x^{(7)}$/a7,324/$x^{(9)}$/a9}{ 
\node[ver] () at (\x:4.5){\tiny{\y}};
\node[vert] (\z) at (\x:3.5){};
} 

\foreach \x/\y in{a0/a1,a1/a2,a2/a3,a3/a4,a4/a5,a5/a6,a6/a7,a7/a8,a8/a9,a9/a0,a0/a5,a1/a8,a9/a2,a6/a3,a4/a7}{\path[edge] (\x) -- (\y);}

\foreach \x/\y in{a0/a1,a2/a3,a4/a5,a6/a7,a8/a9}{\draw [line width=2pt, line cap=rectengle, dash pattern=on 1pt off 1]  (\x) -- (\y);}

\foreach \x/\y in{a1/a2,a3/a4,a5/a6,a7/a8,a9/a0}{\draw[line width=3pt, line cap=round, dash pattern=on 0pt off 2\pgflinewidth] (\x) -- (\y);}
\end{scope}

\begin{scope}[shift={(16,-10)}]
\foreach \x/\y/\z in {0/$x^{(3)}$/a0,72/$x^{(7)}$/a2,144/$x^{(1)}$/a4,216/$x^{(9)}$/a6,288/$x^{(5)}$/a8}{
\node[ver] () at (\x:4.5){\tiny{\y}};
\node[vert] (\z) at (\x:3.5){};
} 

\foreach \x/\y/\z in
{36/$x^{(4)}$/a1,108/$x^{(8)}$/a3,180/$x^{(2)}$/a5,252/$x^{(0)}$/a7,324/$x^{(6)}$/a9}{ 
\node[ver] () at (\x:4.5){\tiny{\y}};
\node[vertex] (\z) at (\x:3.5){};
} 

\foreach \x/\y in{a0/a1,a1/a2,a2/a3,a3/a4,a4/a5,a5/a6,a6/a7,a7/a8,a8/a9,a9/a0,a0/a5,a1/a8,a9/a2,a6/a3,a4/a7}{\path[edge] (\x) -- (\y);}

\foreach \x/\y in{a0/a1,a2/a3,a4/a5,a6/a7,a8/a9}{
\draw[line width=3pt, line cap=round, dash pattern=on 0pt off 2\pgflinewidth] (\x) -- (\y);}

\foreach \x/\y in{a0/a5,a1/a8,a9/a2,a6/a3,a4/a7}{\draw [line width=2pt, line cap=rectengle, dash pattern=on 1pt off 1]  (\x) -- (\y);}
\node[ver] () at (0,-5.4){\tiny{$(d)$ A crystallization of}};
\node[ver] () at (0,-6){\tiny{$T_2$}};
\end{scope}

 \begin{scope}[shift={(-10,0)}]
\node[ver] (308) at (-4,-5){$0$};
\node[ver] (300) at (2,-5){$1$};
\node[ver] (301) at (8,-5){$2$};
\node[ver] (309) at (0,-5){};
\node[ver] (304) at (6,-5){};
\node[ver] (305) at (12,-5){};
\path[edge] (300) -- (304);
\path[edge] (308) -- (309);
\draw [line width=2pt, line cap=rectengle, dash pattern=on 1pt off 1]  (308) -- (309);
\draw [line width=3pt, line cap=round, dash pattern=on 0pt off 2\pgflinewidth]  (300) -- (304);
\path[edge] (301) -- (305);
\end{scope}

\end{tikzpicture}
\caption{Crystallizations of $T_g$ and $U_h$ for $1\leq g,h\leq 2$, and their corresponding isomorphisms.}\label{fig:surface}
\end{figure}
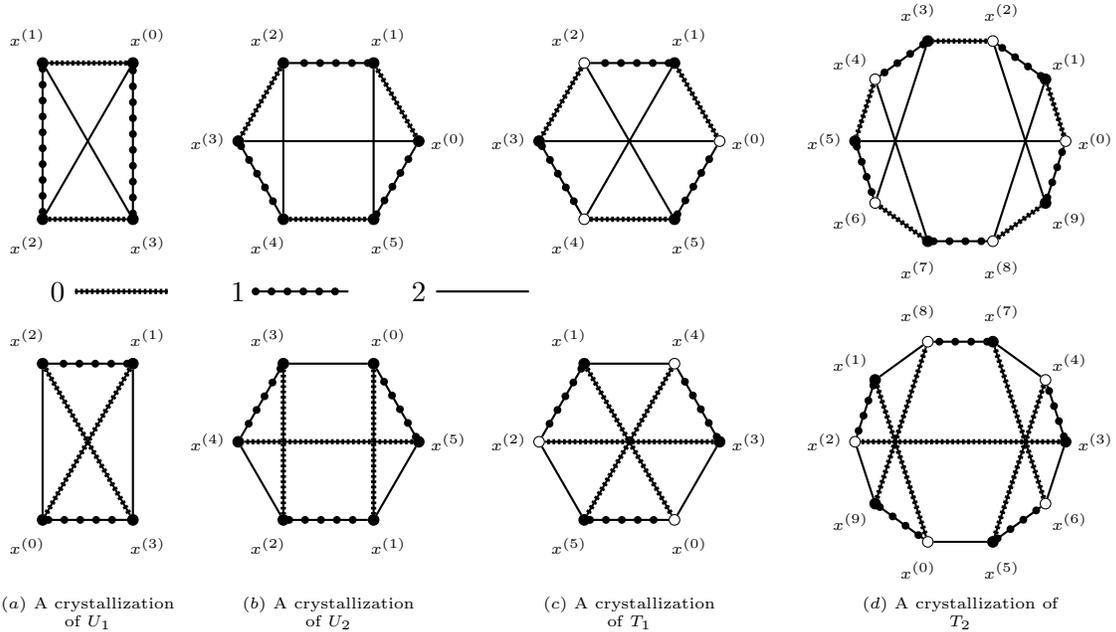

\begin{remark}
{\rm In Remark \ref{remark:sphere} and Example \ref{example:surface}, we have constructed known crystallizations of $\mathbb{S}^d \times \mathbb{S}^1$ for $d\geq 1$, $U_h \times \mathbb{S}^1$ for $1\leq h \leq 2$ and $T_1 \times \mathbb{S}^1$. In \cite{gg93}, the authors constructed colored graphs representing product of manifolds. But their construction does not give the crystallization directly, we shall have to remove the dipoles to obtain a crystallization. Moreover, if  $(\Gamma,\gamma)$ is a $2p$-vertex crystallization a PL $d$-manifold $M$ then their construction gives a  $(d+2)$-colored graph which represents $M\times S^1$, and the number of vertices is $4(d+1)p$. The advantage of our construction is that it directly gives a crystallization of a mapping torus of a (PL) homeomorphism $f:M\to M$, and the number of vertices is $2(d+2)p$. In particular, if $M=\mathbb{S}^d$ for $d\geq 1$, $U_h$ for $1\leq h \leq 2$ and $T_1$ then our construction gives a crystallization of $M\times \mathbb{S}^1$ with $2(d+2)p$, but the construction in \cite{gg93} gives a colored graph (which is not a crystallization) with $4(d+1)p$ vertices. 
}
\end{remark}

As we have pointed out in Remark \ref{remark:sphere} and Example \ref{example:surface} that the mapping torus $M_f$ is PL homeomorphic to $M\times \mathbb{S}^1$ if $M$ is real projective plane, Klein bottle, 2-torus or $d$-sphere $\mathbb{S}^d$, we believe the following is true.

\begin{conjecture}\label{conjecture:true}
If $M=\mathbb{RP}^3$ or $\mathbb{S}^{d-1}\times \mathbb{S}^1$ for $d\geq 2$, then the mapping torus $M_f$ (constructed in Theorems $\ref{theorem:genus1}$ and $\ref{theorem:genus2}$) is PL homeomorphic to $M\times \mathbb{S}^1$.
\end{conjecture}

This would follow if one can show that the (PL) homeomorphism $f:M\to M$ is (PL) isotopic to identity, i.e., there exists a PL map $H:M\times [0,1]\to M$
such that $H(*,0)=f$, $H(*,1)=Id$ (identity) and for each $t$, $H(*,t)$ is a PL homeomorphism from $M$ to itself.

\begin{figure}[ht]
\tikzstyle{vert}=[circle, draw, fill=black!100, inner sep=0pt, minimum width=4pt]
\tikzstyle{vertex}=[circle, draw, fill=black!00, inner sep=0pt, minimum width=4pt]
\tikzstyle{ver}=[]
\tikzstyle{extra}=[circle, draw, fill=black!50, inner sep=0pt, minimum width=2pt]
\tikzstyle{edge} = [draw,thick,-]
\centering
\begin{tikzpicture}[scale=.73]

\begin{scope}[shift={(0,0)}]
\foreach \x/\y/\z in {-1/5/0,1/5/1,-1/3/2,1/3/3,-1/1/4,1/1/5,-1/-1/6,1/-1/7,-1/-3/8,1/-3/9,-1/-5/10,1/-5/11}{\node[vert] (a\z) at (\x,\y){};}
\foreach \x/\y in {-1/-1.5,-1/-2,-1/-2.5,1/-1.5,1/-2,1/-2.5}{\node[extra] () at (\x,\y){};}
\foreach \x/\y in {a0/a2,a2/a4,a4/a6,a8/a10,a1/a3,a3/a5,a5/a7,a9/a11}
{\draw [edge]  (\x) -- (\y);}

\foreach \x/\y/\z in {a0/a1/4.5,a0/a1/5.5,a2/a3/2.5,a2/a3/3.5,a4/a5/.5,a4/a5/1.5,a6/a7/-.5,a6/a7/-1.5,a8/a9/-2.5,a8/a9/-3.5,a10/a11/-4.5,a10/a11/-5.5}{
\draw[edge]  plot [smooth,tension=1.5] coordinates{(\x)(0,\z)(\y)};}

\draw[edge]  plot [smooth,tension=1.2] coordinates{(a0)(-2.5,3.5)(-2.5,-3.5)(a10)};
\draw[edge]  plot [smooth,tension=1.2] coordinates{(a1)(2.5,3.5)(2.5,-3.5)(a11)};

\node[ver] () at (-1.5,5){\tiny{$ x^{(0)}$}};
\node[ver] () at (1.5,5){\tiny{$ x^{(1)}$}};
\node[ver] () at (-1.5,3){\tiny{$ x^{(2)}$}};
\node[ver] () at (1.5,3){\tiny{$ x^{(3)}$}};
\node[ver] () at (-1.5,1){\tiny{$ x^{(4)}$}};
\node[ver] () at (1.5,1){\tiny{$ x^{(5)}$}};
\node[ver] () at (-1.5,-1){\tiny{$ x^{(6)}$}};
\node[ver] () at (1.5,-1){\tiny{$ x^{(7)}$}};
\node[ver] () at (-1.7,-3){\tiny{$ x^{(2d-2)}$}};
\node[ver] () at (1.7,-3){\tiny{$ x^{(2d-1)}$}};
\node[ver] () at (-1.5,-5){\tiny{$ x^{(2d)}$}};
\node[ver] () at (1.7,-5){\tiny{$ x^{(2d+1)}$}};

\node[ver] () at (0,5.1){\tiny{$0,1,\dots,$}};
\node[ver] () at (0,4.8){\tiny{$d-2$}};
\node[ver] () at (0,3.1){\tiny{$1,2,\dots,$}};
\node[ver] () at (0,2.8){\tiny{$d-1$}};
\node[ver] () at (0,1){\tiny{$2,3,\dots,d$}};
\node[ver] () at (0,-1){\tiny{$3,\dots,d,0$}};
\node[ver] () at (0,-2.8){\tiny{$d-1,d,$}};
\node[ver] () at (0,-3.1){\tiny{$0,\dots,d-4$}};
\node[ver] () at (0,-4.9){\tiny{$d,0,1,\dots,$}};
\node[ver] () at (0,-5.2){\tiny{$d-3$}};

\foreach \x/\y/\z in {-1.3/4/$d$,1.3/4/$d$,-1.3/2/0,1.3/2/0,-1.3/0/1,1.3/0/1,-1.5/-4/$d-2$,1.5/-4/$d-2$,-2.5/0/$d-1$,2.5/0/$d-1$}{
\node[ver] () at (\x,\y){\tiny{\z}};}
\end{scope}

\begin{scope}[shift={(8,0)}]
\foreach \x/\y/\z in {-1/5/0,1/5/1,-1/3/2,1/3/3,-1/1/4,1/1/5,-1/-1/6,1/-1/7,-1/-3/8,1/-3/9,-1/-5/10,1/-5/11}{\node[vert] (a\z) at (\x,\y){};}
\foreach \x/\y in {-1/-1.5,-1/-2,-1/-2.5,1/-1.5,1/-2,1/-2.5}{\node[extra] () at (\x,\y){};}
\foreach \x/\y in {a0/a2,a2/a4,a4/a6,a8/a10,a1/a3,a3/a5,a5/a7,a9/a11}
{\draw [edge]  (\x) -- (\y);}

\foreach \x/\y/\z in {a0/a1/4.5,a0/a1/5.5,a2/a3/2.5,a2/a3/3.5,a4/a5/.5,a4/a5/1.5,a6/a7/-.5,a6/a7/-1.5,a8/a9/-2.5,a8/a9/-3.5,a10/a11/-4.5,a10/a11/-5.5}{
\draw[edge]  plot [smooth,tension=1.5] coordinates{(\x)(0,\z)(\y)};}

\draw[edge]  plot [smooth,tension=1.2] coordinates{(a0)(-2.5,3.5)(-2.5,-3.5)(a10)};
\draw[edge]  plot [smooth,tension=1.2] coordinates{(a1)(2.5,3.5)(2.5,-3.5)(a11)};

\node[ver] () at (-1.5,5){\tiny{$ x^{(2)}$}};
\node[ver] () at (1.5,5){\tiny{$ x^{(3)}$}};
\node[ver] () at (-1.5,3){\tiny{$ x^{(4)}$}};
\node[ver] () at (1.5,3){\tiny{$ x^{(5)}$}};
\node[ver] () at (-1.5,1){\tiny{$ x^{(6)}$}};
\node[ver] () at (1.5,1){\tiny{$ x^{(7)}$}};
\node[ver] () at (-1.5,-1){\tiny{$ x^{(8)}$}};
\node[ver] () at (1.5,-1){\tiny{$ x^{(9)}$}};
\node[ver] () at (-1.5,-3){\tiny{$ x^{(2d)}$}};
\node[ver] () at (1.7,-3){\tiny{$ x^{(2d+1)}$}};
\node[ver] () at (-1.5,-5){\tiny{$ x^{(0)}$}};
\node[ver] () at (1.5,-5){\tiny{$ x^{(1)}$}};

\node[ver] () at (0,5.1){\tiny{$1,2,\dots,$}};
\node[ver] () at (0,4.8){\tiny{$d-1$}};
\node[ver] () at (0,3){\tiny{$2,3,\dots,d$}};
\node[ver] () at (0,1){\tiny{$3,\dots,d,0$}};
\node[ver] () at (0,-1){\tiny{$4,\dots,d,0,1$}};
\node[ver] () at (0,-2.9){\tiny{$d,0,1,\dots,$}};
\node[ver] () at (0,-3.2){\tiny{$d-3$}};
\node[ver] () at (0,-4.9){\tiny{$0,1,\dots,$}};
\node[ver] () at (0,-5.2){\tiny{$d-2$}};

\foreach \x/\y/\z in {-1.3/4/0,1.3/4/0,-1.3/2/1,1.3/2/1,-1.3/0/2,1.3/0/2,-1.5/-4/$d-1$,1.5/-4/$d-1$,-2.5/0/$d$,2.5/0/$d$}{
\node[ver] () at (\x,\y){\tiny{\z}};}
\end{scope}
\end{tikzpicture}
\vspace{-5mm}
\caption{The standard $2(d+1)$-vertex crystallization $(\Gamma,\gamma)$ of $\mathbb{S}^{d-1}\times \mathbb{S}^1$ (resp., $\mathbb{S}^{\hspace{.2mm}d-1} \mbox{$\times\hspace{-2.8mm}_{-}$} \, \mathbb{S}^{\hspace{.1mm}1}$) if $d$ is odd (resp., even), and an isomorphism $I(\Gamma):\Gamma \to \Gamma$ such that $I_V(x^{(p)})=x^{(p+2)}$ (addition is modulo $2d+2$) for $p \in \{0,1,\dots,2d+1\}$ and $I_c(i)=i+1$ (addition is modulo $d+1$) for $i\in \Delta_d$.}\label{fig:SdS1:1}
\end{figure}
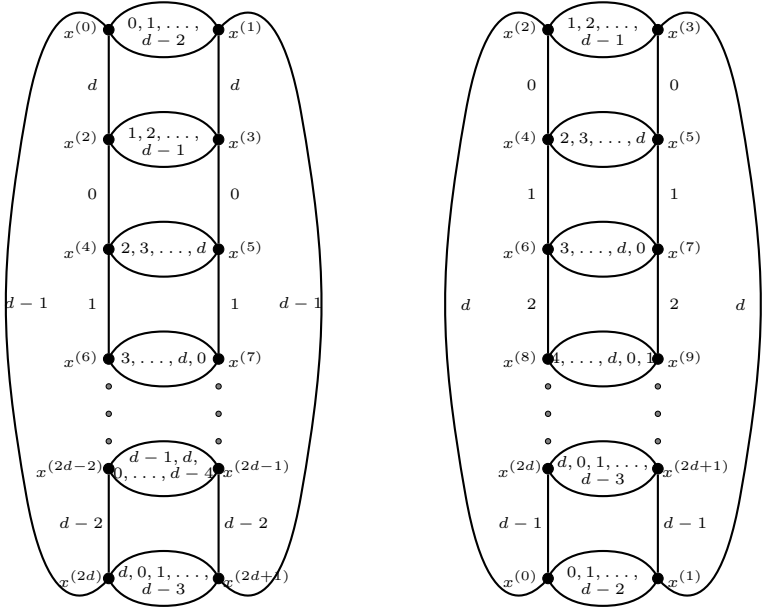

\begin{remark}{\rm
Let $(\Gamma,\gamma)$ be a crystallization of $T_2$. Let $I(\Gamma):=(I_V,I_c):\Gamma \to \Gamma$ be an isomorphism such that $I_c(i)=i+1$ (addition is modulo 3) for $i\in \Delta_2$, and $I_V$ sends the vertices $x^{(0)},x^{(1)},x^{(2)},x^{(3)},x^{(4)},x^{(5)},x^{(6)},x^{(7)},x^{(8)},x^{(9)}$ to $x^{(3)}$, $x^{(4)}$, $x^{(7)}$, $x^{(8)}$, $x^{(1)}$, $x^{(2)}$, $x^{(9)}$, $x^{(0)}$, $x^{(5)}$, $x^{(6)}$ respectively (cf. Figure \ref{fig:surface} (d)). By the symmetry of the crystallization $\Gamma$, there exist isomorphisms $I'(\Gamma),I''(\Gamma):\Gamma \to \Gamma$ as in Lemma \ref{lemma:2}. Thus, by Lemma \ref{lemma:2}, there is a crystallization $(\bar \Gamma,\bar \gamma)$ of an orientable (resp., a non-orientable) mapping torus $(T_2)_f$ (resp., $(T_2)_{\tilde f}$) of a (PL) homeomorphism $f:T_2 \to T_2$ (resp., $\tilde f:T_2 \to T_2$). For $g\geq 3$ (resp., $h\geq 3$), we do not know any crystallization $(\Gamma,\gamma)$ of $T_g$ (resp., $U_h$) with such a simple isomorphism $I(\Gamma)$ (i.e., the isomorphism which cyclically permutes the colors of the edges (supposing the permutation to be the identity)).

But, by the discussions as in Remark \ref{remark:isomorphism}, if we can construct a crystallization $(\bar \Gamma,\bar \gamma)$ of a mapping torus $M_f$ then we can apply  Lemma \ref{lemma:2} again and, we can construct a crystallization  $\bar {\bar {\Gamma}}$  of  mapping tori of some (PL) homeomorphisms $g:M_f\to M_f$. Thus, our constructions give crystallizations of mapping tori for a large class of PL $d$-manifolds.
}
\end{remark}

\medskip

\noindent {\em Proof of Theorem}  \ref{theorem:genus1}.
Let $M= \mathbb{S}^{d-1}\times \mathbb{S}^1$ or $\mathbb{S}^{\hspace{.2mm}d-1} \mbox{$\times \hspace{-2.6mm}_{-}$} \, \mathbb{S}^{\hspace{.1mm}1}$. Let $(\Gamma,\gamma)$ be the standard $2(d+1)$-vertex crystallization of $M$. Let $I(\Gamma):\Gamma \to \Gamma$ be an isomorphism  such that $I_V(x^{(p)})=x^{(p+2)}$ (addition is modulo $2d+2$) for $p \in \{0,1,\dots,2d+1\}$ and $I_c(i)=i+1$ (addition is modulo $d+1$) for $i\in \Delta_d$ (cf. Figures \ref{fig:SdS1:1} and \ref{fig:SdS1:2}). If we choose $\tilde I(\Gamma)=I''(\Gamma)=I(\Gamma)$ as in the proof of Lemma \ref{lemma:2}, then there is a crystallization $(\bar \Gamma,\bar \gamma)$ of a mapping torus $M_f$ of a (PL) homeomorphism $f:M\to M$ with $(d+2)\cdot 2(d+1)=2(d+1)(d+2)$. From the construction, $M_f$ is orientable (resp., non-orientable) if $M= \mathbb{S}^{d-1}\times \mathbb{S}^1$ (resp., $\mathbb{S}^{\hspace{.2mm}d-1} \mbox{$\times \hspace{-2.6mm}_{-}$} \, \mathbb{S}^{\hspace{.1mm}1}$).

\begin{figure}[ht]
\tikzstyle{vert}=[circle, draw, fill=black!100, inner sep=0pt, minimum width=4pt]
\tikzstyle{vertex}=[circle, draw, fill=black!00, inner sep=0pt, minimum width=4pt]
\tikzstyle{ver}=[]
\tikzstyle{extra}=[circle, draw, fill=black!50, inner sep=0pt, minimum width=2pt]
\tikzstyle{edge} = [draw,thick,-]
\centering
\begin{tikzpicture}[scale=.73]

\begin{scope}[shift={(0,0)}]
\foreach \x/\y/\z in {-1/5/0,1/5/1,-1/3/2,1/3/3,-1/1/4,1/1/5,-1/-1/6,1/-1/7,-1/-3/8,1/-3/9,-1/-5/10,1/-5/11}{\node[vert] (a\z) at (\x,\y){};}
\foreach \x/\y in {-1/-1.5,-1/-2,-1/-2.5,1/-1.5,1/-2,1/-2.5}{\node[extra] () at (\x,\y){};}
\foreach \x/\y in {a0/a2,a2/a4,a4/a6,a8/a10,a1/a3,a3/a5,a5/a7,a9/a11}
{\draw [edge]  (\x) -- (\y);}

\foreach \x/\y/\z in {a0/a1/4.5,a0/a1/5.5,a2/a3/2.5,a2/a3/3.5,a4/a5/.5,a4/a5/1.5,a6/a7/-.5,a6/a7/-1.5,a8/a9/-2.5,a8/a9/-3.5,a10/a11/-4.5,a10/a11/-5.5}{
\draw[edge]  plot [smooth,tension=1.5] coordinates{(\x)(0,\z)(\y)};}

\draw[edge]  plot [smooth,tension=1.5] coordinates{(a0)(-2.5,3.5)(-2.5,-4.5)(a11)};
\draw[edge]  plot [smooth,tension=1.5] coordinates{(a1)(2.5,3.5)(2.5,-4.5)(a10)};

\node[ver] () at (-1.5,5){\tiny{$ x^{(0)}$}};
\node[ver] () at (1.5,5){\tiny{$ x^{(1)}$}};
\node[ver] () at (-1.5,3){\tiny{$ x^{(2)}$}};
\node[ver] () at (1.5,3){\tiny{$ x^{(3))}$}};
\node[ver] () at (-1.5,1){\tiny{$ x^{(4)}$}};
\node[ver] () at (1.5,1){\tiny{$ x^{(5)}$}};
\node[ver] () at (-1.5,-1){\tiny{$ x^{(6)}$}};
\node[ver] () at (1.5,-1){\tiny{$ x^{(7)}$}};
\node[ver] () at (-1.7,-3){\tiny{$ x^{(2d-2)}$}};
\node[ver] () at (1.7,-3){\tiny{$ x^{(2d-1)}$}};
\node[ver] () at (-1.5,-5){\tiny{$ x^{(2d)}$}};
\node[ver] () at (1.7,-5){\tiny{$ x^{(2d+1)}$}};

\node[ver] () at (0,5.1){\tiny{$0,1,\dots,$}};
\node[ver] () at (0,4.8){\tiny{$d-2$}};
\node[ver] () at (0,3.1){\tiny{$1,2,\dots,$}};
\node[ver] () at (0,2.8){\tiny{$d-1$}};
\node[ver] () at (0,1){\tiny{$2,3,\dots,d$}};
\node[ver] () at (0,-1){\tiny{$3,\dots,d,0$}};
\node[ver] () at (0,-2.8){\tiny{$d-1,d,$}};
\node[ver] () at (0,-3.1){\tiny{$0,\dots,d-4$}};
\node[ver] () at (0,-4.9){\tiny{$d,0,1,\dots,$}};
\node[ver] () at (0,-5.2){\tiny{$d-3$}};

\foreach \x/\y/\z in {-1.3/4/$d$,1.3/4/$d$,-1.3/2/0,1.3/2/0,-1.3/0/1,1.3/0/1,-1.5/-4/$d-2$,1.5/-4/$d-2$,-2.5/0/$d-1$,2.5/0/$d-1$}{
\node[ver] () at (\x,\y){\tiny{\z}};}
\end{scope}

\begin{scope}[shift={(8,0)}]
\foreach \x/\y/\z in {-1/5/0,1/5/1,-1/3/2,1/3/3,-1/1/4,1/1/5,-1/-1/6,1/-1/7,-1/-3/8,1/-3/9,-1/-5/10,1/-5/11}{\node[vert] (a\z) at (\x,\y){};}
\foreach \x/\y in {-1/-1.5,-1/-2,-1/-2.5,1/-1.5,1/-2,1/-2.5}{\node[extra] () at (\x,\y){};}
\foreach \x/\y in {a0/a2,a2/a4,a4/a6,a8/a10,a1/a3,a3/a5,a5/a7,a9/a11}
{\draw [edge]  (\x) -- (\y);}

\foreach \x/\y/\z in {a0/a1/4.5,a0/a1/5.5,a2/a3/2.5,a2/a3/3.5,a4/a5/.5,a4/a5/1.5,a6/a7/-.5,a6/a7/-1.5,a8/a9/-2.5,a8/a9/-3.5,a10/a11/-4.5,a10/a11/-5.5}{
\draw[edge]  plot [smooth,tension=1.5] coordinates{(\x)(0,\z)(\y)};}

\draw[edge]  plot [smooth,tension=1.5] coordinates{(a0)(-2.5,3.5)(-2.5,-4.5)(a11)};
\draw[edge]  plot [smooth,tension=1.5] coordinates{(a1)(2.5,3.5)(2.5,-4.5)(a10)};

\node[ver] () at (-1.5,5){\tiny{$ x^{(2)}$}};
\node[ver] () at (1.5,5){\tiny{$ x^{(3)}$}};
\node[ver] () at (-1.5,3){\tiny{$ x^{(4)}$}};
\node[ver] () at (1.5,3){\tiny{$ x^{(5)}$}};
\node[ver] () at (-1.5,1){\tiny{$ x^{(6)}$}};
\node[ver] () at (1.5,1){\tiny{$ x^{(7)}$}};
\node[ver] () at (-1.5,-1){\tiny{$ x^{(8)}$}};
\node[ver] () at (1.5,-1){\tiny{$ x^{(9)}$}};
\node[ver] () at (-1.5,-3){\tiny{$ x^{(2d)}$}};
\node[ver] () at (1.7,-3){\tiny{$ x^{(2d+1)}$}};
\node[ver] () at (-1.5,-5){\tiny{$ x^{(0)}$}};
\node[ver] () at (1.5,-5){\tiny{$ x^{(1)}$}};

\node[ver] () at (0,5.1){\tiny{$1,2,\dots,$}};
\node[ver] () at (0,4.8){\tiny{$d-1$}};
\node[ver] () at (0,3){\tiny{$2,3,\dots,d$}};
\node[ver] () at (0,1){\tiny{$3,\dots,d,0$}};
\node[ver] () at (0,-1){\tiny{$4,\dots,d,0,1$}};
\node[ver] () at (0,-2.9){\tiny{$d,0,1,\dots,$}};
\node[ver] () at (0,-3.2){\tiny{$d-3$}};
\node[ver] () at (0,-4.9){\tiny{$0,1,\dots,$}};
\node[ver] () at (0,-5.2){\tiny{$d-2$}};

\foreach \x/\y/\z in {-1.3/4/0,1.3/4/0,-1.3/2/1,1.3/2/1,-1.3/0/2,1.3/0/2,-1.5/-4/$d-1$,1.5/-4/$d-1$,-2.5/0/$d$,2.5/0/$d$}{
\node[ver] () at (\x,\y){\tiny{\z}};}
\end{scope}

\end{tikzpicture}
\vspace{-9mm}
\caption{The standard $2(d+1)$-vertex crystallization $(\Gamma,\gamma)$ of $\mathbb{S}^{d-1}\times \mathbb{S}^1$ (resp., $\mathbb{S}^{\hspace{.2mm}d-1} \mbox{$\times\hspace{-2.8mm}_{-}$} \, \mathbb{S}^{\hspace{.1mm}1}$) if $d$ is even (resp., odd), and  an isomorphism $I(\Gamma):\Gamma \to \Gamma$ such that $I_V(x^{(p)})=x^{(p+2)}$ (addition is modulo $2d+2$) for $p \in \{0,1,\dots,2d+1\}$ and $I_c(i)=i+1$ (addition is modulo $d+1$) for $i\in \Delta_d$.}\label{fig:SdS1:2}
\end{figure}

Now, from Remark \ref{remark:component}, if $j$ and $k$ are not consecutive colors then $\bar {\Gamma}_{\{j,k\}}$ is of type $2(d+1)C_4 \sqcup \Gamma_{\{j_1,k_1\}}\sqcup \cdots \sqcup \Gamma_{\{j_{d-2},k_{d-2}\}}$. Since $g_{\{j,k\}}=d-1$ for all $j,k$, $\bar {g}_{\{j,k\}}=2(d+1)+(d-2)(d-1)$. Therefore, we can choose a permutation $\varepsilon=(\varepsilon_0,\dots,\varepsilon_{d+1})$ such that $\bar {g}_{\{\varepsilon_{i},\varepsilon_{i+1}\}}=2(d+1)+(d-2)(d-1)$ for $0 \leq i \leq d+1$ (here the addition in the subscript of $\varepsilon$ is modulo $d+2$). Therefore, $\chi_{\varepsilon}(\bar {\Gamma})= \sum_{i \in \mathbb{Z}_{d+2}}\bar {g}_{\{\varepsilon_i,\varepsilon_{i+1}\}} -d \frac{2(d+2)(d+1)}{2}=(d+2)(2(d+1)+(d-2)(d-1))-d(d+2)(d+1)=(d+2)(2d+2+d^2-3d+2-d^2-d)=-2(d+2)(d-2)$. Thus, $\rho_{\varepsilon}(\bar {\Gamma})=1-\chi_{\varepsilon}(\bar {\Gamma})/2=1+(d^2-4)=d^2-3$. Therefore, $\mathcal{G}(M_f) \leq d^2-3$. Since $\bar \Gamma$ has $2(d+1)(d+2)$ vertices,  $\mathit{k}(M_f) \leq d^2+3d+1$. 

For $d\geq 3$, $\pi_1(M)=\mathbb{Z}$ and $\bar g_{\Delta_{d+1}\setminus \{i,j\}}=3$ for all $i,j \in \Delta_{d+1}$. Now, if the components of $\bar \Gamma_{\Delta_{d+1}\setminus\{0,1\}}$ represent generators and components of $\bar \Gamma_{\{0,1\}}$ represent relations of the fundamental group of $\pi_1(M_f)$ then, by Proposition \ref{prop:gagliardi79b}, it is easy to prove that $\pi_1(M_f)=\mathbb{Z}\times \mathbb{Z}$.

If $d=3$ then, by Lemma \ref{lemma:3}, we have $\mathcal{G}(M_f) \leq 6$. Now, from Proposition \ref{prop:lowerbound}, we have $ \mathcal G(M_f) \geq  5 rk(\pi_1(M_f))-4=5\cdot 2 -4 =6$. Therefore $\mathcal{G}(M_f) =6$. These prove the results. \hfill $\Box$

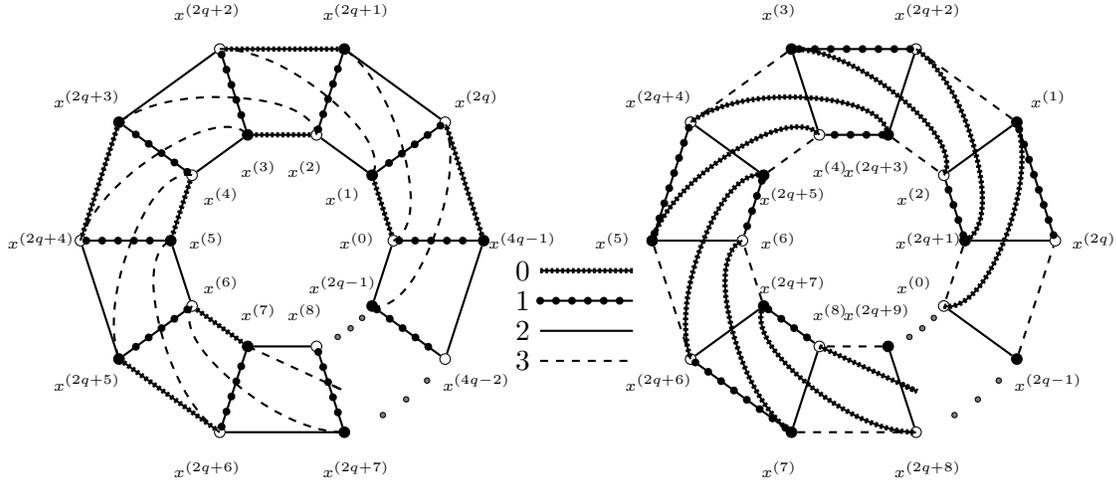
\begin{figure}[ht]
\tikzstyle{vert}=[circle, draw, fill=black!100, inner sep=0pt, minimum width=4pt]
\tikzstyle{vertex}=[circle, draw, fill=black!00, inner sep=0pt, minimum width=4pt]
\tikzstyle{ver}=[]
\tikzstyle{extra}=[circle, draw, fill=black!50, inner sep=0pt, minimum width=2pt]
\tikzstyle{edge} = [draw,thick,-]
\centering
\begin{tikzpicture}[scale=0.4]

\begin{scope}[shift={(-10,0)}]
\foreach \x/\y/\z in {0/$x^{(0)}$/b0,72/$x^{(2)}$/b2,144/$x^{(4)}$/b4,216/$x^{(6)}$/b6,288/$x^{(8)}$/b8}{
\node[ver] () at (\x:2.5){\tiny{\y}};
\node[vertex] (\y) at (\x:3.7){};
\node[ver] (\z) at (\x:5){};
} 

\foreach \x/\y/\z in
{36/$x^{(1)}$/b1,108/$x^{(3)}$/b3,180/$x^{(5)}$/b5,252/$x^{(7)}$/b7,324/$x^{(2q-1)}$/b2q}{ 
\node[ver]() at (\x:2.5){\tiny{\y}};
\node[vert] (\y) at (\x:3.7){};
\node[ver] (\z) at (\x:5){};
} 

\foreach \x/\y in {0/$x^{(4q-1)}$,72/$x^{(2q+1)}$,144/$x^{(2q+3)}$,216/$x^{(2q+5)}$,288/$x^{(2q+7)}$}{
\node[ver] () at (\x:8){\tiny{\y}};
\node[vert] (\y) at (\x:6.7){};} 

\foreach \x/\y in
{36/$x^{(2q)}$,108/$x^{(2q+2)}$,180/$x^{(2q+4)}$,252/$x^{(2q+6)}$,324/$x^{(4q-2)}$}{ 
\node[ver]() at (\x:8){\tiny{\y}};
\node[vertex] (\y) at (\x:6.7){};}

\foreach \x/\y in {308/101,300/102,316/103}{
\node[extra] (\y) at (\x:3.7){};
}
\foreach \x/\y in {308/101,300/102,316/103}{
\node[extra] (\y) at (\x:6.7){};
}
\end{scope}
\foreach \x/\y in
{$x^{(0)}$/$x^{(2q-1)}$,$x^{(0)}$/$x^{(1)}$,$x^{(2)}$/$x^{(1)}$,$x^{(2)}$/$x^{(3)}$,$x^{(4)}$/$x^{(3)}$,$x^{(4)}$/$x^{(5)}$,$x^{(6)}$/$x^{(5)}$,$x^{(6)}$/$x^{(7)}$,$x^{(8)}$/$x^{(7)}$,$x^{(4q-2)}$/$x^{(4q-1)}$,$x^{(4q-1)}$/$x^{(2q)}$,$x^{(2q)}$/$x^{(2q+1)}$,$x^{(2q+1)}$/$x^{(2q+2)}$,$x^{(2q+2)}$/$x^{(2q+3)}$,$x^{(2q+3)}$/$x^{(2q+4)}$,$x^{(2q+4)}$/$x^{(2q+5)}$,$x^{(2q+5)}$/$x^{(2q+6)}$,$x^{(2q+6)}$/$x^{(2q+7)}$}{
\path[edge] (\x) -- (\y);}

\foreach \x/\y in
{$x^{(0)}$/$x^{(1)}$,$x^{(2)}$/$x^{(3)}$,$x^{(4)}$/$x^{(5)}$,$x^{(6)}$/$x^{(7)}$,$x^{(4q-1)}$/$x^{(2q)}$,$x^{(2q+1)}$/$x^{(2q+2)}$,$x^{(2q+3)}$/$x^{(2q+4)}$,$x^{(2q+5)}$/$x^{(2q+6)}$}{
\draw [line width=2pt, line cap=rectengle, dash pattern=on 1pt off 1]  (\x) -- (\y);}

\foreach \x/\y in
{$x^{(0)}$/$x^{(4q-1)}$,$x^{(2q)}$/$x^{(1)}$,$x^{(2)}$/$x^{(2q+1)}$,$x^{(2q+2)}$/$x^{(3)}$,$x^{(4)}$/$x^{(2q+3)}$,$x^{(2q+4)}$/$x^{(5)}$,$x^{(6)}$/$x^{(2q+5)}$,$x^{(2q+6)}$/$x^{(7)}$,$x^{(8)}$/$x^{(2q+7)}$,$x^{(2q-1)}$/$x^{(4q-2)}$}{
\path[edge] (\x) -- (\y);
\draw[line width=3pt, line cap=round, dash pattern=on 0pt off 2\pgflinewidth] (\x) -- (\y);}

\foreach \x/\y/\z in
{$x^{(2q-1)}$/$x^{(2q)}$/b0,$x^{(0)}$/$x^{(2q+1)}$/b1,$x^{(2q+2)}$/$x^{(1)}$/b2,$x^{(2)}$/$x^{(2q+3)}$/b3,$x^{(2q+4)}$/$x^{(3)}$/b4,$x^{(4)}$/$x^{(2q+5)}$/b5,$x^{(2q+6)}$/$x^{(5)}$/b6,$x^{(6)}$/$x^{(2q+7)}$/b7,$x^{(7)}$/b8/b8}{
\draw[edge, dashed] plot [smooth,tension=1.5]coordinates{(\x)(\z)(\y)};}

\begin{scope}[shift={(9,0)}]
\foreach \x/\y/\z in {0/$x^{(2q+1)}$/b0,72/$x^{(2q+3)}$/b2,144/$x^{(2q+5)}$/b4,216/$x^{(2q+7)}$/b6,288/$x^{(2q+9)}$/b8}{
\node[ver] () at (\x:2.5){\tiny{\y}};
\node[vert] (\y) at (\x:3.7){};
\node[ver] (\z) at (\x:5){};
} 

\foreach \x/\y/\z in
{36/$x^{(2)}$/b1,108/$x^{(4)}$/b3,180/$x^{(6)}$/b5,252/$x^{(8)}$/b7,324/$x^{(0)}$/b2q}{ 
\node[ver]() at (\x:2.5){\tiny{\y}};
\node[vertex] (\y) at (\x:3.7){};
\node[ver] (\z) at (\x:5){};
} 

\foreach \x/\y in {0/$x^{(2q)}$,72/$x^{(2q+2)}$,144/$x^{(2q+4)}$,216/$x^{(2q+6)}$,288/$x^{(2q+8)}$}{
\node[ver] () at (\x:8){\tiny{\y}};
\node[vertex] (\y) at (\x:6.7){};} 

\foreach \x/\y in
{36/$x^{(1)}$,108/$x^{(3)}$,180/$x^{(5)}$,252/$x^{(7)}$,324/$x^{(2q-1)}$}{ 
\node[ver]() at (\x:8){\tiny{\y}};
\node[vert] (\y) at (\x:6.7){};}

\foreach \x/\y in {308/101,300/102,316/103}{
\node[extra] (\y) at (\x:3.7){};
}
\foreach \x/\y in {308/101,300/102,316/103}{
\node[extra] (\y) at (\x:6.7){};
}
\end{scope}
\foreach \x/\y in
{$x^{(2q+1)}$/$x^{(2)}$,$x^{(2q+3)}$/$x^{(4)}$,$x^{(2q+5)}$/$x^{(6)}$,$x^{(2q+7)}$/$x^{(8)}$,$x^{(2q)}$/$x^{(1)}$,$x^{(2q+2)}$/$x^{(3)}$,$x^{(2q+4)}$/$x^{(5)}$,$x^{(2q+6)}$/$x^{(7)}$}{
\path[edge] (\x) -- (\y);}
\foreach \x/\y in
{$x^{(2q+1)}$/$x^{(0)}$,$x^{(2q+3)}$/$x^{(2)}$,$x^{(2q+5)}$/$x^{(4)}$,$x^{(2q+7)}$/$x^{(6)}$,$x^{(2q+9)}$/$x^{(8)}$,$x^{(2q-1)}$/$x^{(2q)}$,$x^{(1)}$/$x^{(2q+2)}$,$x^{(3)}$/$x^{(2q+4)}$,$x^{(5)}$/$x^{(2q+6)}$,$x^{(7)}$/$x^{(2q+8)}$}{
\path[edge,dashed] (\x) -- (\y);}

\foreach \x/\y in
{$x^{(2q+1)}$/$x^{(2)}$,$x^{(2q+3)}$/$x^{(4)}$,$x^{(2q+5)}$/$x^{(6)}$,$x^{(2q+7)}$/$x^{(8)}$,$x^{(2q)}$/$x^{(1)}$,$x^{(2q+2)}$/$x^{(3)}$,$x^{(2q+4)}$/$x^{(5)}$,$x^{(2q+6)}$/$x^{(7)}$}{\draw[line width=3pt, line cap=round, dash pattern=on 0pt off 2\pgflinewidth] (\x) -- (\y);}

\foreach \x/\y in
{$x^{(2q+1)}$/$x^{(2q)}$,$x^{(1)}$/$x^{(2)}$,$x^{(2q+3)}$/$x^{(2q+2)}$,$x^{(3)}$/$x^{(4)}$,$x^{(2q+5)}$/$x^{(2q+4)}$,$x^{(5)}$/$x^{(6)}$,$x^{(2q+7)}$/$x^{(2q+6)}$,$x^{(7)}$/$x^{(8)}$,$x^{(2q+9)}$/$x^{(2q+8)}$,$x^{(0)}$/$x^{(2q-1)}$}{
\path[edge] (\x) -- (\y);}

\foreach \x/\y/\z in
{$x^{(0)}$/$x^{(1)}$/b0,$x^{(2q+1)}$/$x^{(2q+2)}$/b1,$x^{(3)}$/$x^{(2)}$/b2,$x^{(2q+3)}$/$x^{(2q+4)}$/b3,$x^{(5)}$/$x^{(4)}$/b4,$x^{(2q+5)}$/$x^{(2q+6)}$/b5,$x^{(7)}$/$x^{(6)}$/b6,$x^{(2q+7)}$/$x^{(2q+8)}$/b7,$x^{(8)}$/b8/b8}{
\draw[line width=2pt, line cap=rectengle, dash pattern=on 1pt off 1] plot [smooth,tension=1.5]coordinates{(\x)(\z)(\y)};
\draw[edge] plot [smooth,tension=1.5]coordinates{(\x)(\z)(\y)};}

 \begin{scope}[shift={(-1,-5)}]
\node[ver] (308) at (-1,4){$0$};
\node[ver] (300) at (-1,3){$1$};
\node[ver] (301) at (-1,2){$2$};
\node[ver] (302) at (-1,1){$3$};
\node[ver] (309) at (3,4){};
\node[ver] (304) at (3,3){};
\node[ver] (305) at (3,2){};
\node[ver] (306) at (3,1){};
\path[edge] (300) -- (304);
\path[edge] (308) -- (309);
\draw [line width=2pt, line cap=rectengle, dash pattern=on 1pt off 1]  (308) -- (309);
\draw [line width=3pt, line cap=round, dash pattern=on 0pt off 2\pgflinewidth]  (300) -- (304);
\path[edge] (301) -- (305);
\path[edge, dashed] (302) -- (306);
\end{scope}
\end{tikzpicture}
\caption{The standard $4q$-vertex crystallization of $L(q,1)$, and an isomorphism from the crystallization to itself.}\label{fig:Lq1}
\end{figure}

\noindent {\em Proof of Theorem} \ref{theorem:genus2}.
Let $(\Gamma,\gamma)$ be the standard $4q$-vertex crystallization of $L(q,1)$ and $I(\Gamma):=(I_V,I_c):\Gamma \to \Gamma$ be the corresponding isomorphism  as in Figure \ref{fig:Lq1}, i.e., $I_V(x^{(0)})=x^{(2q+1)}$, $I_V(x^{(1)})=x^{(2)}$, $I_V(x^{(2)})=x^{(2q+3)}$, and so on. If we choose $\tilde I(\Gamma)= I''(\Gamma) =I(\Gamma)$ as in Lemma \ref{lemma:2}, then there is a crystallization $(\bar \Gamma,\bar \gamma)$ of an orientable mapping torus $L(q,1)_f$ of a (PL) homeomorphism $f:L(q,1) \to L(q,1)$. In this case $g_{\{0,2\}}=2$ and $p=2q$. Thus, by Lemma \ref{lemma:3}, $\mathcal{G}(L(q,1)_f) \leq 1 + 5(p-g_{\{0,2\}})/2=1 + 5(2q-2)/2=5q-4$. Since $\bar \Gamma$ has $20q$ vertices, $\mathit{k}(L(q,1)_f) \leq 10q-1$.

Now, for $q=2$, $\bar g_{\{2,3,4\}}=3$. If the components of $\bar \Gamma_{\{2,3,4\}}$  represent the generators and the components of $\bar \Gamma_{\{0,1\}}$ represent the relations of $\pi_1(L(2,1)_f)$ then, by Proposition \ref{prop:gagliardi79b}, it is easy to prove that $\pi_1(L(2,1)_f)=\mathbb{Z}_2\times\mathbb{Z}$. Therefore, from Proposition \ref{prop:lowerbound}, we have $ \mathcal G(L(2,1)_f) \geq  5 rk(\pi_1(L(2,1)_f))-4=5\cdot 2 -4 =6$. Thus, from the first part, $\mathcal{G}(L(2,1)_f) =6$.

If $q=2$ then we have another isomorphism $I'(\Gamma):=(I'_V,I'_c):\Gamma \to \Gamma$
such that $I'_V$ send vertices $x^{(0)}, x^{(1)}, x^{(2)},
x^{(3)}, x^{(4)}, x^{(5)},x^{(6)},x^{(7)}$ to $x^{(2)}, x^{(5)}, x^{(0)},
x^{(7)},x^{(4)},x^{(3)},x^{(6)}$, $x^{(1)}$ respectively. Thus, by Lemma \ref{lemma:2}, there is a crystallization $(\bar \Gamma,\bar \gamma)$ of a non-orientable mapping torus $L(2,1)_{\tilde f}$ of a (PL) homeomorphism $\tilde f:L(2,1) \to L(2,1)$. In this case $g_{\{0,2\}}=2$ and $p=4$. Thus, by Lemma \ref{lemma:3}, $\mathcal{G}(L(2,1)_{\tilde f}) \leq 1 + 5(p-g_{\{0,2\}})/2=1 + 5(4-2)/2=6$. In this case, $\bar g_{\{2,3,4\}}=3$. If the components of $\bar \Gamma_{\{2,3,4\}}$  represent the generators and the components of $\bar \Gamma_{\{0,1\}}$ represent the relations of $\pi_1(L(2,1)_{\tilde f})$ then, by Proposition \ref{prop:gagliardi79b}, it is easy to prove that $\pi_1(L(2,1)_{\tilde f})=\mathbb{Z}_2\times\mathbb{Z}$. Therefore, from Proposition \ref{prop:lowerbound}, we have $ \mathcal G(L(2,1)_{\tilde f}) \geq  5 rk(\pi_1(L(2,1)_{\tilde f}))-4=5\cdot 2 -4 =6$.  Thus, we have, $\mathcal{G}(L(2,1)_{\tilde f}) =6$. These prove the results. \hfill $\Box$

\begin{example}\label{eg:RPd}
{\rm From \cite[Section 9]{ga79b}, we know the following construction of a crystallization of $\mathbb{RP}^d$ for $d\geq 2$: consider the set of the vertices  $\{(x_0,\dots,x_{d-1}) \in \mathbb{R}^d: x_i\in \mathbb{Z}_2 ~ \forall ~ 0\leq i \leq d-1\}$ of the unit cube in $\mathbb{R}^d$. Now, one can construct a $2^d$-vertex crystallization $(\Gamma,\gamma)$ of $\mathbb{RP}^d$ with the color set $\Delta_d$ by the following way.

\noindent (1) Join $x=(x_0,\dots,x_{d-1})$ and $y=(y_0,\dots,y_{d-1})$ with an edge of color $j$ if and only if $x_j \neq y_j$ and $x_0=y_0,\dots,x_{j-1}=y_{j-1},x_{j+1}=y_{j+1},\dots,x_{d-1}=y_{d-1}$.

\noindent (2) Join $x=(x_0,\dots,x_{d-1})$ and $y=(y_0,\dots,y_{d-1})$ with an edge of color $d$ if and only if $x_i \neq y_i$ for $ 0\leq i \leq d-1$.

}
\end{example}

\noindent {\em Proof of Theorem}  \ref{theorem:genus3}.
Let $(\Gamma,\gamma)$ be a crystallization of $\mathbb{S}^1 \times \mathbb{S}^1$ and $I(\Gamma):=(I_V,I_c):\Gamma \to \Gamma$ be an isomorphism as in Example \ref{example:surface}. In Example \ref{example:surface}, we have obtained a crystallization $(\bar \Gamma,\bar \gamma)$ of a mapping torus which is $T_1 \times \mathbb{S}^1$. Now, by Remark \ref{remark:isomorphism} and by the symmetry of the crystallization $\bar \Gamma$, there exist isomorphisms from $\bar \Gamma$ to itself which satisfy all the properties as in Lemma \ref{lemma:2}. Therefore, by Lemma \ref{lemma:2}, we have a crystallization of an orientable (resp., a non-orientable) mapping torus $(T_1 \times \mathbb{S}^1)_f$ (resp., $(T_1 \times \mathbb{S}^1)_{\tilde f}$) of a (PL) homeomorphism $f:T_1 \times \mathbb{S}^1 \to T_1 \times \mathbb{S}^1$ (resp., $\tilde f:T_1 \times \mathbb{S}^1 \to T_1 \times \mathbb{S}^1$). Now, from Remark \ref{remark:component}, we have $\bar \Gamma_{\{0,2\}} \cong 6 C_4$, i.e.,  $\bar g_{\{0,2\}} = 6$. If $\bar \Gamma$ is a $2\bar p$-vertex crystallization then $\bar p=12$. Let $M$ be the mapping torus $(T_1 \times \mathbb{S}^1)_f$ or $(T_1 \times \mathbb{S}^1)_{\tilde f}$. Then, by Lemma \ref{lemma:3}, we have  $\mathcal{G}(M) \leq 1+5(\bar p-\bar g_{\{0,2\}})/2=1+5\cdot 3=16$. Since the number of vertices of the crystallizations of the mapping tori is $5\cdot 4\cdot 6=120$, $\mathit{k}(M) \leq 59$. These prove part $(i)$.

From the $2^d$-vertex crystallization $(\Gamma, \gamma)$ of $\mathbb{RP}^d$ as in Example \ref{eg:RPd}, it is clear that there exist isomorphisms $I(\Gamma):=(I_V,I_c):\Gamma \to \Gamma$ which satisfy all the properties as in Lemma \ref{lemma:2}.
Since $d$ is odd, $\mathbb{RP}^d$ is orientable.
Therefore, by Lemma \ref{lemma:2}, we have a crystallization of an orientable (resp., a non-orientable) mapping torus $(\mathbb{RP}^d)_f$ (resp., $(\mathbb{RP}^d)_{\tilde f}$) of a (PL) homeomorphism $f:\mathbb{RP}^d \to \mathbb{RP}^d$ (resp., $\tilde f:\mathbb{RP}^d \to \mathbb{RP}^d$) with $(d+2)2^d$ vertices.
Since $\Gamma_{\{i,j\}}=2^{d-2}C_4$ for all $0\leq i \leq j \leq d$, from Remark \ref{remark:component}, it is clear that, if $j$ and $k$ are not consecutive colors then $\bar \Gamma_{\{j,k\}}$ is of type $(d+2)2^{d-2}C_4$. Therefore, we can choose a permutation $\varepsilon=(\varepsilon_0,\dots,\varepsilon_{d+1})$ such that $\bar g_{\{\varepsilon_{i},\varepsilon_{i+1}\}}=(d+2)2^{d-2}$ (the addition in the subscript of $\varepsilon$ is modulo $d+2$) for $0 \leq i \leq d+1$. Therefore, $\chi_{\varepsilon}(\bar \Gamma)= \sum_{i \in \mathbb{Z}_{d+2}}\bar g_{\{\varepsilon_i,\varepsilon_{i+1}\}} -d \frac{(d+2)2^d}{2}=(d+2)^2 2^{d-2}-d(d+2)2^{d-1}=(d+2)2^{d-2}(d+2-2d)=2^{d-2}(4-d^2)$. Thus, $\rho_{\varepsilon}(\bar \Gamma)=1-\chi_{\varepsilon}(\bar \Gamma)/2=1+ 2^{d-3}(d^2-4)$. Now, part (ii) follows from these.
\hfill $\Box$

\medskip
\medskip

\noindent {\em Proof of Theorem}  \ref{theorem:genus4}.
For $1\leq h \leq 2$, let $(\Gamma,\gamma)$ be a crystallization of $U_h$ and $I(\Gamma):=(I_V,I_c):\Gamma \to \Gamma$ be an isomorphism as in Example \ref{example:surface}. In Example \ref{example:surface}, we have obtained a crystallization $(\bar \Gamma,\bar \gamma)$ of a mapping torus which is $U_h \times \mathbb{S}^1$. Now, by Remark \ref{remark:isomorphism}, there exist isomorphisms from $\bar \Gamma$ to itself which satisfy all the properties as in Lemma \ref{lemma:2}. Therefore, by Lemma \ref{lemma:2}, we have a crystallization of  a non-orientable mapping torus  $(U_h\times \mathbb{S}^1)_{\tilde f}$ of a (PL) homeomorphism $\tilde f:U_h\times \mathbb{S}^1 \to U_h\times \mathbb{S}^1$. Now, the number of vertices of $\Gamma$ is $2p$, where $p=2$ (resp., $p=3$) for $h=1$ (resp., $h=2$). Thus, the number of vertices of the crystallization of the mapping torus is $5\cdot4\cdot 2p=40p$. Now, from Remark \ref{remark:component}, we have $\bar \Gamma_{\{0,2\}} \cong 2p C_4$, i.e.,  $\bar g_{\{0,2\}} = 2p$. Again we have, if $\bar \Gamma$ is a $2\bar p$-vertex crystallization then $\bar p=4p$. Now, by Lemma \ref{lemma:3}, we have  $\mathcal{G}((U_h\times \mathbb{S}^1)_{\tilde f}) \leq 1+5(\bar p-\bar g_{\{0,2\}})/2=1+5p$. Here $p=h+1$ as $p=2$ (resp., $p=3$) for $h=1$ (resp., $h=2$). Therefore, $\mathcal G(U_h\times \mathbb{S}^1)_{\tilde f})\leq 1+5p=5h+6$ and  $\mathit{k}(U_h\times \mathbb{S}^1)_{\tilde f})\leq 20p-1=20h+19$. These prove part $(i)$.

Since $d$ is even, $\mathbb{RP}^d$ is non-orientable. Therefore, we have only non-orientable mapping tori. Now, the proof of part $(ii)$ follows by the similar arguments as in the proof of part $(ii)$ of Theorem \ref{theorem:genus3}. \hfill $\Box$

\bigskip

\noindent {\bf Acknowledgement:} The author would like to thank Basudeb Datta for useful suggestions. The author is also thankful to the anonymous referees for many useful comments and suggestions. In particular, Conjecture \ref{conjecture:true} is due to one of them. The author is supported by DST INSPIRE Research Grant (DST/INSPIRE/04/2017/002471).

{\footnotesize

}
\end{document}